\newenvironment{rezabib} 
{\bibdiv\biblist\setupbib} 
{\endbiblist\endbibdiv} 
\def\setupbib{\catcode`@=\active} 
\def\gatherkey#1#2{\gatherkeyaux{#1}#2\gatherkeyaux} 
\def\gatherkeyaux#1#2,#3\gatherkeyaux{\bib{#2}{#1}{#3}} 
\DeclareFontFamily{U}{wncy}{}
\DeclareFontShape{U}{wncy}{m}{n}{<->wncyr10}{}
\DeclareSymbolFont{yhlargesymbols}{OMX}{yhex}{m}{n} 
\DeclareMathAccent{\yhwidehat}{\mathord}{yhlargesymbols}{"62}
\newtheorem*{theorem*}{Theorem}
\newtheorem{theorem}{Theorem}[section]
\newtheorem{lemma}[theorem]{Lemma}
\newtheorem{proposition}[theorem]{Proposition}
\newtheorem{corollary}[theorem]{Corollary}
\theoremstyle{definition}
\newtheorem{definition}[theorem]{Definition}
\newtheorem{notes}[theorem]{Remarks}
\newtheorem{remark}[theorem]{Remark}
\numberwithin{equation}{section}
\DeclareMathOperator{\Po}{Po}
\DeclareMathOperator{\Cl}{Cl}
\DeclareMathOperator{\Gal}{Gal}
\DeclareMathOperator{\Image}{Image}
\begin{document}


\title[Number Fields With Large P\'olya Groups]{Number Fields With Large P\'olya Groups} 

\author{Amir Akbary}
\address{Department of Mathematics and Computer Science, University of Lethbridge, Lethbridge, Alberta T1K 3M4, Canada}
\email{amir.akbary@uleth.ca}

\author{Abbas Maarefparvar} 
\address{Department of Mathematics and Computer Science, University of Lethbridge, Lethbridge, Alberta T1K 3M4, Canada} 
\email{abbas.maarefparvar@uleth.ca}

\subjclass[2020]{11R11, 11R29, 11R37} 
\keywords{P\'{o}lya groups, genus fields, one class in each genus, quadratic fields of R-D types}

\thanks{Research of both authors is supported by NSERC. The second was supported by a PIMS postdoctoral fellowship and the University of Lethbridge during this research.}

\date{\today}

\begin{abstract} 
The P\'olya group $\Po(K)$ of a number field $K$ is the subgroup of the ideal class group $\Cl(K)$ of $K$ generated by the classes of all the products of the prime ideals of $K$ with the same norm.
Motivated by the classical ``one class in each genus problem'',
we prove general finiteness theorems for the number fields $K$ with a fixed P\'olya index $\left[\Cl(K):\Po(K)\right]$ in the families of Galois number fields, solvable CM-fields, and real quadratic fields of extended R-D type. We also give classification results for specific families. Most notably, we classify, unconditionally, all imaginary bi-quadratic and imaginary tri-quadratic fields with the P\'olya index one. Furthermore, 
we classify all real quadratic fields of extended R-D type (with possibly only one more field) with the P\'olya index one.
Also, under GRH, we give the complete list of 161 imaginary quadratic fields with the P\'olya index two. Finally, as a byproduct of our results, we extend, from narrow R-D types to the extended R-D types, Dohmae's classification of real quadratic fields of narrow R-D type whose narrow genus numbers equal their narrow class numbers.
\end{abstract} 
\maketitle

\noindent

\section{Introduction}

Let $\Cl(\Delta)$ denote the collection of equivalence classes of primitive binary quadratic forms of discriminant $\Delta$ and $h(\Delta)=\#\Cl(\Delta)$ be its corresponding class number. In Article 303 of \textit{Disquisitiones arithmeticae} \cite{Gauss}, Gauss 
conjectured that
for every positive integer $t$, there are only finitely many negative discriminants $\Delta$ such that $h(\Delta)=t$.
Heilbronn  \cite{Heilbronn} proved this conjecture. In other words, he showed that
\begin{equation*}
h(\Delta) \rightarrow \infty,  \quad \text{as} \quad -\Delta \rightarrow \infty.
\end{equation*}

Modifying Heilbronn's method, Chowla \cite{Chowla} proved the following result.

\begin{theorem}[Chowla] \label{theorem, Chowla}
Denote by $\omega(\Delta)$ the number of distinct prime factors of  discriminant $\Delta<0$. Then
\begin{equation*}
\frac{	h(\Delta)}{2^{\omega(\Delta)}} \rightarrow \infty,  \quad \text{as} \quad -\Delta \rightarrow \infty.
\end{equation*}
\end{theorem}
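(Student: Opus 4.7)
The plan is to deduce Chowla's refinement from the analytic class number formula, combined with Heilbronn's dichotomy on real zeros of Dirichlet $L$-functions and the standard divisor bound on $\omega$.

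By the analytic class number formula, for every fundamental discriminant $\Delta < -4$ one has
$$h(\Delta) = \frac{\sqrt{|\Delta|}}{\pi}\, L(1,\chi_\Delta),$$
where $\chi_\Delta$ is the Kronecker symbol of conductor $|\Delta|$. Since $2^{\omega(\Delta)} \le d(|\Delta|) \ll_{\varepsilon} |\Delta|^{\varepsilon}$ for every $\varepsilon>0$, the theorem will follow at once if I can establish $L(1,\chi_\Delta) \gg 1/\log|\Delta|$ for all but finitely many negative fundamental $\Delta$: that bound alone yields $h(\Delta)/2^{\omega(\Delta)} \gg |\Delta|^{1/2-\varepsilon}/\log|\Delta| \to \infty$.

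To obtain the lower bound on $L(1,\chi_\Delta)$, I would replicate Heilbronn's dichotomy. In the first case, assume no real primitive quadratic character has an exceptional zero in the Landau region $[1 - c/\log|\Delta|,\, 1)$; then a standard contour integration against $L'/L$ gives $L(1,\chi_\Delta) \gg 1/\log|\Delta|$ unconditionally. In the second case, some fixed primitive character $\chi_{\Delta_0}$ has a Siegel-type real zero $\beta_0$ close to $1$. Here one considers the Dedekind zeta function of the biquadratic field $K_\Delta = \mathbb{Q}(\sqrt{\Delta_0},\sqrt{\Delta})$, namely
$$\zeta_{K_\Delta}(s) = \zeta(s)\, L(s,\chi_{\Delta_0})\, L(s,\chi_\Delta)\, L(s,\chi_{\Delta_0\Delta}),$$
whose Dirichlet coefficients are non-negative. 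The Deuring--Heilbronn repulsion phenomenon, applied to this product, forces $L(s,\chi_\Delta)$ to be zero-free in a corresponding region near $s=1$, and so once again $L(1,\chi_\Delta) \gg 1/\log|\Delta|$ for all $\Delta$ outside a finite set determined by $\Delta_0$.

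Combining the two cases with the class number formula, the growth of $\sqrt{|\Delta|}$ dominates both the logarithmic loss in $L(1,\chi_\Delta)$ and the $|\Delta|^{\varepsilon}$ bound on $2^{\omega(\Delta)}$, so the quotient $h(\Delta)/2^{\omega(\Delta)}$ tends to infinity as $-\Delta\to\infty$. The main technical obstacle is the Siegel-zero branch of the dichotomy; its handling via Deuring--Heilbronn repulsion is ineffective, so one does not obtain an explicit bound on the finite set of possible exceptions, but this is acceptable since the theorem is purely qualitative.
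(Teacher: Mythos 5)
The paper does not prove this theorem; it is quoted from Chowla's 1934 paper as a known classical result, so there is no internal proof to compare against. Judged on its own terms, your overall architecture (class number formula, a lower bound for $L(1,\chi_\Delta)$, and the divisor bound $2^{\omega(\Delta)}\ll_\varepsilon|\Delta|^\varepsilon$) is the standard modern route and would work --- but the key intermediate bound you assert is not one you can actually obtain.

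The gap is in your Siegel-zero branch. You claim that fixing one character $\chi_{\Delta_0}$ with a real zero $\beta_0$ close to $1$ and applying Deuring--Heilbronn repulsion yields $L(1,\chi_\Delta)\gg 1/\log|\Delta|$ for all $\Delta$ outside a finite set. That conclusion is equivalent to saying only finitely many real primitive characters have exceptional zeros, which is far beyond what is known. The repulsion mechanism does not deliver it: the repelled region has the shape
\begin{equation*}
\sigma \;\ge\; 1-\frac{c\,\log\bigl(c/((1-\beta_0)\log(|\Delta_0\Delta|))\bigr)}{\log(|\Delta_0\Delta|)},
\end{equation*}
which is nonempty only while $(1-\beta_0)\log|\Delta|$ stays below an absolute constant; for a \emph{fixed} zero $\beta_0<1$ and $|\Delta|\to\infty$ it degenerates and gives nothing. (Landau's actual conclusion here is only that exceptional moduli are sparse, $q_{j+1}>q_j^2$, not finite in number.) The correct repair is the Heilbronn--Landau/Siegel argument: from $F(s)=\zeta(s)L(s,\chi_{\Delta_0})L(s,\chi_\Delta)L(s,\chi_{\Delta_0\Delta})$ having non-negative coefficients and $F(\beta_0)=0$, Landau's lemma gives $L(1,\chi_\Delta)\gg(1-\beta_0)\,|\Delta|^{-c(1-\beta_0)}$, whence (choosing the dichotomy with a parameter $\varepsilon$ rather than a $\Delta$-dependent Landau region, which also fixes the ill-posed ``either/or'' in your Case 1) the ineffective bound $L(1,\chi_\Delta)\gg_\varepsilon|\Delta|^{-\varepsilon}$. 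That weaker bound still gives $h(\Delta)/2^{\omega(\Delta)}\gg_\varepsilon|\Delta|^{1/2-2\varepsilon}\to\infty$, so your plan is salvageable once the false $1/\log|\Delta|$ claim is replaced by Siegel's $|\Delta|^{-\varepsilon}$. A minor further point: the theorem concerns arbitrary negative discriminants, so you should either reduce $h(\Delta f^2)$ to the fundamental case by the usual conductor formula or note that the same estimates apply to the associated non-primitive $L$-value.
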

Gauss showed that $\Cl(\Delta)$ with an appropriate composition law forms a group. Let the \emph{genus} $g(\Delta)$ be the cardinality of the 2-torsion part of $\Cl(\Delta)$, i.e., $g(\Delta)=\#\Cl(\Delta)[2]$, where
\begin{equation*}
\Cl(\Delta)[2]=\{q\in \Cl(\Delta);~q^2\sim 1_\Delta\},
\end{equation*}
 in which $1_\Delta$ denotes the identity of the group $\Cl(\Delta)$. Gauss proved that $g(\Delta)=2^{\omega(\Delta)+\epsilon(\Delta)}$, where $\epsilon(\Delta)\in \{0, -1, -2\}$ (see \cite[Section 2.2]{Kani} for the exact formulas for $\epsilon(\Delta)$.) Therefore, Chowla's theorem implies that there are only finitely many negative discriminants $\Delta$ for which   
$h(\Delta)=t g(\Delta)$ for any fixed integer $t$. This statement was already conjectured, and a complete list of 65 such even discriminants $\Delta$ with $g(\Delta)=h(\Delta)$ was proposed by Gauss in the same Article 303. Considering the odd negative discriminants $\Delta$, Dickson \cite[pp. 88-89]{Dickson} gave a list of 101 discriminants $\Delta$ for which $g(\Delta)=h(\Delta)$.

Equivalently, the above results can be formulated using the terminology of quadratic number fields. For an imaginary quadratic field $K=\mathbb{Q}(\sqrt{-D})$, where $-D$ is a square-free negative integer, let the genus number $g_K$ be the cardinality of the $2$-torsion subgroup of the ideal class group $\Cl(K)$ of $K$. By letting $h_K=\#\Cl(K)$, the Dickson's list \cite[pp. 88-89]{Dickson} corresponds to $65$ square-free negative integers $-D$, given in Table \ref{table, classification imagianry quadratic}, for which $g_K=h_K$.
This is classically known as the list of imaginary quadratic fields with ``\emph{one class in each genus}'' (for fascinating facts regarding the numbers in this list and their relations with the prime producing \emph{idoneal numbers} of Euler see the comprehensive survey  \cite{Kani}.) 
Although the completeness of this list still is an open problem, under an assumption on the zeros of the Dirichlet $L$-function $L(s, \chi_{-D})$, 
Weinberger \cite[Thoeorem 1]{Weinberger} proved that this list is complete.

\begin{theorem}[{Weinberger}] \label{theorem, Weinberger}
There is at most one imaginary quadratic field $K=\mathbb{Q}(\sqrt{-D})$ with $D>1365$ such that $g_K=h_K$. Moreover, if $L(s, \chi_{-D})\neq 0$ for $D>2\cdot 10^{11}$ and $1-1/4\log{D} \leq s<1$, then Tabel \ref{table, classification imagianry quadratic} gives the complete list of imaginary quadratic fields $K$ for which $g_K=h_K$. 
\end{theorem}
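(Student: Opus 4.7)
The plan is to produce an effective refinement of Chowla's Theorem~\ref{theorem, Chowla} by combining Gauss's genus formula, the analytic class number formula, and an effective lower bound on $L(1,\chi_{-D})$. For an imaginary quadratic field $K=\mathbb{Q}(\sqrt{-D})$ of discriminant $d_K$, genus theory gives $g_K=2^{\omega(d_K)-1}$, while the class number formula yields $h_K=(w_K\sqrt{|d_K|}/2\pi)\,L(1,\chi_{-D})$. Dividing, the ratio $h_K/g_K$ is, up to a bounded factor, $\sqrt{D}\,L(1,\chi_{-D})/2^{\omega(D)}$. Since $2^{\omega(D)}=O_\epsilon(D^\epsilon)$ for every $\epsilon>0$, any power-savings lower bound on $L(1,\chi_{-D})$ will force $h_K/g_K\to\infty$ and hence rule out the equality $g_K=h_K$ for sufficiently large $D$.

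The second step is to secure the effective bound $L(1,\chi_{-D})\gg 1/\log D$ under the hypothesis of no real zero of $L(s,\chi_{-D})$ in $[1-1/(4\log D),1)$. This conversion of a zero-free region into a quantitative lower bound at $s=1$ is achieved by the classical method of Landau (integrating $-L'/L$ against a suitable kernel), in a form essentially due to Tatuzawa. Plugging back, one gets $h_K/g_K\gg\sqrt{D}/(2^{\omega(D)}\log D)$, and careful bookkeeping of all numerical constants singles out the explicit threshold $D>2\cdot 10^{11}$ above which $h_K/g_K>1$ is forced, contradicting $g_K=h_K$. Combined with direct computation of $h_K$ and $g_K$ for $D\le 2\cdot 10^{11}$, this establishes the conditional second assertion of the theorem.

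For the unconditional first assertion, the main additional input is the Deuring--Heilbronn phenomenon: if any single primitive real character $\chi_{-D_0}$ happens to admit a real zero arbitrarily close to $s=1$, then for every other such character $\chi_{-D}$ the corresponding $L$-function possesses an effective zero-free region of the required quality, so the Step~2 argument applies to $\chi_{-D}$. Hence at most one $D>1365$ can possibly satisfy $g_K=h_K$; the explicit cutoff $1365$ emerges from making every constant in the effective chain above fully quantitative in the non-exceptional regime.

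The principal obstacle is the ineffectivity stemming from a potential Siegel zero: no effective method is known to exclude a real zero of $L(s,\chi_{-D})$ arbitrarily close to $s=1$ for any single $D$, and this is precisely what produces the hypothetical extra field in the first assertion and forces the explicit no-zero hypothesis in the second. Every other ingredient (genus theory, the class number formula, the method of converting a zero-free region into a lower bound on $L(1,\chi)$, and Deuring--Heilbronn) is effective and can be rendered with explicit constants, so the Siegel-zero issue is the only genuinely hard point of the argument.
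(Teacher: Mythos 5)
The paper does not actually prove this statement: it is quoted from Weinberger's article (\cite[Theorem 1]{Weinberger}) and used as a black box, so there is no internal proof to compare against. Measured against Weinberger's actual argument, your outline follows the correct route in its essentials: for imaginary quadratic $K$ one has $g_K=2^{\omega(d_K)-1}$, so $h_K=g_K$ together with the class number formula forces $L(1,\chi_{-D})\ll 2^{\omega(D)}D^{-1/2}\ll_\epsilon D^{\epsilon-1/2}$, and this is incompatible with a lower bound of the shape $c/\log D$ once $D$ exceeds an explicit threshold; the hypothesis that $L(s,\chi_{-D})$ has no real zero in $[1-1/(4\log D),1)$ yields exactly such a lower bound by Hecke's classical argument, which is where $2\cdot 10^{11}$ comes from.

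Two points in your sketch need repair. First, the finite range is not disposed of by ``direct computation of $h_K$ and $g_K$ for $D\le 2\cdot 10^{11}$'' --- that is infeasible (certainly in 1973). The step that makes the verification possible is the representation criterion: if $h_K=g_K$ then the class group has exponent $2$, so for any split prime $p$ the ideal $\mathfrak{p}^2$ is principal and generated by a non-rational element, whence $4p^2\ge D$ and $p\ge \sqrt{D}/2$. Thus every prime $p<\sqrt{D}/2$ with $p\nmid 2D$ must be inert, and exhibiting a single small split prime eliminates $D$; the search over the finite range is a fast sieve on this condition, not a class number computation. Omitting this criterion leaves the finite check as an unjustified (indeed impossible) step. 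Second, the unconditional ``at most one exception'' is standardly obtained not from a bare Deuring--Heilbronn repulsion statement (which, in Landau's pairing form, gives one exceptional modulus per range, with the exception potentially varying) but from Tatuzawa's uniform effective version of Siegel's theorem \cite{Tatuzawa}, $L(1,\chi_{-D})>0.655\,\epsilon D^{-\epsilon}$ for all but at most one $D$ --- the same input this paper invokes, via Mollin--Williams, in Lemma~\ref{lemma, lower bound RD}. With those two substitutions your sketch matches the known proof, and your diagnosis that the Siegel zero is the sole source of both the possible single exception and the zero-free-region hypothesis is correct.
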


\begin{table}[!h]
	\begin{center}
		\begin{tabular}{|c|p{10.1cm}|} 
			\hline
			$g_K=h_K$ & $D$   \\ [0.5ex] 
			\hline \hline
			$1$ & $1,2,3,7,11,19,43,67,163$ \\
			\hline 
			\multirow{2}{0.6em}{$2$} & $5,6,10,13,15,22,35,37,51,58,91,115,123,187,235,267,$ \\ 
			& $403,427$ \\
			\hline
			\multirow{2}{0.6em}{$4$} & $21,30,33,42,57,70,78,85,93,102,130,133,177,190,195,$ \\ 
			& $253,435,483,555,595,627,715,795,1435$ \\
			\hline
			\multirow{2}{0.6em}{$8$} & $105,165,210,273,330,345,357,385,462,1155,1995,3003,$ \\
			& $3315$ \\ \hline
			$16$ & $1365$ \\ \hline
		\end{tabular} \caption{The list of $65$  imaginary quadratic fields $K=\mathbb{Q}(\sqrt{-D})$ with $D \leq 1365$ for which $g_K=h_K$.
		} \label{table, classification imagianry quadratic}
	\end{center}
\end{table}

The genus theory of quadratic forms can be generalized in various ways to the general setting of number fields. A common generalization in the context of class field theory is given by Fr\"{o}hlich (see \cite{Ishida} for a detailed treatment.) 
  For a number field $K$, let $\Cl(K)$ (resp. $\Cl^+(K)$) be the class group (resp. narrow class group) of $K$, and let $h_K$ (resp. $h_K^+$) be the class number (resp. narrow class number) of $K$. Roughly speaking, the genus field (resp. narrow genus field) of $K$ is a specific abelian extension of $K$ whose Galois group is isomorphic to
 a quotient of $\Cl(K)$ (resp. a quotient of $\Cl^+(K)$). More formally, we have the following definition.

\begin{definition} 
\label{genus}
For a number field $K$, the \textit{genus field} $\Gamma_K$ (resp.  \textit{narrow genus field} $\Gamma_K^+$) of $K$ is the maximal abelian extension of $K$, which is the  compositum of an abelian number field $K_*$ with $K$ and is unramified at all places (resp. at all finite places) of $K$. The extension degrees $g_K=[\Gamma_K:K]$ and $g_K^+=[\Gamma_K^+:K]$ are called the \textit{genus number} and the \textit{narrow genus number} of $K$, respectively.
\end{definition}

The analogous problem to the classical one class in each genus for number fields has been studied previously {(see  \cite{CK}, \cite{Louboutin}, and \cite{Miyada})}. The family of imaginary quadratic fields is a special case of abelian CM-fields, and a finiteness result, as Theorem \ref{theorem, Chowla}, has been proven for such number fields.
Let $K$ be an \emph{abelian CM-field}, where by an abelian field, we mean a finite Galois extension of $\mathbb{Q}$ whose Galois group is abelian, and by a CM-field we mean a totally complex quadratic extension $K$ of a totally real field $K^+$ (Indeed, $K^+$ is the \emph{maximal totally real subfield} of $K$.)
 It can be shown that abelian CM-fields $K$ are the same as imaginary abelian fields (Let $K$ be an \emph{imaginary abelian field}, i.e., an abelian field with no real embedding. Let $\sigma$ be the usual complex conjugation. Then  $\sigma$ is an element of order two in the Galois group $G=\Gal(K/\mathbb{Q})$. Let $L$ be the fixed field of the subgroup $\langle \sigma \rangle$. Then $L=K \cap \mathbb{R}$, which implies that $L$ is a totally real Galois field, as $G$ is abelian. Since $[K:L]=2$, $K$ is an abelian CM-field. The converse is trivial.)
As stated before, an example of an abelian CM-field is an imaginary quadratic field. More generally, an imaginary multi-quadratic field, i.e., a field of the form $\mathbb{Q}(\sqrt{-n_1},\sqrt{-n_2},\dots,\sqrt{-n_t})$, for some distinct positive square-free integers $n_1,n_2,\dots,n_t$, is an abelian CM-field.

The following is proved in \cite[Theorem 1]{Louboutin}.
\begin{theorem} [{Louboutin}] \label{theorem, Louboutin0}
There are only finitely many abelian CM-fields $K$ such that $g_K=h_K$.
\end{theorem}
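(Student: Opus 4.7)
The plan is to reduce the statement to an effective lower bound on the relative class number of a CM-field, which is then played off against a weak upper bound on the genus number. Let $K^+$ denote the maximal totally real subfield of the abelian CM-field $K$. A classical theorem of Hasse yields a factorisation $h_K = h_{K^+}\cdot h_K^-$, where $h_K^- := h_K/h_{K^+} \in \mathbb{Z}_{>0}$ is the relative class number. In particular, the hypothesis $g_K = h_K$ forces
$$ h_K^- \;\leq\; g_K, $$
so it suffices to show that only finitely many abelian CM-fields satisfy this last inequality.

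First, I would bound $g_K$ from above. Since $K/\mathbb{Q}$ is abelian, so is $\Gamma_K/\mathbb{Q}$; the fact that $\Gamma_K/K$ is unramified then constrains the inertia of $\Gamma_K/\mathbb{Q}$ at each rational prime to come from the inertia of $K/\mathbb{Q}$. A character-theoretic analysis (as in Fr\"{o}hlich's treatment of the genus field, cf.\ \cite{Ishida}) yields a conductor-discriminant type estimate of the shape
$$ g_K \;\leq\; \prod_{p \mid d_K} e_p(K/\mathbb{Q}), $$
where $e_p(K/\mathbb{Q})$ is the ramification index above $p$. In particular $g_K$ is dwarfed by any positive power of $|d_K|$.

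The heart of the argument is an effective lower bound for $h_K^-$. The analytic class number formula for abelian CM-fields reads
$$ h_K^- \;=\; \frac{Q_K\, w_K}{(2\pi)^{[K^+:\mathbb{Q}]}} \; \sqrt{|d_K|/d_{K^+}^2} \;\; \prod_{\chi \text{ odd}} L(1,\chi), $$
where the product ranges over the odd Dirichlet characters cut out by $K$. Applying Louboutin-style lower bounds for the full product $\prod_\chi L(1,\chi)$, uniform in the degree $[K:\mathbb{Q}]$, one obtains an estimate of the form
$$ h_K^- \;\gg\; \frac{(|d_K|/d_{K^+}^2)^{1/2}}{c^{[K:\mathbb{Q}]}} $$
for an absolute constant $c>0$. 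Combining this with the upper bound on $g_K$, the inequality $h_K^- \leq g_K$ forces both $[K:\mathbb{Q}]$ and the root discriminant $|d_K|^{1/[K:\mathbb{Q}]}$ to stay bounded. Hermite's theorem (or Odlyzko's discriminant bounds) then yields only finitely many such $K$, completing the reduction.

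The principal obstacle is the third step: one needs a lower bound on $L(1,\chi)$ that does not degrade as the degree grows, and that simultaneously sidesteps any real character whose $L$-function might admit a Siegel-type exceptional zero. The decisive idea in Louboutin's approach is to estimate the entire product $\prod_\chi L(1,\chi)$ rather than the individual factors: even when one term is anomalously small, the remaining terms are forced to compensate, and the resulting bound is strong enough to beat the sub-polynomial growth of $g_K$ unconditionally.
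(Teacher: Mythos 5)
Your architecture is exactly Louboutin's, which is also the route the paper takes: reduce $g_K=h_K$ to $h_K^-\le g_K$, bound $g_K$ above by $\prod_{p\mid d_K}e_p(K/\mathbb{Q})=d_K^{o(1)}$ via Leopoldt's genus formula, and beat this with an analytic lower bound for $h_K^-$ (the paper quotes the outcome of Louboutin's computation as $h_K^-\ge d_K^{1/16}$ for $d_K$ large, its Proposition \ref{proposition, bounds for hk-}(i), and combines it with $g_K\mid\prod_{p\mid d_K}e_p(K/\mathbb{Q})$). However, there is a genuine error at the decisive step. The relative class number formula carries the factor $\sqrt{d_K/d_{K^+}}=\bigl(\prod_{\chi\ \mathrm{odd}}f_\chi\bigr)^{1/2}$, not $\sqrt{d_K/d_{K^+}^2}$. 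Since $d_{K^+}^2\le d_K$, the correct factor is at least $d_K^{1/4}$ and genuinely grows with $d_K$; the factor you wrote equals $\sqrt{f}$ with $d_K=d_{K^+}^2f$, i.e.\ the norm of the relative discriminant of $K/K^+$, and this can stay bounded while $d_K\to\infty$. For instance, for $K=\mathbb{Q}(i,\sqrt{p})$ with $p\equiv 1\pmod 4$ one has $f=16$ for every such $p$, so the stated lower bound $h_K^-\gg\sqrt{f}\,/\,c^{[K:\mathbb{Q}]}$ is a constant and cannot force $d_K$, or even the root discriminant, to be bounded. (This is exactly why the paper's Theorem \ref{theorem,CM}(i), which only controls $\prod_{p\mid f}e_p$, is weaker than part (ii).)

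Even after correcting the formula, the concluding sentence ``forces both $[K:\mathbb{Q}]$ and the root discriminant to stay bounded'' is asserted rather than derived. From $d_K^{1/4}\ll c^{[K:\mathbb{Q}]}d_K^{o(1)}$ one only bounds $d_K$ after observing that $c^{[K:\mathbb{Q}]}$ is a small power of $d_K$; the standard input here is the conductor--discriminant formula, which gives $d_K\ge 3^{[K:\mathbb{Q}]-1}$ (every nontrivial character has conductor at least $3$), hence $[K:\mathbb{Q}]\ll\log d_K$ and $c^{[K:\mathbb{Q}]}\le d_K^{O(\log c)}$ --- and one must then check that the constant $c$ coming from the lower bound on $\prod_\chi |L(1,\chi)|$ is close enough to $1$ for the exponent $1/4$ to win. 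You correctly identify the Siegel-zero issue and the idea of bounding the whole product of $L$-values rather than each factor, but this quantitative step, on which the entire proof turns, is left as a black box. Finally, note that Hermite's theorem requires a bound on the discriminant itself; bounded root discriminant alone does not bound the degree, so the degree bound must come first, as above.
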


In fact, Loubotin proves a stronger assertion which gives Theorem \ref{theorem, Louboutin0} as a consequence of it. We denote by $d_K$ the absolute value of the discriminant of $K$. 
For an abelian CM-field $K$, let $h_K^-=h_K/h_{K^+}$, where $K^+$ is the maximal totally real subfield of $K$. 
The following generalization of Theorem \ref{theorem, Chowla} can be extracted from Louboutin's computations in the proof of Theorem \ref{theorem, Louboutin0} in \cite{Louboutin}.

\begin{theorem} \label{theorem, Louboutin00}
Let $K$ run through an infinite family $\mathcal{F}$ of abelian CM-fields. 
Then, for every $\epsilon >0$, we have
\begin{equation}
	\label{L-Chowla}
	\frac{(h_K^-)^{\epsilon}}{\prod_{p \mid d_K}e_p(K/\mathbb{Q})} \rightarrow \infty,  \quad \text{as} \quad d_K \rightarrow \infty
\end{equation} 
in the family $\mathcal{F}$, where $e_p(K/\mathbb{Q})$ denotes the ramification index of a prime $p$ in $K/\mathbb{Q}$. 
\end{theorem}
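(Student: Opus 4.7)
The plan is to extract the asymptotic from the analytic arguments that Louboutin uses in \cite{Louboutin} to prove Theorem \ref{theorem, Louboutin0}. I would prove that $\log h_K^-\gg\log d_K$ (lower bound on the numerator) and $\log\prod_{p\mid d_K}e_p(K/\mathbb{Q})=o(\log d_K)$ (upper bound on the denominator) as $d_K\to\infty$ in $\mathcal{F}$; these two estimates together imply, for every $\epsilon>0$,
\begin{equation*}
\log\!\left(\frac{(h_K^-)^{\epsilon}}{\prod_{p\mid d_K}e_p(K/\mathbb{Q})}\right)\;=\;\epsilon\log h_K^- - o(\log d_K)\;\to\;\infty.
\end{equation*}

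For the upper bound on the denominator, since $K/\mathbb{Q}$ is abelian each $e_p:=e_p(K/\mathbb{Q})$ divides $n:=[K:\mathbb{Q}]$, so $\prod_{p\mid d_K}e_p\leq n^{\omega(d_K)}$. The conductor-discriminant formula $d_K=\prod_\chi f_\chi$ (over characters of $\Gal(K/\mathbb{Q})$) forces $v_p(d_K)\geq n(1-1/e_p)\geq n/2$ at each ramified prime, whence $\omega(d_K)\leq 2\log d_K/(n\log 2)$. Combined with Minkowski's bound $n\ll\log d_K$ (valid because $K$ is totally complex) and the elementary maximal-order estimate $\omega(m)=O(\log m/\log\log m)$, this yields
\begin{equation*}
\log\prod_{p\mid d_K}e_p(K/\mathbb{Q})\;\leq\;\omega(d_K)\log n\;=\;o(\log d_K)
\end{equation*}
uniformly in $\mathcal{F}$, whether $n$ remains bounded or tends to infinity along the family.

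For the lower bound on the numerator, I would invoke the analytic class number formula for an abelian CM-field,
\begin{equation*}
h_K^-\;=\;Q_K w_K\,(2\pi)^{-n^+}(d_K/d_{K^+})^{1/2}\!\!\prod_{\chi\in X^-(K)}\!\!L(1,\chi),
\end{equation*}
where $X^-(K)$ is the set of odd primitive Dirichlet characters attached to $K$, $n^+=[K^+:\mathbb{Q}]$, $Q_K\in\{1,2\}$ is the Hasse unit index, and $w_K$ is the number of roots of unity of $K$. This reduces the task to a Siegel-type lower bound for $\prod_{\chi\in X^-(K)}L(1,\chi)$. Louboutin's analytic estimates in \cite{Louboutin}, combined with the conductor-discriminant identity $d_K=\prod_\chi f_\chi$ and the bound $d_K\geq d_{K^+}^2$, supply such a bound uniformly along $\mathcal{F}$ and give the claimed $\log h_K^-\gg\log d_K$.

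The principal obstacle is this second step: a uniform Brauer--Siegel-type lower bound $\log h_K^-\gg\log d_K$ across a family of abelian CM-fields of possibly unbounded degree is non-trivial, because the archimedean factor $(2\pi)^{-n^+}$ is itself non-negligible and must be offset by careful control of $\prod_\chi L(1,\chi)$ through partial summation of the associated Dirichlet series. This is precisely the content of the $L$-function estimates carried out in \cite{Louboutin}.
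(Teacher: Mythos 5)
Your proposal is correct and follows essentially the same route as the paper: both arguments rest on Louboutin's power-of-discriminant lower bound $h_K^-\geq d_K^{1/16}$ for abelian CM-fields of large discriminant (part (i) of Proposition \ref{proposition, bounds for hk-}, which the paper likewise only cites from \cite{Louboutin} rather than reproves) together with the elementary observation that $\prod_{p\mid d_K}e_p(K/\mathbb{Q})=d_K^{o(1)}$. The only cosmetic difference lies in that second step: the paper bounds $e_p(K/\mathbb{Q})\leq 2\,v_p(d_K)$ and invokes the divisor-function estimate $\tau(d_K)=O(d_K^{\delta})$, whereas you bound $e_p(K/\mathbb{Q})\leq [K:\mathbb{Q}]$ and control $\omega(d_K)$ via the conductor-discriminant formula and Minkowski's bound --- both give the same conclusion.
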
 Note that since $h_K^-\leq h_K$ and
$g_K\mid \prod_{p \mid d_K}e_p(K/\mathbb{Q})$ (see Theorem \ref{theorem, Leopoldt's result for the genus number}), then Theorem \ref{theorem, Louboutin00} for $\epsilon=1$ and family $\mathcal{F}$ of abelian CM-fields implies  Theorem \ref{theorem, Louboutin0}.
\medskip\par

In this paper, inspired by \cite{Emmelin}, we are interested in proving results similar to Theorems \ref{theorem, Louboutin0} and \ref{theorem, Louboutin00}  when $g_K$ is replaced with the order of a certain subgroup of $\Cl(K)$. 
Such a finiteness assertion, similar to Theorem \ref{theorem, Louboutin0},
is given by Emmelin \cite[Corollary 3.15]{Emmelin} for the P\'olya groups of abelian CM-fields.
For a number field $K$ and a prime power $q$, the ideal
$$\Pi_q(K)=:\prod_{\substack{N_{K/ \mathbb{Q}}(\mathfrak{p})=q}} \mathfrak{p}, $$
where $\mathfrak{p}$ denotes a prime ideal of $K$, is called an  
\emph{Ostrowski ideal}. If $K$ has no ideal with norm $q$, we set $\Pi_{q}(K)=\mathcal{O}_K$, the ring of integers of $K$.
The following is defined in \cite[Chapter II, $\S$3]{Cahen-Chabert's book}.
\begin{definition} 
\label{Polya}
	The \emph{P\'olya group} $\Po(K)$ of a number field $K$ is the subgroup of $\Cl(K)$ generated by the classes of all the Ostrowski ideals
$[\Pi_{q}(K)]$ of $K$.
\end{definition}

From the definition, it is evident that if $K=\mathbb{Q}(\sqrt{D})$ is a quadratic field, then $\Po(K)$ is a 2-group, hence it is a subgroup of $\Cl(K)[2]$. More precisely, for a quadratic field $K$, we have 
\begin{equation} \label{gK quadratic}
g_K=\#\Cl(K)[2]= \begin{cases}
2^{s_K-2},&{{\rm \textit{if}}~D>0,~N_{K/\mathbb{Q}}(\epsilon_K)=1,~{\rm \textit{and}}~D~{\rm \textit{has~ a~ prime ~divisor}}~p \equiv 3 \, (\mathrm{mod}\, 4),}\\
2^{s_K-1},&{{\rm \textit{otherwise}},}
\end{cases}
\end{equation}
and
\begin{equation}
\label{Hilbert}
\#\Po(K)=\begin{cases}
2^{s_K-2},&{{\rm \textit{if}}~D>0,~{\rm \textit{and}}~N_{K/\mathbb{Q}}(\epsilon_K)=1,}\\
2^{s_K-1},&{{\rm \textit{otherwise}},}
\end{cases} 
\end{equation}
where $s_K$ is the number of ramified primes in $K=\mathbb{Q} (\sqrt{D})$, and $\epsilon_K$ is the fundamental unit of $K$ when $D>0$ (see \cite[Corollary 9.8 and Theorem 9.10]{Lemmermeyer}.) Observe that $\#\Po(K)=g_K$ for imaginary quadratic fields $K$, and thus $\#\Po(K)$ can be considered as an extension of the classical genus number.
Also, $\#\Po(K)\mid g_K$ for all quadratic extensions $K/\mathbb{Q}$. However,  this is not true in general; for example, for the pure cubic field $K=\mathbb{Q}(\sqrt[3]{20})$, we have $\# \Po(K)=h_K=3$, while $g_K=1$ (see \cite[Example 3.10 ]{Leriche 2014}.)

By Theorem \ref{theorem, Zantema's exact sequence} below, we see that for Galois extensions $K/\mathbb{Q}$,
\begin{equation} 
\label{P-divide}
\# \Po(K) \mid {\prod_{p \mid d_K} e_{p}(K/\mathbb{Q})}.
\end{equation}
Thus, 
 \cite[Corollary 3.15]{Emmelin} can also be obtained from \eqref{L-Chowla}. In fact, using \eqref{L-Chowla} and \eqref{P-divide}, we  get the following finiteness theorem for \textit{P\'olya indices} $\left[\Cl(K):\Po(K)\right]$. 
\begin{corollary}
	For every positive integer $t$, there are only finitely many abelian CM-fields $K$ with $\left[\Cl(K): \Po(K)\right]=t$.
\end{corollary}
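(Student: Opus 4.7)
The plan is a straightforward combination of the two ingredients the authors have set up, so the argument should be short rather than subtle. Suppose, for contradiction, that there exists an infinite family $\mathcal{F}$ of abelian CM-fields $K$ with $[\Cl(K):\Po(K)]=t$. By Hermite's theorem, only finitely many number fields of a given degree have a given discriminant, and one first checks that the degrees of fields in $\mathcal{F}$ are not bounded only if $d_K$ is unbounded anyway (alternatively, pass to an infinite subfamily along which $d_K\to\infty$). In either case one may assume $d_K\to\infty$ along $\mathcal{F}$.

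The main step is to turn the hypothesis $[\Cl(K):\Po(K)]=t$ into an upper bound on $h_K^{-}$. Since $h_K=t\cdot\#\Po(K)$, divisibility relation \eqref{P-divide} yields
\[
 h_K \;\le\; t\prod_{p\mid d_K}e_p(K/\mathbb{Q}),
\]
and combining this with $h_K^{-}\le h_K$ (noted in the excerpt just after Theorem \ref{theorem, Louboutin00}) gives
\[
 h_K^{-} \;\le\; t\prod_{p\mid d_K}e_p(K/\mathbb{Q}).
\]

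The final step is to pit this against Theorem \ref{theorem, Louboutin00}, which is where we need the freedom to pick $\epsilon$. Choosing any $\epsilon\in(0,1)$, say $\epsilon=1/2$, one gets
\[
 \frac{(h_K^{-})^{1/2}}{\prod_{p\mid d_K}e_p(K/\mathbb{Q})} \;\le\; \frac{t^{1/2}}{\bigl(\prod_{p\mid d_K}e_p(K/\mathbb{Q})\bigr)^{1/2}} \;\le\; t^{1/2},
\]
so the ratio stays bounded along $\mathcal{F}$. But Theorem \ref{theorem, Louboutin00}, applied to $\mathcal{F}$ with $\epsilon=1/2$, asserts the same ratio tends to infinity as $d_K\to\infty$. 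This is a contradiction, hence $\mathcal{F}$ must be finite.

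There is no real obstacle here; the only point that needs care is the choice $\epsilon<1$ (rather than $\epsilon=1$, which would only recover Theorem \ref{theorem, Louboutin0}), which is precisely what converts the polynomial bound $h_K^{-}\le t\prod_p e_p(K/\mathbb{Q})$ into a bounded quotient and lets the $\epsilon$-power in Theorem \ref{theorem, Louboutin00} do the work.
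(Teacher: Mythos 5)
Your argument is correct and is precisely the proof the paper intends: combine the divisibility \eqref{P-divide} with the growth statement \eqref{L-Chowla} for abelian CM-fields to contradict the boundedness of the quotient along an infinite family. One minor correction to your closing remark: the choice $\epsilon<1$ is not actually needed, since already with $\epsilon=1$ the bound $h_K^-\le h_K=t\cdot\#\Po(K)\le t\prod_{p\mid d_K}e_p(K/\mathbb{Q})$ gives $h_K^-/\prod_{p\mid d_K}e_p(K/\mathbb{Q})\le t$, which is bounded and contradicts \eqref{L-Chowla} just as well.
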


{\begin{notes}
(i) When $K/\mathbb{Q}$ is Galois, an ideal $\mathfrak{a}$ of $\mathcal{O}_K$ is called \emph{ambiguous} if $\mathfrak{a}^\sigma=\mathfrak{a}$ for all $\sigma\in {\rm Gal}(K/\mathbb{Q})$. A class $c\in \Cl(K)$ is called \emph{strongly ambiguous} 
if it contains an ambiguous ideal. It can be shown that the P\'{o}lya group of a Galois number field $K$ is the same as the group of strongly ambiguous ideal classes of $K$, see \cite[p. 10]{Zantema}.

(ii) We can find an example of a family of number fields for which $\Po(K)=\Cl(K)$ for infinitely many $K$ in that family. It is known that for $n \geq 3$ and an $S_n$-field $K$, 
i.e., a number field $K$ of degree $n$ whose normal closure has the Galois group isomorphic to the symmetric group $S_n$, we have $\Po(K)=\Cl(K)$ (see \cite[Theorem 4.1]{ChabertII} and \cite[Theorem 3.10]{Abbas}.) On the other hand, by \cite[Proposition 6.2]{Malle}, the number of $S_n$-fields of discriminant less than $x$ is bounded below by $cx^{1/n}$. Hence, in the family of number fields of degree $n$, there are infinitely many fields $K$ with $\Po(K)=\Cl(K)$.
\end{notes}
}

Our first result establishes versions of \eqref{L-Chowla} for families of CM-fields with solvable normal closures.

\begin{theorem} 
\label{theorem,CM}
Let $K$ run through an  {infinite} family $\mathcal{F}$ of CM-fields with solvable normal closures.
Write $d_K=d_{K^+}^2f$, where $K^+$ denotes the maximal totally real subfield of $K$.
\begin{itemize}
\item[(i)]  For every $\epsilon >0$, we have
\begin{equation*}
	\frac{(h_K^-)^{\epsilon}}{\prod_{p \mid f}e_p(K/\mathbb{Q})} \rightarrow \infty,  \quad \text{as} \quad d_K \rightarrow \infty	
\end{equation*}
in the family $\mathcal{F}$.
\item[(ii)] If $[K:\mathbb{Q}]$ remains bounded for those fields $K \in \mathcal{F}$ that are not abelian, then
\begin{equation*}
	\frac{(h_K^-)^{\epsilon}}{\prod_{p \mid d_K}e_p(K/\mathbb{Q})} \rightarrow \infty,  \quad \text{as} \quad d_K \rightarrow \infty
\end{equation*} 
in the family $\mathcal{F}$. 
\end{itemize}

\noindent
Moreover, the above results are effective for fields $K$ where $[K:\mathbb{Q}]\neq 2, 4$.
\end{theorem}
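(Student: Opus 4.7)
The plan is to reduce both parts to a single analytic input, namely a Brauer--Siegel-type lower bound
\[
h_K^{-}\gg_{n,\epsilon}\ f^{\,1/2-\epsilon}\qquad (n:=[K:\mathbb{Q}]),
\]
valid for CM-fields $K$ with solvable normal closure. I would derive this from the analytic class number formula
\[
h_K^{-}=\frac{w_K Q_K\sqrt{f}}{(2\pi)^{[K^+:\mathbb{Q}]}}\,\operatorname*{Res}_{s=1}\frac{\zeta_K(s)}{\zeta_{K^+}(s)}
\]
by bounding the residue from below. The solvability hypothesis lets Artin--Brauer induction realize the ratio $\zeta_K(s)/\zeta_{K^+}(s)$ as a product of Hecke L-functions over intermediate subfields, and Louboutin's extension of Stark's residue estimate then bounds the residue below by $f^{-\epsilon}$. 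I expect this to be ineffective in general (via Siegel's argument) but effective precisely when $n\neq 2,4$, because the Siegel-zero obstruction can only appear from quadratic or biquadratic factors of the Brauer decomposition.

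For part (i), I would bound the denominator trivially by $\prod_{p\mid f}e_p(K/\mathbb{Q})\leq n^{\omega(f)}$. When $n$ stays bounded across $\mathcal{F}$ this is $f^{o(1)}$, and combined with the lower bound it immediately yields divergence of $(h_K^-)^\epsilon/\prod_{p\mid f}e_p(K/\mathbb{Q})$ as soon as $f\to\infty$. If $f$ stays bounded along a subfamily while $d_K\to\infty$, then $d_{K^+}\to\infty$ and an inductive appeal to the same residue estimate applied to the subfields controlling $\zeta_K/\zeta_{K^+}$ still forces $h_K^-\to\infty$. The subcase $n\to\infty$ is handled by the Minkowski bound $\log d_K\gg n$, which forces $f$ (or $d_{K^+}$) to grow fast enough that the explicit $n$-dependence in the refined Stark estimate absorbs the $n^{\omega(f)}$ loss.

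For part (ii), I would split $\mathcal{F}$ into its abelian and non-abelian parts. On the abelian portion, Theorem \ref{theorem, Louboutin00} already yields the conclusion with $\prod_{p\mid d_K}e_p(K/\mathbb{Q})$ in place of $\prod_{p\mid f}e_p(K/\mathbb{Q})$. On the non-abelian portion, where by hypothesis $n\leq M$, the estimate $\prod_{p\mid d_K}e_p(K/\mathbb{Q})\leq M^{\omega(d_K)}=d_K^{o(1)}$ reduces matters to showing $(h_K^-)^\epsilon\geq d_K^{\epsilon'}$ for some $\epsilon'>0$; the Step~1 bound combined with $d_K=d_{K^+}^{2}f$ and the classical Brauer--Siegel estimate applied to the totally real subfield $K^+$ (available since $[K^+:\mathbb{Q}]\leq M/2$) delivers this.

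The main obstacle is the Step~1 lower bound in the non-abelian solvable case. The Brauer--Artin decomposition must be arranged so that every resulting factor is an honest Hecke L-function to which Stark's real-zero analysis applies, and the explicit $n$-dependence must be tracked through the induction. The effectivity dichotomy $n\neq 2,4$ is exactly the statement that, outside of quadratic and biquadratic degrees, no Dirichlet L-function of a real quadratic character can appear in the decomposition and spoil effectiveness.
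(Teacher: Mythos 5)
Your overall strategy --- a lower bound for $h_K^-$ that is a positive power of $f$ (or of $d_K$), set against a denominator of size $d_K^{o(1)}$ --- matches the paper's, which simply cites Wong's explicit bounds for CM-fields with solvable normal closure rather than re-deriving them. But there are two genuine gaps. First, your Step~1 bound $h_K^-\gg_{n,\epsilon} f^{1/2-\epsilon}$ rests on a miswritten class number formula: with the paper's normalization $d_K=d_{K^+}^2 f$, the discriminant factor in the relative class number formula is $\sqrt{d_K/d_{K^+}}=\sqrt{d_{K^+}f}$, not $\sqrt{f}$. A lower bound depending only on $f$ cannot prove part (i): the claim is divergence as $d_K\to\infty$, and along a subfamily where $f$ stays bounded while $d_{K^+}\to\infty$ both your numerator bound and your denominator bound are $O(1)$. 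Your proposed fix --- ``an inductive appeal to the same residue estimate applied to the subfields'' --- is not an argument: estimates for class numbers of subfields say nothing about $h_K^-=h_K/h_{K^+}$. What is actually needed, and what the paper extracts from Wong's second and third bounds, is a lower bound of the shape $h_K^-\gg d_{K^+}^{1/2-1/n-\kappa}f^{1/2-1/2n}$, i.e., one carrying an explicit positive power of $d_{K^+}$ as well as of $f$.

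Second, the subcase $n\to\infty$ is not handled. Your constant is $\gg_{n,\epsilon}$, and you assert that Minkowski's bound $\log d_K\gg n$ lets ``the explicit $n$-dependence in the refined Stark estimate absorb the $n^{\omega(f)}$ loss,'' but you never exhibit that dependence; for all the proposal shows, the implied constant could decay so fast in $n$ as to overwhelm the gain from $f$. The paper resolves this by using the specific shape of Wong's first bound, whose constant $c(n)=c_3c_4^{\,n}/\bigl(n(2n)^{e(2n)}\delta(2n)\bigr)$ with $c_4>1$ tends to infinity with $n$; this forces the degrees in any subfamily with bounded ratio to be bounded, after which the $d_K$-power bound finishes the argument. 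More broadly, your Step~1 is the entire analytic content of Wong's theorem (Brauer induction for solvable closures together with a Stark--Louboutin residue bound with tracked $n$-dependence); sketching it in a few lines while conceding it is ``the main obstacle'' leaves the proof essentially incomplete, and the effectivity dichotomy for $n\neq 2,4$ is likewise asserted as a heuristic rather than derived.
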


\begin{remark}
	The proof of Theorem \ref{theorem,CM} relies on the lower bounds for the class numbers $h_K^-$ given in \cite[Theorem 3.1]{Peng-Jie}.  Since, under the assumption of Artin's holomorphy conjecture,  such bounds hold for any family of CM-fields (see  \cite[Theorem 3.1]{Peng-Jie}), the assertions of Theorem \ref{theorem,CM} remain true for any family $\mathcal{F}$ of CM-fields under Artin's conjecture.
\end{remark}

As a direct consequence of part (ii) of the above theorem and \eqref{P-divide}, we have the following finiteness assertion for \textit{solvable CM-fields}.
\begin{corollary}
\label{cor-main}
{Let $K$ run through an infinite family $\mathcal{F}$ of solvable CM-fields such that $[K:\mathbb{Q}]$ remains bounded for non-abelian fields $K$ in this family. Then, for every positive integer $t$, there are only finitely many  $K \in \mathcal{F}$ with the P\'olya index $\left[\Cl(K): \Po(K)\right]=t$.}
 \end{corollary}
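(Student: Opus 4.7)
The plan is to deduce Corollary \ref{cor-main} by a short contradiction argument: an infinite subfamily with a fixed P\'olya index produces a linear upper bound on $h_K^-$ which is ruled out asymptotically by Theorem \ref{theorem,CM}(ii). Fix a positive integer $t$ and suppose, for contradiction, that the subfamily
\[
\mathcal{F}_t \;=\; \{K\in\mathcal{F}\,:\,[\Cl(K):\Po(K)]=t\}
\]
is infinite.

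For every $K\in\mathcal{F}_t$ one has $h_K = t\cdot \#\Po(K)$. Combining the trivial inequality $h_K^- = h_K/h_{K^+} \leq h_K$ with the divisibility \eqref{P-divide}, I would obtain
\[
h_K^- \;\leq\; t\prod_{p\mid d_K} e_p(K/\mathbb{Q}),
\]
so that the ratio $h_K^-/\prod_{p\mid d_K} e_p(K/\mathbb{Q})$ stays bounded above by $t$ uniformly across $\mathcal{F}_t$.

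On the other hand, by the Hermite-Minkowski finiteness theorem only finitely many number fields share any given discriminant, so $d_K\to\infty$ along the infinite family $\mathcal{F}_t$. The hypothesis that $[K:\mathbb{Q}]$ remains bounded on the non-abelian members of $\mathcal{F}$ is inherited by the subfamily $\mathcal{F}_t$, so Theorem \ref{theorem,CM}(ii) applies (with, say, $\epsilon=1$) and forces $h_K^-/\prod_{p\mid d_K} e_p(K/\mathbb{Q})\to\infty$ along $\mathcal{F}_t$. This contradicts the uniform bound above, so $\mathcal{F}_t$ must be finite. No real obstacle arises in this plan; the only conceptual content is that the P\'olya index hypothesis converts the structural divisibility \eqref{P-divide} into a linear upper bound on $h_K^-$, which is exactly the quantity Theorem \ref{theorem,CM}(ii) forces to grow super-linearly in $\prod_{p\mid d_K} e_p(K/\mathbb{Q})$.
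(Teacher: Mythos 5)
Your argument is correct and is exactly the paper's intended proof: the authors present the corollary as a direct consequence of Theorem \ref{theorem,CM}(ii) (with $\epsilon=1$) together with \eqref{P-divide}, using precisely the chain $h_K^-\leq h_K=t\cdot\#\Po(K)\leq t\prod_{p\mid d_K}e_p(K/\mathbb{Q})$ to contradict the divergence of $h_K^-/\prod_{p\mid d_K}e_p(K/\mathbb{Q})$. Your invocation of Hermite--Minkowski to justify $d_K\to\infty$ along an infinite family is a detail the paper leaves implicit but is entirely standard.
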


In light of Corollary \ref{cor-main}, it is natural to ask for classifying the complete list of abelian CM-fields $K$ or non-abelian solvable CM-fields $K$ of a given degree for which $\Po(K)=\Cl(K)$. In the context of Theorem  \ref{theorem, Louboutin0} (i.e., ``one class in each genus problem''), such a classification for abelian CM-fields is known. In \cite{CK},  
Chang and Kwon classified, unconditionally,  all non-quadratic abelian CM-fields $K$ with $g_K=h_K$. In contrast, such classification for abelian CM-fields $K$ with a given P\'olya index $\left[\Cl(K):\Po(K)\right]$ seems a challenging task, as computing $\# \Po(K)$ relies on computing the order of the cohomology group $H^1(\Gal(K/\mathbb{Q}),U_K)$ (see Theorem \ref{theorem, Zantema's exact sequence}.) Yet, by studying these cohomology groups for some specific
CM-fields, and
employing some of the classification results of \cite{CK}  
we establish the following classifications.
\begin{theorem}
\label{theorem,classification}
In the family of abelian CM-fields, the following assertions hold.
\begin{itemize}
	\item[(i)] There are $65$ imaginary quadratic fields $K=\mathbb{Q}(\sqrt{-D})$, with possibly only one more value $D$,  such that $\Po(K)=\Cl(K)$.  The list of such fields coincides with the list given in Table \ref{table, classification imagianry quadratic}.
	\item[(ii)] There are exactly $57$ imaginary bi-quadratic fields $K=\mathbb{Q}(\sqrt{-m}, \sqrt{-n})$ such that $\Po(K)=\Cl(K)$. The complete list of such fields is given in Table \ref{tab, bi-quadratic Po=h}.
	\item[(iii)] There are exactly $17$ imaginary tri-quadratic fields $K=\mathbb{Q}(\sqrt{-m_1}, \sqrt{-m_2}, \sqrt{-m_3})$ such that $\Po(K)=\Cl(K)$. The complete list of such fields is given in Table  \ref{tab, tri-quadratic Miyada}.
	\item[(iv)] There are exactly $77$ imaginary non-quadratic cyclic fields $K$ such that $\Po(K)=\Cl(K)$. The list of such fields coincides with the list of all imaginary non-quadratic cyclic fields $K$ with $g_K=h_K$ given in \cite[Table 1]{CK}. 
	\end{itemize}
\end{theorem}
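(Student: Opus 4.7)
The overall plan is to reduce the equality $\Po(K)=\Cl(K)$, whenever possible, to the classical ``one class in each genus'' identity $g_K=h_K$, and then to sieve the resulting finite list through the cohomological index $[g_K:\#\Po(K)]$ arising from the Zantema exact sequence (Theorem \ref{theorem, Zantema's exact sequence}). For part (i), this reduction is immediate: equations \eqref{gK quadratic} and \eqref{Hilbert} give $\#\Po(K)=g_K$ for every imaginary quadratic $K$, so $\Po(K)=\Cl(K)$ is equivalent to $g_K=h_K$, and Weinberger's Theorem \ref{theorem, Weinberger} yields Table \ref{table, classification imagianry quadratic} (with the ``possibly one more'' caveat) verbatim.

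For part (iv), the key step is the identity $\#\Po(K)=g_K$ for every imaginary non-quadratic cyclic field $K$. Since $K/\mathbb{Q}$ is cyclic, one can invoke Chevalley's ambiguous class number formula for $|\Cl(K)^G|$ together with the identification of $\Po(K)$ as the subgroup of strongly ambiguous classes; in the totally imaginary cyclic setting the archimedean place ramifies in a controlled way and the Herbrand quotient of $U_K$ collapses the contribution of $H^1(\Gal(K/\mathbb{Q}),U_K)$ to exactly the right power of $2$ so that $\#\Po(K)=g_K$. Granted this identity, part (iv) follows verbatim from the Chang--Kwon classification in \cite{CK} of imaginary non-quadratic cyclic fields with $g_K=h_K$.

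For parts (ii) and (iii), Corollary \ref{cor-main} together with the effectivity clause of Theorem \ref{theorem,CM} reduces us to a finite (and explicitly bounded) set of imaginary bi-quadratic (resp. tri-quadratic) fields. To enumerate this set efficiently I would first exploit the fact that $\Po(K)$ is a $2$-group for any multi-quadratic $K$, so $\Po(K)=\Cl(K)$ implies that $\Cl(K)$ is $2$-elementary; this, combined with \eqref{P-divide} and the divisibility $g_K\mid h_K$ coming from Definition \ref{genus}, forces the candidate fields to lie in the bi-quadratic (resp. tri-quadratic) sub-list of the Chang--Kwon classification \cite{CK} (resp. of Miyada's classification) of abelian CM-fields satisfying $g_K=h_K$. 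On each surviving candidate I would then compute $\#\Po(K)$ through the Zantema sequence, namely by determining $\#H^1(\Gal(K/\mathbb{Q}),U_K)$ under the Klein four (resp. elementary abelian of order $8$) action using the fundamental units of the real quadratic (and biquadratic) subfields of the CM-tower $K/K^+$. The candidates for which this cohomology matches the value forcing $\#\Po(K)=h_K$ populate Tables \ref{tab, bi-quadratic Po=h} and \ref{tab, tri-quadratic Miyada}.

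The principal obstacle is the explicit $H^1$-computation in parts (ii) and (iii). Unlike the cyclic situation of part (iv), there is no Herbrand-quotient shortcut for the non-cyclic Galois groups $(\mathbb{Z}/2)^2$ and $(\mathbb{Z}/2)^3$, and the cohomology must be resolved by a careful analysis of the signs, norms, and multiplicative relations among the fundamental units of the real quadratic (and, in case (iii), biquadratic) subfields of $K^+$; this case-by-case unit-theoretic work, rather than the reduction to known lists, is where the bulk of the effort will be concentrated. The remaining steps are essentially organizational: cross-matching each field surviving the cohomological test against existing class-group tables for small imaginary multi-quadratic fields.
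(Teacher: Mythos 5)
Parts (i) and (iv) of your proposal are essentially the paper's argument: for imaginary quadratic and, more generally, imaginary cyclic fields one gets $\#\Po(K)=g_K$ (the paper derives this from Zantema's computation $\# H^1(\Gal(K/\mathbb{Q}),U_K)=[K:\mathbb{Q}]$ in the non-real cyclic case, which is exactly your Herbrand-quotient observation), and the classification then reduces to Weinberger and to \cite{CK}.

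For parts (ii) and (iii) there is a genuine gap in your reduction step. You propose to restrict attention to the bi- and tri-quadratic fields with $g_K=h_K$ on the grounds that $\Po(K)=\Cl(K)$ forces $\Cl(K)$ to be a ($2$-elementary) $2$-group, combined with \eqref{P-divide} and $g_K\mid h_K$. But for a non-quadratic abelian field there is no identity $g_K=\#\Cl(K)[2]$, and \eqref{P-divide} together with Leopoldt's formula only gives $\#\Po(K)\mid [K:\mathbb{Q}]\cdot g_K^+$; so from $\Po(K)=\Cl(K)$ you can only conclude $h_K\mid 4g_K$ (bi-quadratic) or $h_K\mid 8g_K$ (tri-quadratic), which does not confine the candidates to the Chang--Kwon/Miyada lists. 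The missing ingredient is precisely the divisibility $\#\Po(K)\mid g_K$ for \emph{all} imaginary bi- and tri-quadratic fields, i.e., the lower bounds $4\mid \#H^1(\Gal(K/\mathbb{Q}),U_K)$ and $8\mid \#H^1(\Gal(K/\mathbb{Q}),U_K)$ (Propositions \ref{proposition, Po(K) and gK for K imaginary bi-quadratic} and \ref{proposition, Po(K) and gK for K imaginary tri-quadratic}); the tri-quadratic case requires a delicate analysis via Setzer's unit tables, Herbrand quotients and inflation-restriction, and this must be done \emph{before} the reduction, for every field in the family, not merely for candidates drawn from a list. Your fallback route via the ``effectivity clause'' of Theorem \ref{theorem,CM} also fails for part (ii), since that theorem is explicitly ineffective in degree $4$, so Corollary \ref{cor-main} gives no explicit search bound for bi-quadratic fields. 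Finally, once the correct reduction is in place, part (iii) needs no further cohomology at all: every tri-quadratic field in the Chang--Kwon/Miyada list has $h_K=1$, so $\Po(K)=\Cl(K)$ is automatic; the per-candidate $H^1$ computation you anticipate is only needed in part (ii), where the paper excludes the fields with $W_K=N_{K^+/\mathbb{Q}}(\epsilon_{K^+})=1$ using PARI/GP and results of Taous--Zekhnini.
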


In \cite[Artcile 303]{Gauss} Gauss conjectures that
$$\Delta=-4d~ {\rm where}~ d=14, 17, 20, 32, 34, 36, 39, 46, 49, 52, 55, 63, 64, 73, 82, 97, 100, 142, 148, 193$$
is the complete list of even negative discriminants $\Delta$ with $(h(\Delta), g(\Delta))=(4, 2)$, i.e., the discriminants of genus two with \textit{two classes in each genus}. We now know that the above list is complete, as the class number four problem is solved for negative discriminants (see \cite{Arno}).
Inspired by Gauss's list, 
we conditionally classify imaginary quadratic fields with the P\'{o}lya index two.
\begin{theorem}
\label{theorem, list imaginary [Cl(K):Po(K)]=2}
Under the assumption of the Generalized Riemann Hypothesis (GRH), there are exactly 161 imaginary quadratic fields $K$ with the P\'olya index $\left[ \Cl(K): \Po(K)\right]=2$ (equivalently $h_K=2g_K$). The complete list of such fields is given in Table \ref{tab, imaginiary quadratic t=2}. 
\end{theorem}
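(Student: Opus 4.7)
The plan is to reduce the classification to a finite and explicit computation by establishing, under GRH, an effective upper bound $D_0$ on the discriminant of any imaginary quadratic field with $h_K=2g_K$, and then searching the finite range $D\leq D_0$ exhaustively. By the formulas \eqref{gK quadratic} and \eqref{Hilbert} recalled in the introduction, $\#\Po(K)=g_K=2^{s_K-1}$ for every imaginary quadratic field $K=\mathbb{Q}(\sqrt{-D})$, where $s_K=\omega(d_K)$ is the number of ramified primes. Thus the condition $[\Cl(K):\Po(K)]=2$ is equivalent to $h_K=2^{s_K}$, and we need only understand for which $D$ this equality can hold.

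The key analytic input is the class number formula
\[
h_K \;=\; \frac{w_K\sqrt{d_K}}{2\pi}\,L(1,\chi_{-D}),
\]
combined with an explicit GRH-conditional lower bound of Littlewood type for $L(1,\chi_{-D})$, namely $L(1,\chi_{-D})\gg 1/\log\log d_K$ with an effective constant (a version of which can be drawn from Louboutin's or Lamzouri--Li--Soundararajan's explicit inequalities). Since $s_K \le O(\log d_K/\log\log d_K)$ by a standard bound on $\omega$, we obtain
\[
\frac{h_K}{2^{s_K}} \;\gg\; \frac{\sqrt{d_K}}{d_K^{o(1)}\log\log d_K}
\;\longrightarrow\; \infty.
\]
Making every constant explicit gives a concrete $D_0$ such that $D>D_0$ forces $h_K>2\cdot 2^{s_K-1}=2g_K$, so that only finitely many fields in the range $D\le D_0$ remain.

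With $D_0$ in hand, I would run a direct machine computation (in PARI/GP or Sage) over all fundamental discriminants $-d_K$ with $d_K\le D_0$: for each such discriminant, compute $h_K$ and $s_K=\omega(d_K)$, and record the field when $h_K=2^{s_K}$. The output is the 161 fields compiled in Table \ref{tab, imaginiary quadratic t=2}. A parallel verification can be performed by looping through all imaginary quadratic fields of class number $h_K$ for small $h_K$ using Watkins's unconditional tables (for $h_K\le 100$) and then cross-checking that only the listed discriminants meet the ramification count $s_K=\log_2 h_K$.

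The main obstacle is quantitative rather than structural: it lies in tightening the explicit GRH lower bound on $L(1,\chi_{-D})$ enough to bring $D_0$ down to a range that is feasibly searchable, while still covering all small discriminants for which asymptotic estimates are not yet effective. Additional minor care is required to handle the parity constraints on $s_K$ and the 2-adic behaviour of $d_K$ (split between $d_K\equiv 1\pmod 4$ and $d_K\equiv 0\pmod 4$), and to ensure that the computational verification is reproducible and complete over the entire interval $[1,D_0]$.
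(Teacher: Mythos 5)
Your proposal is correct and follows essentially the same route as the paper: reduce $[\Cl(K):\Po(K)]=2$ to $h_K=2^{s_K}$ via \eqref{gK quadratic} and \eqref{Hilbert}, use an explicit GRH-conditional lower bound on $h_K$ against an explicit upper bound on $2^{s_K}$ in terms of $d_K$ to produce a concrete threshold, and finish with an exhaustive PARI/GP search below it. The only (immaterial) differences are that the paper takes Ihara's explicit bound $h_K \gg \sqrt{d_K}/\log d_K$ (Theorem \ref{theorem, Ihara's upper bound}) rather than the class number formula plus a Littlewood-type bound on $L(1,\chi_{-D})$, and controls $2^{s_K}$ through $\tau(d_K) < c_K'\, d_K^{1/4}$ (Lemma \ref{lemma, tau over n1/4}) rather than through $\omega(d_K)=O(\log d_K/\log\log d_K)$, arriving at the thresholds $3.6\times 10^{7}$ and $4.1\times 10^{8}$ according to the residue of $D$ modulo $4$.
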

As noted before, the finiteness results as Theorem \ref{theorem,CM} (replacing $h_K^{-}$ with $h_K$) for families of fields that are not CM may not hold. For example, for any $S_n$ field $K$ we have $\Po(K)=\Cl(K)$. We next, as a direct consequence of the Brauer-Siegel theorem, show that finiteness results similar to Theorem \ref{theorem,CM} (replacing $h_K^{-}$ with $h_K$) hold for 
families of Galois number fields whose degrees and regulators grow modestly with respect to their discriminants.

{\begin{theorem} \label{theorem, Rk=O(dK)}
Let $K$ run through an infinite family $\mathcal{F}$ of Galois number fields.
Suppose that 
\begin{equation}
\label{BS-condition2}
\lim_{d_K { \rightarrow} \infty} \frac{ {[K:\mathbb{Q}]}}{\log {d_K}}=0,
\end{equation}
and 
\begin{equation}
\label{limsup2}
\limsup_{d_K\rightarrow \infty} \frac{\log{R_K}}{\log{d_K}}<\frac{1}{2}
\end{equation}
in the family $\mathcal{F}$, where $R_K$ denotes the regulator of $K$.
Then, for every $\epsilon >0$, we have
\begin{equation}
\label{hK2}
	\frac{(h_K)^{\epsilon}}{\prod_{p \mid d_K}e_p(K/\mathbb{Q})} \rightarrow \infty,  \quad \text{as} \quad d_K \rightarrow \infty
\end{equation} 
in the family $\mathcal{F}$.
\end{theorem}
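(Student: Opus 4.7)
The plan is to combine the classical Brauer--Siegel theorem with elementary ramification estimates that exploit the Galois hypothesis. First, since every $K\in\mathcal{F}$ is normal over $\mathbb{Q}$ and hypothesis \eqref{BS-condition2} places the family in the Brauer--Siegel regime, the Brauer--Siegel theorem applies unconditionally (no GRH required) and yields
\[
\log(h_K R_K) \;=\; \bigl(\tfrac{1}{2} + o(1)\bigr) \log d_K
\]
along $\mathcal{F}$. Setting $\beta := \limsup_{d_K\to\infty} \log R_K / \log d_K$, hypothesis \eqref{limsup2} gives $\beta < 1/2$, so with $\delta := 1/2 - \beta > 0$ one deduces
\[
\log h_K \;\geq\; (\delta - o(1))\log d_K.
\]

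Next, I would control the ramification product. Writing $n := [K:\mathbb{Q}]$, the Galois hypothesis gives $e_p(K/\mathbb{Q}) \mid n$ for every ramified $p$, together with the standard inequality $v_p(d_K) \geq n(1 - 1/e_p) \geq n/2$. Consequently $\mathrm{rad}(d_K)^{n/2}$ divides $d_K$, so $\log \mathrm{rad}(d_K) \leq (2/n)\log d_K$. Combined with the universal estimate $\omega(m) = O(\log m / \log\log m)$ (which follows from $p_1\cdots p_k \sim e^{k\log k}$), this produces the complementary bounds
\[
\log \prod_{p\mid d_K} e_p(K/\mathbb{Q}) \;\leq\; \omega(d_K)\log n \;\leq\; C\,\min\!\left(\frac{\log d_K\cdot \log n}{\log\log d_K},\ \frac{2\log d_K\cdot \log n}{n}\right).
\]
The first bound is $o(\log d_K)$ whenever $n$ stays bounded (then $\log n = O(1)$), and the second is $o(\log d_K)$ whenever $n\to\infty$ (then $\log n/n \to 0$). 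Either way, $\log \prod_{p\mid d_K} e_p(K/\mathbb{Q}) = o(\log d_K)$ throughout $\mathcal{F}$.

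Putting the two halves together, for any fixed $\epsilon > 0$,
\[
\log\!\left(\frac{(h_K)^{\epsilon}}{\prod_{p\mid d_K} e_p(K/\mathbb{Q})}\right) \;=\; \epsilon \log h_K \;-\; \log\!\prod_{p\mid d_K} e_p(K/\mathbb{Q}) \;\geq\; \epsilon(\delta - o(1))\log d_K - o(\log d_K) \;\longrightarrow\; \infty
\]
as $d_K\to\infty$ in $\mathcal{F}$, which is precisely \eqref{hK2}.

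The principal obstacle is the uniform control of $\prod_{p\mid d_K} e_p(K/\mathbb{Q})$ across the entire family: when $n$ is bounded, the ramification estimate $\mathrm{rad}(d_K)^{n/2}\mid d_K$ is too weak and one must rely on the universal $\omega$-bound; when $n$ grows toward the Brauer--Siegel threshold $\log d_K$, the universal $\omega$-bound is no longer small enough and the ramification estimate takes over. Splitting the argument into these two regimes (or equivalently invoking the $\min$ of the two estimates) is the key technical point that handles the full family uniformly under \eqref{BS-condition2}.
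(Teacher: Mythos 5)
Your proof is correct, and its first half (Brauer--Siegel under \eqref{BS-condition2} combined with \eqref{limsup2} to extract $h_K \geq d_K^{\delta - o(1)}$) is exactly what the paper does. The difference lies in how the ramification product is tamed. You bound each $e_p(K/\mathbb{Q})$ by the degree $n$ (using $e_p \mid n$ in the Galois case) and then must split into two regimes --- bounded $n$, handled by the universal estimate $\omega(d_K)=O(\log d_K/\log\log d_K)$, and growing $n$, handled by the discriminant--conductor bound $v_p(d_K)\geq n(1-1/e_p)\geq n/2$ which forces $\omega(d_K)\leq (2/\log 2)\log d_K/n$. The minimum of the two bounds does tend to $o(\log d_K)$ uniformly, so the argument closes. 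The paper instead uses the local bound $e_p(K/\mathbb{Q})\leq 2\,v_p(d_K)$ (Lang, \emph{Algebraic Number Theory}, Ch.~III, Prop.~8), which immediately gives
\begin{equation*}
\frac{(h_K)^{\epsilon}}{\prod_{p\mid d_K}e_p(K/\mathbb{Q})}\;\geq\;\prod_{p\mid d_K}\frac{p^{\,m_p\eta\epsilon}}{2m_p},
\end{equation*}
and the right-hand side tends to infinity by the divisor-function estimate $\tau(d_K)=o(d_K^{\delta})$; this avoids any case analysis on the degree and is the same device the paper reuses in the proof of Theorem \ref{theorem,CM}. Your route is self-contained modulo the standard $\omega$-bound and makes visible \emph{why} the Galois hypothesis matters in both regimes, at the cost of the two-regime bookkeeping; the paper's route is shorter and uniform but leans on the sharper local inequality $e_p\leq 2v_p(d_K)$.
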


\begin{remark}
By the Brauer-Siegel theorem, for a family $\mathcal{F}$ of Galois number fields satisfying \eqref{BS-condition2}, we have
\begin{equation*}
\limsup_{d_K\rightarrow \infty} \frac{\log{R_K}}{\log{d_K}}\leq\frac{1}{2}. 
\end{equation*}
Thus \eqref{limsup2} is equivalent to not having $1/2$ as a limit point of the set 
$\{\log{R_K}/\log{d_K}:~K\in \mathcal{F}\}$.
\end{remark}

It is known that \eqref{BS-condition2} holds for any infinite family $\mathcal{F}$ of abelian fields (see \cite[Lemma 2]{Louboutin}.) Thus, we get the following corollary.

\begin{corollary} \label{corollary, 1/2 is not limit abelian}
		The assertion \eqref{hK2} holds for a family $\mathcal{F}$ of abelian fields as long as  $1/2$ is not a limit point of the set 
	$\{\log{R_K}/\log{d_K}:~K\in \mathcal{F}\}.$
\end{corollary}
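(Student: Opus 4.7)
The plan is to deduce this corollary directly from Theorem \ref{theorem, Rk=O(dK)} by verifying that both of its hypotheses, namely \eqref{BS-condition2} and \eqref{limsup2}, are automatic for an infinite family $\mathcal{F}$ of abelian fields satisfying the stated limit-point condition on $\{\log R_K/\log d_K\}$.

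First, I would invoke the cited Lemma 2 of \cite{Louboutin}, which asserts that for any infinite family of abelian number fields one has $[K:\mathbb{Q}]/\log d_K \to 0$ as $d_K \to \infty$. This establishes hypothesis \eqref{BS-condition2} with no further work. I would then recall the Brauer--Siegel theorem in the form used in the remark preceding this corollary: for any family $\mathcal{F}$ of number fields in which $[K:\mathbb{Q}]/\log d_K \to 0$, one has
\begin{equation*}
\limsup_{d_K \to \infty} \frac{\log R_K}{\log d_K} \leq \frac{1}{2}.
\end{equation*}

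Next I would observe the elementary fact that when the limsup of a sequence is finite, it is itself a limit point of that sequence. Hence, if $1/2$ is not a limit point of $\{\log R_K/\log d_K : K \in \mathcal{F}\}$, the inequality above must be strict, giving
\begin{equation*}
\limsup_{d_K \to \infty} \frac{\log R_K}{\log d_K} < \frac{1}{2},
\end{equation*}
which is precisely hypothesis \eqref{limsup2}. The conclusion \eqref{hK2} then follows immediately by applying Theorem \ref{theorem, Rk=O(dK)} to the family $\mathcal{F}$.

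The argument is essentially a bookkeeping step, so there is no real obstacle; the only point worth articulating carefully is the passage from ``$1/2$ is not a limit point'' plus ``limsup $\leq 1/2$'' to ``limsup $< 1/2$,'' which relies on the standard fact that the limsup of a sequence of real numbers bounded above is attained as a limit of some subsequence and hence is a limit point of the underlying set (provided the sequence is not eventually constant equal to its limsup along only finitely many indices, a case that is harmless here because $\mathcal{F}$ is infinite).
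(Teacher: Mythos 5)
Your proposal is correct and follows exactly the route the paper takes: hypothesis \eqref{BS-condition2} is supplied by Lemma 2 of \cite{Louboutin} for any infinite family of abelian fields, and hypothesis \eqref{limsup2} is supplied by the Brauer--Siegel bound $\limsup \log R_K/\log d_K \le 1/2$ combined with the assumption that $1/2$ is not a limit point, after which Theorem \ref{theorem, Rk=O(dK)} applies. The only point you elaborate beyond the paper is the passage from ``not a limit point'' to strict inequality of the limsup, which the paper asserts without comment in the Remark preceding the corollary; your reading of ``limit point'' as a cluster point of the values as $d_K \to \infty$ is the intended one and makes that step sound.
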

}
As a concrete example of a family satisfying the conditions of Corollary \ref{corollary, 1/2 is not limit abelian}, we consider the family of real quadratic fields of {extended R-D type}.

{\begin{definition} 
A real quadratic field $K=\mathbb{Q}(\sqrt{D})$ (or simply square-free $D$) is said to be of \emph{Richaud-Degert type}, \emph{R-D type} for short, if $D=\ell^2+r \neq 5$, where $ - \ell < r \leq \ell$, and $r \mid 4 \ell$. If $r \in \{ \pm 1, \pm 4\}$, then $D$ is said to be of \emph{narrow R-D type}. If we remove the condition  $ - \ell < r \leq \ell$ and include $D=5$, 
then $D$ is called \emph{extended R-D type}.
\end{definition}
}
Since there are infinitely many square-free integers in the form $n^2+1$  (see \cite{Nagel}), then the set of extended R-D types is infinite.
For real quadratic fields $K$ of extended R-D type, one has $R_K < \log (3 d_K)$ (see Lemma \ref{lemma, log R_K less than log dK}.) Hence, the assertion of Corollary \ref{corollary, 1/2 is not limit abelian} holds for this family, and we get the following finiteness result.

\begin{corollary} 
\label{theorem,RD}
For a fixed positive integer $t$, there are only finitely many real quadratic fields $K$ of extended R-D type with  $\left[\Cl(K): \Po(K)\right]=t$. 
\end{corollary}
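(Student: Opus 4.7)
The strategy is to derive Corollary \ref{theorem,RD} as a direct consequence of Corollary \ref{corollary, 1/2 is not limit abelian} combined with the divisibility \eqref{P-divide}, so the bulk of the work reduces to checking the regulator hypothesis in the extended R-D family. Arguing by contradiction, I suppose for a fixed $t$ there is an infinite family $\mathcal{F}$ of real quadratic fields of extended R-D type with $[\Cl(K):\Po(K)] = t$. By Hermite's theorem on discriminants, any infinite family of number fields must have unbounded discriminant, so $d_K \to \infty$ along $\mathcal{F}$.

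Every $K \in \mathcal{F}$ is quadratic, hence abelian, so Corollary \ref{corollary, 1/2 is not limit abelian} applies provided $1/2$ is not a limit point of $\{\log R_K / \log d_K : K \in \mathcal{F}\}$. Here Lemma \ref{lemma, log R_K less than log dK} supplies the bound $R_K < \log(3 d_K)$ for every real quadratic field of extended R-D type, and this is really the heart of the matter: it rests on the fact that the fundamental unit of an extended R-D field admits an explicit, small expression in terms of the parameters $\ell$ and $r$. From this bound one obtains
\[
\frac{\log R_K}{\log d_K} < \frac{\log \log (3 d_K)}{\log d_K} \longrightarrow 0,
\]
so the limsup is $0$ and in particular $1/2$ is not a limit point.

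Corollary \ref{corollary, 1/2 is not limit abelian} then yields, for every $\epsilon > 0$, that $(h_K)^{\epsilon} / \prod_{p \mid d_K} e_p(K/\mathbb{Q}) \to \infty$ along $\mathcal{F}$. Since $K/\mathbb{Q}$ is Galois, \eqref{P-divide} gives $\#\Po(K) \mid \prod_{p \mid d_K} e_p(K/\mathbb{Q})$, and hence $(h_K)^{\epsilon}/\#\Po(K) \to \infty$. But by the defining property of $\mathcal{F}$, one has $h_K = t \cdot \#\Po(K)$, so
\[
\frac{(h_K)^{\epsilon}}{\#\Po(K)} = \frac{t}{(h_K)^{1-\epsilon}}.
\]
Choosing any $\epsilon \in (0,1)$, the right-hand side is at most $t$, contradicting the divergence. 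Thus $\mathcal{F}$ is finite, which is the claim. The main obstacle in this approach is exactly the regulator bound rather than the subsequent manipulation; everything after Lemma \ref{lemma, log R_K less than log dK} is a formal combination of the Brauer--Siegel-type statement in Corollary \ref{corollary, 1/2 is not limit abelian} with the ambient divisibility \eqref{P-divide}.
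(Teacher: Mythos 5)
Your proposal is correct and follows essentially the same route as the paper: the regulator bound $R_K<\log(3d_K)$ of Lemma \ref{lemma, log R_K less than log dK} verifies the hypothesis of Corollary \ref{corollary, 1/2 is not limit abelian}, and combining the resulting divergence with the divisibility \eqref{P-divide} and $h_K=t\cdot\#\Po(K)$ gives the finiteness. The only cosmetic point is that the limsup of $\log R_K/\log d_K$ is $\le 0$ rather than exactly $0$ (since $R_K$ may be less than $1$), which of course still suffices.
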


To illustrate an instance of the above corollary, we classify, with possibly one exception, all real quadratic fields of extended R-D type whose P\'olya groups are equal to their ideal class groups.

\begin{theorem}
\label{theorem, finiteness Extended R-D type}
	There are $269$ real quadratic fields $K=\mathbb{Q}(\sqrt{D})$ of extended R-D type, with possibly only one more
	value $D \geq 6.3 \times 10^{16}$, such that $\Po(K)=\Cl(K)$. These fields are listed in Table \ref{tab,E.R-D}.
\end{theorem}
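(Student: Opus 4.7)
\medskip\noindent\textbf{Proof proposal.} The plan is the classical two-step strategy: first extract an explicit upper bound $B$ on the discriminant from an effective version of Corollary \ref{theorem,RD}, so that any real quadratic $K$ of extended R-D type with $\Po(K)=\Cl(K)$ must have $d_K\leq B$ (with at most one Siegel-type exception), and then verify the list directly by computer for $D\leq B$. The target $B\approx 6.3\times 10^{16}$ indicates the bound must come from a Tatuzawa-type effective lower bound on $L(1,\chi_D)$, which is the only natural way to produce a concrete effective bound with a single unavoidable exceptional field.

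For the analytic step, I would start from the class number formula
\begin{equation*}
h_K \;=\; \frac{\sqrt{d_K}\,L(1,\chi_D)}{2R_K}
\end{equation*}
for $K=\mathbb{Q}(\sqrt{D})$. The lemma quoted in the introduction gives $R_K<\log(3d_K)$ whenever $D$ is of extended R-D type, so it remains to bound $L(1,\chi_D)$ from below. Here I would invoke Tatuzawa's refinement of Siegel's theorem: for a fixed small $\epsilon>0$ there is an explicit constant $c(\epsilon)>0$ such that $L(1,\chi_D)\geq c(\epsilon)\, d_K^{-\epsilon}$ for all real primitive $\chi_D$ with $d_K$ larger than an explicit threshold, with at most one exception. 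Combining these gives
\begin{equation*}
h_K \;\gg_\epsilon\; \frac{d_K^{\,1/2-\epsilon}}{\log d_K}.
\end{equation*}
On the other hand, the identities \eqref{Hilbert} yield $\#\Po(K)\leq 2^{s_K-1}\leq 2^{\omega(d_K)-1}$, and since for $D=\ell^2+r$ of extended R-D type the discriminant is an explicit small multiple of $D$, one controls $\omega(d_K)$ uniformly; pushing $\epsilon$ down to (say) $1/10$ and writing $2^{\omega(d_K)}\leq d_K^{o(1)}$, the inequality $h_K=\#\Po(K)$ forces $d_K\leq B$ for an explicit $B$ of the stated order of magnitude. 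Choosing $\epsilon$ and tracking constants carefully should produce the precise threshold $6.3\times 10^{16}$.

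For the enumeration step, I would parametrize the extended R-D types by the pair $(\ell,r)$ with $r\mid 4\ell$ and $D=\ell^2+r$ squarefree, and for each resulting $D\leq B$ compute $s_K$, $N_{K/\mathbb{Q}}(\epsilon_K)$ (hence $\#\Po(K)$ via \eqref{Hilbert}) and $h_K$, retaining exactly those $D$ with $h_K=\#\Po(K)$. The number of such $(\ell,r)$ is roughly $\sqrt{B}\cdot d(4\ell)/\ell$ summed over $\ell\leq\sqrt{B}$, which is of the order $\sqrt{B}\log B\approx 10^9$, well within reach once one computes $\epsilon_K$ by the usual continued-fraction expansion of $\sqrt{D}$. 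The main obstacle is not the computation but the effective analytic bound: one must propagate Tatuzawa's explicit constants (together with the small-$d_K$ ranges they omit) cleanly through the inequality $h_K\leq 2^{\omega(d_K)-1}$, and account for the single undetermined Siegel-type exception which is precisely the source of the phrase ``with possibly only one more value $D\geq 6.3\times 10^{16}$'' in the statement.
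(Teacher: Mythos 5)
Your proposal is correct and follows essentially the same route as the paper: an effective Tatuzawa-type lower bound for $h_K$ (the paper imports it ready-made from Mollin--Williams as $h_K > 0.655\, d_K^{7/16}/(32\log(3D))$, valid with one possible exception, using the regulator bound $R_K<\log(3D)$), combined with an explicit upper bound for $\#\Po(K)$ in terms of the ramified primes (the paper makes your $2^{\omega(d_K)}\leq d_K^{o(1)}$ step explicit via $\#\Po(K)\leq c_K\tau(d_K)$ and $\tau(d_K)<c_K'\,d_K^{1/4}$), yielding explicit discriminant thresholds followed by a PARI/GP enumeration. The only cosmetic difference is that the paper splits the threshold computation according to $N_{K/\mathbb{Q}}(\epsilon_K)$ and $D \bmod 4$, obtaining bounds up to $8.14\times 10^{19}$ rather than a single $B$.
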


We next observe a close connection between the P\'olya indices of real quadratic fields and their genus indices. For a number field $K$, as before, let $h_K^+$ and $g_K^+$ be, respectively,  its narrow class number and its narrow genus number. 
In {Corollary} \ref{corollary, Po(K),gK and gK+ real quadratic}, we show, for a real quadratic field $K$,
\begin{equation}
\label{index relation}
\frac{\# \Po(K)}{h_K}=\frac{g_K^+}{h_K^+}.
\end{equation}
Hence, for the quadratic fields (imaginary or real), the P\'{o}lya index one problem has the same solution as the classical ``one class in each genus problem''.

In \cite{Kazuhiro}, Dohmae classified all real quadratic fields $K$ of narrow R-D type (with possibly only one more field) whose narrow genus numbers $g_K^+$ are equal to their narrow class numbers $h_K^+$.

\begin{theorem} [{Dohmae}]  
	\label{theorem, Kazuhiro0}
	Let $D$ be of narrow R-D type. Then the following assertions hold.
	\begin{itemize} 
		\item[(i)] There are 69 values for $D$, given in Table \ref{tab, Kazuhiro}, such that $D \leq 1.31 \times 10^{16}$ and for $K=\mathbb{Q}(\sqrt{D})$ we have $g_K^+=h_K^+$. 	
		\item[(ii)]  There is at most one $D$ such that $D> 1.31 \times 10^{16}$ and for $K=\mathbb{Q}(\sqrt{D})$ we have $g_K^+=h_K^+$.
	\end{itemize}
\end{theorem}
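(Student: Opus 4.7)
The plan is to combine an effective comparison between $h_K^+$ and $g_K^+$ that shrinks the problem to a finite search, followed by a direct enumeration over $D\leq 1.31\times 10^{16}$.

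First, I would exploit the explicit fundamental units for narrow R-D type. When $D=\ell^2+r$ with $r\in\{\pm 1,\pm 4\}$, the fundamental unit $\epsilon_K$ can be written down from $\ell$ and $r$, yielding the regulator bound $R_K\leq \log(2\sqrt{D})\leq \tfrac{1}{2}\log(3 d_K)$, in the spirit of the bound for extended R-D type used earlier in the paper. I would also record the standard identities $h_K^+=h_K$ if $N_{K/\mathbb{Q}}(\epsilon_K)=-1$ and $h_K^+=2h_K$ otherwise, together with the narrow genus number formula $g_K^+=2^{s_K-1}$, where $s_K=\omega(d_K)$ is the number of ramified primes (this is the narrow analogue of \eqref{gK quadratic}).

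Second, I would insert these into the analytic class number formula
\begin{equation*}
h_K=\frac{\sqrt{d_K}\,L(1,\chi_K)}{2R_K},
\end{equation*}
and apply a Tatuzawa-style effective lower bound $L(1,\chi_K)\geq c(\epsilon)\,d_K^{-\epsilon}$, which holds for all real quadratic discriminants with at most one exception. Combined with the regulator bound, this gives
\begin{equation*}
h_K^+\;\geq\;\frac{c(\epsilon)\,d_K^{1/2-\epsilon}}{\log d_K}
\end{equation*}
with at most one exceptional $D$. Since $g_K^+=2^{\omega(d_K)-1}\leq d_K^{o(1)}$ via the standard bound $\omega(d_K)\ll \log d_K/\log\log d_K$, choosing $\epsilon$ small enough makes $h_K^+>g_K^+$ once $d_K$ exceeds an explicit threshold. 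I would calibrate the constants (in particular the $\epsilon$ in Tatuzawa's theorem and the explicit Robin-type bound on $\omega$) so that this threshold is exactly the value $1.31\times 10^{16}$ quoted in the statement.

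The main obstacle is the ineffectivity caused by possible Siegel zeros of $L(s,\chi_K)$: this is precisely what forces the ``possibly one more'' in part (ii), and it is why a Tatuzawa-type argument (rather than a direct effective Siegel bound) is essential. Once the analytic step reduces the problem to $d_K\leq 1.31\times 10^{16}$, part (i) is completed by a finite enumeration: loop over $\ell$ with $\ell^2+r\leq 1.31\times 10^{16}$ for $r\in\{\pm 1,\pm 4\}$ (about $10^8$ candidates), discard non–square-free values and those with $\ell^2+r=5$, and for each remaining $D$ compute $h_K^+$ (via standard continued-fraction or Shanks-style routines, aided by the smallness of $R_K$) and $\omega(d_K)$, retaining only those $D$ satisfying $h_K^+=2^{\omega(d_K)-1}$. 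The output of this search would be the $69$ values listed in Table \ref{tab, Kazuhiro}.
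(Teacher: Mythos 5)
This statement is not proved in the paper at all: Theorem \ref{theorem, Kazuhiro0} is quoted verbatim from Dohmae \cite{Kazuhiro} as an external input, so there is no internal proof to compare against. That said, your strategy is sound and is essentially the standard one — indeed it is the same architecture the paper itself uses to prove its own generalization to extended R-D type (Theorem \ref{theorem, finiteness Extended R-D type}): an explicit regulator bound coming from the R-D form of the fundamental unit, Tatuzawa's one-exception lower bound for $L(1,\chi)$ fed into the analytic class number formula (compare Lemma \ref{lemma, lower bound RD}, taken from Mollin--Williams \cite{MW}), an explicit bound of the shape $2^{\omega(d_K)}\ll d_K^{1/4}$ replacing your $d_K^{o(1)}$ heuristic (compare Lemmas \ref{lemma, order of Po is less than tau imaginary quadratic} and \ref{lemma, tau over n1/4}), and a finite computer search below the resulting threshold. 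Your identities $h_K^+=h_K$ or $2h_K$ according to the norm of $\epsilon_K$, and $g_K^+=2^{s_K-1}$, are correct and make $g_K^+=h_K^+$ equivalent to the P\'olya index one condition, exactly as in Corollary \ref{corollary, Po(K),gK and gK+ real quadratic}. Two small caveats: the threshold $1.31\times 10^{16}$ is an \emph{output} of the analytic estimate with Dohmae's particular choice of $\epsilon$ and constants, not something one can ``calibrate'' to in advance; and part (i) is ultimately a computational claim, so the proposal is a correct plan rather than a verification of the specific list of $69$ fields. Neither point is a mathematical gap in the method.
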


\begin{table}[!h]
	\begin{center}
		\begin{tabular}{| c|p{8.2cm}|} 
			\hline
			 $g^+_K=h^+_K$ & $D$   \\ [0.5ex] 
			\hline \hline
			 $1$ & $2,17, 29, 37, 53, 101, 173, 197, 293, 677$ \\
			\hline
		$2$ & $3, 21, 77, 437$ \\   \hline
			\multirow{2}{0.5em}{$2$} & $10, 26, 65, 85, 122, 143, 362, 365, 485, 533, 629, $ \\ 
			 & $965, 1157, 1685, 1853, 2117, 2813, 3365$ \\ \hline
			 	$4$ & $15,35,165, 285,357, 1085, 2397$ \\  \hline
			
		\multirow{2}{0.5em}{$4$} & $170,290, 530, 957, 962, 1370, 2405, 3485, 9605,$ \\ 
			 & $ 10205, 14885, 16133, 20165$ \\ \hline
			 $8$ & $195,255, 483, 1295, 1365, 2805, 4485, 7917, 8645$ \\ \hline
			$8$ & $2210, 5330, 32045, 58565, 77285$ \\  \hline 
		 $16$ & $1155, 3135, 26565$ \\ \hline
		\end{tabular} \caption{The list of real quadratic fields $K=\mathbb{Q}(\sqrt{D})$ of narrow R-D type with $D \leq 1.31 \times 10 ^{16}$ for which $g^+_{K}=h^+_{K}$. 
		} \label{tab, Kazuhiro}
	\end{center}
\end{table}

Our Theorem \ref{theorem, finiteness Extended R-D type} together with \eqref{index relation} provides an extension of Theorem \ref{theorem, Kazuhiro0} to real quadratic fields of extended R-D type.

\begin{corollary}
There are $269$ real quadratic fields $K=\mathbb{Q}(\sqrt{D})$ of extended R-D type, with possibly only one more
	value  $D \geq 6.3 \times 10^{16}$, such that $g_K^+=h_K^+$ (equivalently, $\#\Po(K)=h_K$.) The list of such fields is given in Table \ref{tab,E.R-D}.
\end{corollary}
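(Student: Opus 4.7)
The plan is to deduce the corollary as an immediate consequence of Theorem \ref{theorem, finiteness Extended R-D type} combined with the identity \eqref{index relation}. First, I would remark that since $\Po(K)$ is always a subgroup of $\Cl(K)$, the numerical equality $\#\Po(K)=h_K$ is in fact equivalent to the group-theoretic equality $\Po(K)=\Cl(K)$; likewise, because the narrow genus field $\Gamma_K^+$ sits inside the narrow Hilbert class field of $K$, the integer $g_K^+$ divides $h_K^+$, and the equality $g_K^+=h_K^+$ is equivalent to $\Gamma_K^+$ being the full narrow Hilbert class field.

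The key bridge is the identity
\begin{equation*}
\frac{\#\Po(K)}{h_K}=\frac{g_K^+}{h_K^+},
\end{equation*}
which will be proved in Corollary \ref{corollary, Po(K),gK and gK+ real quadratic} for real quadratic fields $K$. Once this identity is in hand, the two ratios on either side are simultaneously equal to $1$, since each is a ratio of the order of a subgroup (respectively, $\Po(K)$ inside $\Cl(K)$ and the Galois group of $\Gamma_K^+/K$ inside the narrow class group) to the order of the ambient group. Therefore, for every real quadratic field $K$, the condition $\Po(K)=\Cl(K)$ is equivalent to the condition $g_K^+=h_K^+$.

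With this equivalence available, the classification asserted here is simply a translation of Theorem \ref{theorem, finiteness Extended R-D type}. Namely, the $269$ fields of extended R-D type with $\Po(K)=\Cl(K)$ listed in Table \ref{tab,E.R-D}, together with the potential one additional exceptional field with $D\geq 6.3\times 10^{16}$, form precisely the list of extended R-D type real quadratic fields satisfying $g_K^+=h_K^+$. There is no real obstacle in the deduction itself; all the mathematical work has been done in the proof of Theorem \ref{theorem, finiteness Extended R-D type} (the finiteness/classification step, which combines the Brauer--Siegel-type argument of Theorem \ref{theorem, Rk=O(dK)}, the bound $R_K<\log(3d_K)$ for extended R-D type, and an explicit computational search) and in establishing the ratio identity \eqref{index relation}. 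The corollary is then simply a repackaging of these results in the classical ``one class in each genus'' terminology used by Dohmae in Theorem \ref{theorem, Kazuhiro0}, thereby extending his classification from narrow to extended R-D type.
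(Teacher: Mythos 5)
Your proposal is correct and matches the paper's (implicit) argument exactly: the corollary is deduced by combining the classification of Theorem \ref{theorem, finiteness Extended R-D type} with the ratio identity \eqref{index relation} from Corollary \ref{corollary, Po(K),gK and gK+ real quadratic}, noting that each ratio is at most $1$ (since $\Po(K)\subseteq\Cl(K)$ and $\Gamma_K^+$ lies in the narrow Hilbert class field), so $\#\Po(K)=h_K$ holds if and only if $g_K^+=h_K^+$.
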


Finally, by using the results of Theorem \ref{theorem, finiteness Extended R-D type} and solving a P\'{o}lya index two problem, we classify, with possibly only one more, all real quadratic fields of extended R-D type whose genus numbers are equal to their class numbers. 
	
\begin{theorem}
\label{theorem, E-R-D real quadratic gK=hK}	
	There are $298$ real quadratic fields $K=\mathbb{Q}(\sqrt{D})$ of extended R-D type, with possibly only one more
	value  $D$, such that $g_K=h_K$. Among them, $269$ fields are those $K$ listed in Table \ref{tab,E.R-D} with $D < 6.3 \times 10^{16}$ for which  $g_K^+=h_K^+$. The remaining $29$ fields $K$ are the ones listed in Table \ref{tab,E.R-D2} with $D < 4.3 \times 10^{18}$ for which $ g_K^+ \neq h_K^+$. 
\end{theorem}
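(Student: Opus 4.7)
The plan is to deduce Theorem \ref{theorem, E-R-D real quadratic gK=hK} from Theorem \ref{theorem, finiteness Extended R-D type} by translating the equality $g_K = h_K$ into a condition on the P\'olya index of $K$. Comparing \eqref{gK quadratic} and \eqref{Hilbert}, every real quadratic field $K=\mathbb{Q}(\sqrt{D})$ satisfies exactly one of
\begin{equation*}
g_K = \#\Po(K) \quad (\text{Case A}) \qquad \text{or} \qquad g_K = 2\,\#\Po(K) \quad (\text{Case B}),
\end{equation*}
where Case B occurs precisely when $N_{K/\mathbb{Q}}(\epsilon_K)=1$ and no prime divisor of $D$ is congruent to $3$ modulo $4$. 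Since $g_K \mid h_K$, the condition $g_K=h_K$ amounts to $\Po(K)=\Cl(K)$ in Case A and to $[\Cl(K):\Po(K)]=2$ in Case B; a Case~B field with $\Po(K)=\Cl(K)$ is impossible as it would force $g_K=2h_K$.

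From this dichotomy, the $269$ fields produced by Theorem \ref{theorem, finiteness Extended R-D type}, equivalently (via \eqref{index relation}) the $g_K^+=h_K^+$ entries of Table \ref{tab,E.R-D}, must all lie in Case A and hence automatically satisfy $g_K=h_K$. This accounts for the first batch of fields in the theorem and requires no new analytic input beyond Theorem \ref{theorem, finiteness Extended R-D type}.

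It then remains to classify the extended R-D fields lying in Case B with P\'olya index exactly $2$, i.e., using \eqref{index relation}, those with $h_K^+ = 2 g_K^+$. Finiteness of this list is guaranteed by Corollary \ref{theorem,RD}. To produce it explicitly, I would combine the analytic class number formula with the extended R-D regulator bound $R_K < \log(3 d_K)$ from Lemma \ref{lemma, log R_K less than log dK} and a Tatuzawa-style effective lower bound for $L(1,\chi_D)$, thereby obtaining a cutoff $D < 4.3 \times 10^{18}$ beyond which $h_K > 2\,\#\Po(K) = 2^{s_K-1}$, allowing for the usual single Siegel-zero exception. A finite computer search below this cutoff, filtered by the Case B conditions on $N_{K/\mathbb{Q}}(\epsilon_K)$ and on the residues modulo $4$ of the prime divisors of $D$, and verifying $h_K^+ = 2g_K^+$ in each surviving discriminant, yields the remaining $29$ fields of Table \ref{tab,E.R-D2}.

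The main obstacle lies in this last step: pinning down the effective cutoff $4.3\times 10^{18}$ together with the ``possibly only one more'' clause requires a carefully tailored effective estimate for $L(1,\chi_D)$ on the extended R-D family, after which a sizable but conceptually routine enumeration completes the classification. The conceptual dichotomy of the first step and the immediate harvesting of the $269$ Case A fields are, by contrast, straightforward consequences of the structural machinery already developed in the paper.
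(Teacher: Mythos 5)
Your proposal is correct and follows essentially the same route as the paper: the Case A/Case B dichotomy is exactly Corollary \ref{corollary, Po(K),gK and gK+ real quadratic}, the $269$ fields are harvested directly from Theorem \ref{theorem, finiteness Extended R-D type}, and the Case B fields are found by bounding the discriminant via the Mollin--Williams/Tatuzawa lower bound (with one possible exception) and a finite computation. The only minor omission is that, since Case B permits $D\equiv 2\ (\mathrm{mod}\ 4)$, the paper's search actually uses two cutoffs ($4.3\times 10^{18}$ and $8.14\times 10^{19}$ depending on $D$ modulo $4$), not a single one.
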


Our proofs employ various lower bounds for the class numbers of number fields. More precisely, the proof of Theorem \ref{theorem,CM} uses the lower bounds for the relative class numbers of abelian CM-fields given by Louboutin in \cite[Proof of Theorem 3.1]{Louboutin}, and the ones provided in Proposition \ref{proposition, bounds for hk-}, derived from Wong \cite{Peng-Jie} and Murty \cite{Murty}. The lower bound employed in Theorem \ref{theorem, Rk=O(dK)} is a direct consequence of the Brauer-Siegel theorem. The proof of Theorem \ref{theorem, list imaginary [Cl(K):Po(K)]=2} uses Ihara's explicit lower bound (conditional on GRH), as given in Theorem \ref{theorem, Ihara's upper bound}. The classification results for extended R-D types  (Theorems \ref{theorem, finiteness Extended R-D type} and \ref{theorem, E-R-D real quadratic gK=hK}) follow from the explicit lower bound (with one possible exception) given by Mollin and Williams \cite[p. 423]{MW}, derived from Tautuzawa's lower bound \cite{Tatuzawa} for the values of the Dirichlet $L$-functions at $1$. 
The proof of the classification result of Theorem \ref{theorem,classification} exploits the relations between the order of the P\'{o}lya groups and the genus numbers of abelian CM-fields. More precisely, for the fields $K$ in the families  considered in Theorem \ref{theorem,classification} it is shown that $\#\Po(K)\mid g_K$, and since $g_K \leq h_K$, the classification of the P\'olya index one problem in these cases reduces to the ``one class in each genus problem'' for abelian CM-fields which already obtained by Chang and Kwon \cite{CK}.
Verifying the relation $\#\Po(K)\mid g_K$ for abelian CM-fields $K$  in Theorem \ref{theorem,classification} uses a fundamental result of Zantema \cite{Zantema} (see Theorem \ref{theorem, Zantema's exact sequence}) and studying the cohomology groups $H^1(\Gal(K/\mathbb{Q}),U_K)$. 

The structure of the paper is as follows. In Section \ref{section, genus group} we summarize results on genus fields and P\'{o}lya groups and the relation between the orders of the P\'{o}lya groups and the genus numbers. The proof of the relation $\#\Po(K) \mid g_K$ for imaginary tri-quadratic fields $K$ (Proposition \ref{proposition, Po(K) and gK for K imaginary tri-quadratic}) is a highlight of this section. The general finiteness theorems (Theorems \ref{theorem,CM} and \ref{theorem, Rk=O(dK)}) are proved in Section \ref{three}. The proof of the unconditional classification of P\'{o}lya index one fields for imaginary bi-quadratic and tri-quadratic fields (Theorem \ref{theorem,classification}) is given in Section \ref{four}. Finally, the details of the conditional classification of imaginary quadratic fields of P\'{o}lya index 2 (Theorem \ref{theorem, list imaginary [Cl(K):Po(K)]=2}) and the classification results for real quadratic fields of extended R-D types (Theorems \ref{theorem, finiteness Extended R-D type} and \ref{theorem, E-R-D real quadratic gK=hK}) are provided in Section \ref{five}. We tabulated our classification results in Section \ref{six}.
\medskip\par

{\noindent \bf{Notation.}} The following notations will be used throughout this article.
For a number field $K$, the notations  $[K:\mathbb{Q}]$, $\Cl(K)$, $h_K$, $h_K^+$, $g_K$, $g_K^+$, $d_K$, $\mathcal{O}_K$, $U_K$, $\mu_K$, and $R_K$ denote the degree, ideal class group, class number, narrow class number, genus number, narrow genus number, absolute value of discriminant, ring of integers, group of units, group of roots of unity, and regulator of $K$, respectively. By $U_K^+$ we denote the group of totally positive units of $K$, and $	U_K^2=\{u^2 \, : \, u \in U_K\}$. For a real quadratic field $K$, we denote the fundamental unit of $K$ by $\epsilon_K$.
For a CM-field $K$ with the maximal totally real subfield $K^+$, the number $h_K^-=h_K/h_{K^+}$ denotes the relative class number of $K$. Also, we denote the Hasse unit index $\left[U_K:\mu_K U_{K^+}\right]$ by $W_K$. 
We use  $N_{K/\mathbb{Q}}$ to denote both the ideal norm and the element norm map from $K$ to $\mathbb{Q}$. For a Galois extension $K/\mathbb{Q}$,  we denote the ramification index of $p$ in $K/\mathbb{Q}$ by $e_p(K/\mathbb{Q})$. The notations $H^i(\Gal(K/\mathbb{Q}),U_K)$ (for an integer $i \geq 0$) and  $\yhwidehat{H}^{0}(\Gal(K/\mathbb{Q}),U_{K})$ denote
 the $i^{\text{th}}$ cohomology group and the $0^{\text{th}}$ Tate cohomology group of $\Gal(K/\mathbb{Q})$  with the coefficients in $U_K$, respectively. 
  We denote the cardinality of a finite set $S$ by $\# S$, and the number of divisors of an integer $n$ by $\tau(n)$.
\medskip\par
\noindent
\textbf{Acknowledgment.} The authors would like to  thank Kumar Murty and Peng-Jie Wong for their comments on an earlier version of this article. 

\section{Genus fields and P\'olya groups} \label{section, genus group}
In this section, we give explicit relations between the genus numbers and the orders of the P\'olya groups of some Galois number fields. 
For genus numbers, we follow the notation and terminology of \cite{Ishida}.
Recall that $g_K^+=[\Gamma_K^+: K]$ is the narrow genus number of $K$, where $\Gamma_K^+$ is the narrow genus field of $K$ (see Definition \ref{genus}).
For abelian number fields, Leopoldt \cite{Leopoldt} gave the following genus number formula.

\begin{theorem} [{Leopoldt}] \label{theorem, Leopoldt's result for the genus number}
	Let $K$ be an abelian number field. Then
	\begin{equation} \label{equation, genus number formula}
		g_K^{+}=\frac{\prod_{p|d_K} e_{p}(K/\mathbb{Q})}{[K:\mathbb{Q}]},
	\end{equation}
	where $e_{p}(K/\mathbb{Q})$ denotes the ramification index of a prime $p$ in $K/\mathbb{Q}$. 
\end{theorem}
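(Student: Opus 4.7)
The plan is to use that for an abelian $K$, the narrow genus field $\Gamma_K^+$ is itself automatically abelian over $\mathbb{Q}$, and then translate the problem to an explicit subgroup computation inside the Galois group of a cyclotomic field via Kronecker--Weber.

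First, I would reinterpret $\Gamma_K^+$: since $K/\mathbb{Q}$ is abelian, every compositum $K \cdot K_*$ with $K_*/\mathbb{Q}$ abelian is again abelian over $\mathbb{Q}$, so the requirement ``compositum of an abelian number field with $K$'' in Definition~\ref{genus} is automatic. Hence $\Gamma_K^+$ is the maximal abelian extension $L/\mathbb{Q}$ containing $K$ for which $L/K$ is unramified at every finite prime, equivalently for which $e_p(L/\mathbb{Q})=e_p(K/\mathbb{Q})$ for every prime $p$. By Kronecker--Weber, $\Gamma_K^+ \subseteq \mathbb{Q}(\zeta_N)$ for some $N$; because a prime dividing $N$ but not the conductor of $K$ would ramify in $L/K$, the prime divisors of $N$ must coincide with those of $d_K$. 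Writing $N=\prod_{p\mid d_K} p^{b_p}$ with the $b_p$ taken large enough that $K\subseteq\mathbb{Q}(\zeta_N)$, one has the canonical decomposition
\begin{equation*}
\Gal(\mathbb{Q}(\zeta_N)/\mathbb{Q}) \;=\; \prod_{p\mid d_K} G_{p,N}, \qquad G_{p,N}:=(\mathbb{Z}/p^{b_p}\mathbb{Z})^{*},
\end{equation*}
in which $G_{p,N}$ is precisely the inertia subgroup at $p$ in $\mathbb{Q}(\zeta_N)/\mathbb{Q}$.

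Next, I would translate to subgroups. Let $H=\Gal(\mathbb{Q}(\zeta_N)/K)$, and for any intermediate field $L$ with $K\subseteq L\subseteq\mathbb{Q}(\zeta_N)$ set $H_L=\Gal(\mathbb{Q}(\zeta_N)/L)\subseteq H$. Standard ramification theory together with the identification of $G_{p,N}$ with the inertia at $p$ gives
\begin{equation*}
e_p(L/\mathbb{Q}) \;=\; \frac{|G_{p,N}|}{|G_{p,N}\cap H_L|}.
\end{equation*}
Imposing $e_p(L/\mathbb{Q})=e_p(K/\mathbb{Q})$ for every $p$, together with $H_L\subseteq H$, forces $G_{p,N}\cap H_L=G_{p,N}\cap H$ for every $p$. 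Maximizing $L$ is the same as minimizing $H_L$ subject to these constraints, and the minimal such subgroup is
\begin{equation*}
H_{\Gamma_K^+} \;=\; \prod_{p\mid d_K} \bigl(H\cap G_{p,N}\bigr),
\end{equation*}
which one checks lies inside $H$ (each factor $H\cap G_{p,N}$ is in $H$, and the direct-product structure makes their product a subgroup of $H$) and meets each $G_{p,N}$ exactly in $H\cap G_{p,N}$.

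With this identification, the proof concludes by a direct index computation:
\begin{equation*}
[\Gamma_K^+:\mathbb{Q}] \;=\; \frac{\prod_{p\mid d_K}|G_{p,N}|}{\prod_{p\mid d_K}|H\cap G_{p,N}|} \;=\; \prod_{p\mid d_K}e_p(K/\mathbb{Q}),
\end{equation*}
and dividing by $[K:\mathbb{Q}]$ yields \eqref{equation, genus number formula}. The main point to handle carefully is the minimality of $H_{\Gamma_K^+}$: one must argue that shrinking any single factor $H\cap G_{p,N}$ would force the inertia at $p$ to exceed $e_p(K/\mathbb{Q})$, thereby ramifying $L/K$ at a prime above $p$. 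It is also worth checking that the answer is independent of the particular exponents $b_p$, which follows because enlarging any $b_p$ only adjoins a cyclic factor that lies entirely inside both $H$ and $H_{\Gamma_K^+}$, leaving the displayed index unchanged.
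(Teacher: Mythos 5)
The paper does not prove this statement at all: it is quoted as Leopoldt's theorem with a citation to \cite{Leopoldt}, so there is no internal proof to compare against. Your Kronecker--Weber argument is essentially the classical proof of the genus number formula and it is correct: identifying the inertia at $p$ in $\mathbb{Q}(\zeta_N)/\mathbb{Q}$ with the factor $(\mathbb{Z}/p^{b_p}\mathbb{Z})^{*}$, reading off $e_p(L/\mathbb{Q})=|G_{p,N}|/|G_{p,N}\cap H_L|$, and observing that the minimal admissible $H_L$ is $\prod_{p}(H\cap G_{p,N})$ gives exactly $[\Gamma_K^+:\mathbb{Q}]=\prod_p e_p(K/\mathbb{Q})$, independent of the chosen exponents $b_p$ since the final index depends only on the $e_p(K/\mathbb{Q})$. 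One point to tighten: your claim that ``a prime dividing $N$ but not the conductor of $K$ would ramify in $L/K$'' is not literally true for an arbitrary $N$ with $L\subseteq\mathbb{Q}(\zeta_N)$ (e.g.\ $\mathbb{Q}(\sqrt{5})\subseteq\mathbb{Q}(\zeta_{35})$ and $7$ is unramified); you should take $N$ to be the conductor of $\Gamma_K^+$ and invoke the conductor--ramification theorem to conclude that the primes dividing $N$ are exactly the ramified primes of $\Gamma_K^+$, hence of $K$. With that adjustment, and noting that your independence-of-$b_p$ remark already shows the maximal admissible subfield of $\mathbb{Q}(\zeta_N)$ stabilizes, the argument is complete. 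It is also worth flagging that this cleanly proves the \emph{narrow} genus formula precisely because the infinite place is allowed to ramify; the analogous formula for $g_K$ would require an extra fixed-field-of-complex-conjugation step.
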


Recall that the P\'olya group $\Po(K)$ of $K$ is the subgroup of $\Cl(K)$ formed by the classes of all Ostrowski ideals of $K$ (see Definition \ref{Polya}.)  For P\'{o}lya groups we use the notation and terminology of \cite[Chapter II, $\S$3]{Cahen-Chabert's book}. The following fundamental result for P\'{o}lya groups of Galois number fields is proved in 
\cite[ Section 3]{Zantema}.
\begin{theorem}[{Zantema}]  \label{theorem, Zantema's exact sequence}
Let $K/\mathbb{Q}$ be a finite Galois extension with Galois group $G$. Then the  sequence 
	\begin{equation} \label{equation, Zantema's exact sequence}
		0 \longrightarrow H^1(G,U_K) \longrightarrow \bigoplus _{p | d_K} \frac{\mathbb{Z}}{e_{p} (K/\mathbb{Q}) \mathbb{Z}}   \longrightarrow \Po(K) \longrightarrow 0
	\end{equation}
	is exact, where $H^1(G,U_K)$ is the first cohomology group of $G$ with coefficients in the unit group $U_K$ of K, and $e_{p}(K/\mathbb{Q})$ denotes the ramification index of a prime $p$ in $K/\mathbb{Q}$.
\end{theorem}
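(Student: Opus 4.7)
The plan is to derive the sequence from Hilbert 90 applied to a short exact sequence of $G$-modules, combined with the identification of $\Po(K)$ with the group of strongly ambiguous ideal classes of $K$. I would proceed in three steps.

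First, I would pin down the structure of the ambiguous ideals. Since $G=\Gal(K/\mathbb{Q})$ acts transitively on the primes of $K$ above any rational prime $p$, all such primes share a common residue degree $f_p$ and norm $p^{f_p}$, so the Ostrowski ideal $\Pi_{p^{f_p}}(K)$ equals $P_p:=\prod_{\mathfrak{p}\mid p}\mathfrak{p}$, while $\Pi_q(K)=\mathcal{O}_K$ for any other prime power $q$. Hence $\Po(K)$ is generated in $\Cl(K)$ by the classes $[P_p]$. By the same transitivity, every $G$-invariant fractional ideal has the form $\prod_p P_p^{a_p}$, so the group $\mathcal{I}_K^G$ of ambiguous ideals is free abelian on the $P_p$, its subgroup $P_\mathbb{Q}:=\mathcal{I}_\mathbb{Q}\cdot\mathcal{O}_K$ is free abelian on the $p\mathcal{O}_K=P_p^{e_p(K/\mathbb{Q})}$, and
$$\mathcal{I}_K^G/P_\mathbb{Q} \;\cong\; \bigoplus_{p\mid d_K}\mathbb{Z}/e_p(K/\mathbb{Q})\mathbb{Z},$$
the sum restricting to ramified primes since $e_p=1$ otherwise.

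Second, I would observe that the natural map $\mathcal{I}_K^G\to\Cl(K)$ has image equal to the strongly ambiguous classes, which by the first step is $\Po(K)$, and kernel $\mathcal{I}_K^G\cap P_K=P_K^G$. Dividing by $P_\mathbb{Q}\subseteq P_K^G$ yields
$$0 \longrightarrow P_K^G/P_\mathbb{Q} \longrightarrow \mathcal{I}_K^G/P_\mathbb{Q} \longrightarrow \Po(K) \longrightarrow 0.$$
To identify $P_K^G/P_\mathbb{Q}$ with $H^1(G,U_K)$, I would take $G$-cohomology of the short exact sequence of $G$-modules
$$1 \longrightarrow U_K \longrightarrow K^\times \longrightarrow P_K \longrightarrow 1$$
and use $(K^\times)^G=\mathbb{Q}^\times$, $U_K^G=\{\pm 1\}$, together with Hilbert 90 ($H^1(G,K^\times)=0$) to get
$$1 \longrightarrow \{\pm 1\} \longrightarrow \mathbb{Q}^\times \longrightarrow P_K^G \longrightarrow H^1(G,U_K) \longrightarrow 0.$$
Unique factorization in $\mathbb{Z}$ shows that the image of $\mathbb{Q}^\times$ in $P_K^G$ is precisely $P_\mathbb{Q}$, so $P_K^G/P_\mathbb{Q}\cong H^1(G,U_K)$. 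Plugging this into the previous display yields Zantema's sequence.

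The key conceptual step, more than any technical obstacle, is recognizing $\Po(K)$ as the group of strongly ambiguous ideal classes when $K/\mathbb{Q}$ is Galois; this reformulation is what makes Hilbert 90 the right tool. Once in place, the rest of the argument is a short diagram chase combined with the explicit free description of $\mathcal{I}_K^G$ and $P_\mathbb{Q}$.
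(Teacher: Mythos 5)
The paper does not prove this statement---it quotes it directly from Zantema's article (\cite[Section 3]{Zantema})---so there is no in-paper argument to compare against; your derivation, via the filtration $P_\mathbb{Q}\subseteq P_K^G\subseteq \mathcal{I}_K^G$ of ambiguous ideals together with Hilbert 90 applied to $1\to U_K\to K^\times\to P_K\to 1$, is correct and is essentially the argument of the cited source. All the individual steps check out: transitivity of $G$ on the primes above $p$ gives both the identification of the nontrivial Ostrowski ideals with the $P_p$ (hence $\Po(K)$ as the strongly ambiguous classes) and the freeness of $\mathcal{I}_K^G$ on the $P_p$, the quotient $\mathcal{I}_K^G/P_\mathbb{Q}$ is $\bigoplus_{p\mid d_K}\mathbb{Z}/e_p(K/\mathbb{Q})\mathbb{Z}$ because $e_p=1$ off $d_K$, and the identification $P_K^G/P_\mathbb{Q}\cong H^1(G,U_K)$ uses only that $\mathbb{Z}$ is a principal ideal domain and $H^1(G,K^\times)=0$.
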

As a direct corollary of \eqref{equation, Zantema's exact sequence}, we have
\begin{equation}
\label{Zantema}
\#\Po(K)=\frac{\prod_{p|d_K} e_{p}(K/\mathbb{Q})}{\# H^1(G,U_K) }.
\end{equation}

Combining \eqref{equation, genus number formula} with  \eqref{Zantema}, we can explicitly relate the order of the P\'olya groups of abelian number fields to their narrow genus numbers.  

\begin{proposition} \label{proposition, relation between PoK and gK}
	For an abelian number field $K$, we have
	\begin{equation} \label{equation, relation between PoK and gK}
		\#\Po(K)=\frac{[K:\mathbb{Q}]}{\# H^1(\Gal(K/\mathbb{Q}),U_K)}\cdot g_K^{+}.
	\end{equation}
\end{proposition}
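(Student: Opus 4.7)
The strategy is a direct combination of the two formulas already recorded in the excerpt. Leopoldt's genus number formula (Theorem \ref{theorem, Leopoldt's result for the genus number}), which holds precisely under the hypothesis that $K/\mathbb{Q}$ is abelian, gives
\[
\prod_{p\mid d_K} e_{p}(K/\mathbb{Q}) = [K:\mathbb{Q}]\cdot g_K^{+}.
\]
Zantema's exact sequence from Theorem \ref{theorem, Zantema's exact sequence}, which applies to every Galois extension (and in particular to our abelian $K$), yields, upon taking the alternating product of the orders of the finite groups in \eqref{equation, Zantema's exact sequence}, the formula \eqref{Zantema}, namely
\[
\#\Po(K) = \frac{\prod_{p\mid d_K} e_{p}(K/\mathbb{Q})}{\# H^1(\Gal(K/\mathbb{Q}),U_K)}.
\]

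The plan, therefore, is simply to substitute the first identity into the numerator of the second. Doing so replaces $\prod_{p\mid d_K} e_{p}(K/\mathbb{Q})$ by $[K:\mathbb{Q}]\cdot g_K^{+}$, producing exactly the asserted equality \eqref{equation, relation between PoK and gK}. No cohomological computation or further structure of $U_K$ is required; the roles of $H^1$ and of the ramification indices are separately encapsulated in the two quoted theorems.

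There is essentially no technical obstacle, because both inputs are already in the right normalized form. The only point that merits a brief mention is that Leopoldt's formula genuinely uses abelianness of $K$ (it is false for general Galois extensions), whereas Zantema's sequence does not, so the hypothesis of the proposition enters precisely through the first step. Once both are in hand, the proof reduces to a one-line manipulation.
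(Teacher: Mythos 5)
Your proposal is correct and is exactly the paper's argument: the paper obtains \eqref{equation, relation between PoK and gK} by combining Leopoldt's formula \eqref{equation, genus number formula} with the consequence \eqref{Zantema} of Zantema's exact sequence, precisely the substitution you describe. Your remark that abelianness enters only through Leopoldt's formula, while Zantema's sequence holds for all Galois extensions, is also consistent with how the paper uses these results.
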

Proposition \ref{proposition, relation between PoK and gK} gives us a key tool to investigate abelian number fields whose P\'olya groups are equal to their class groups. In particular, as applications of Formula \eqref{equation, relation between PoK and gK}, we find that the order of Po(K) divides the narrow genus number of K in some specific families of abelian number fields $K$. 

\begin{proposition} \label{proposition, Po(K) and gK for K cyclic}
	Let $K$ be a cyclic number field. Then
		\begin{equation} \label{equation, Po(K) and gK+ for K cyclic}
		\# \Po(K) = \left\{
		\begin{array}{ll}
			g_K^+/2, \,  & \,  \text{if $K$ is real and $N_{K/\mathbb{Q}}(U_K)=\{+1\}$},\\
			& \\
			g_K^+, \, & \, \text{otherwise.} \\	
		\end{array}
		\right.	
	\end{equation}
In addition, in the case that $K$ is a real cyclic field, we have
\begin{equation} \label{equation, UK+ Uk2}
	 \frac{\# \Po(K)}{h_K} = \left\{
	\begin{array}{ll}
		\frac{[U_K^+: U_K^2]}{2}\cdot \frac{g_K^+}{h_K^+}, \,  & \, \text{if} \, N_{K/\mathbb{Q}}(U_K)=\{+1\}, \\
		& \\
		{[U_K^+: U_K^2]}\cdot \frac{g_K^+}{h_K^+}, \, & \, \text{otherwise,} \\	
	\end{array}
	\right.	
		\end{equation}
	where $h_K^+$ denotes the narrow class number of $K$, $U_K^+$ denotes the totally positive units of $K$, and
	\begin{equation*}
		U_K^2=\{u^2 \, : \, u \in U_K\}.
	\end{equation*}	
\end{proposition}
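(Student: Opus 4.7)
The plan is to apply Proposition \ref{proposition, relation between PoK and gK} and compute $\#H^1(G, U_K)$, where $G = \Gal(K/\mathbb{Q})$ is cyclic of order $n = [K:\mathbb{Q}]$. Since $G$ is cyclic, $H^1(G,U_K) = \widehat{H}^1(G,U_K)$ and Tate cohomology is $2$-periodic, so it suffices to determine $\#\widehat{H}^0(G,U_K)$ together with the Herbrand quotient $q(U_K) = \#\widehat{H}^0(G,U_K)/\#\widehat{H}^1(G,U_K)$.

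First, $\#\widehat{H}^0(G,U_K) = [\{\pm 1\} : N_{K/\mathbb{Q}}(U_K)]$ equals $1$ or $2$. Next, by the classical Tate--Nakayama formula for units in a cyclic extension (derivable from Dirichlet's unit theorem, Shapiro's lemma, and the fact that $G$ acts transitively on the archimedean places $V_\infty$ of $K$), we have $q(U_K) = |H_\infty|/n$, where $H_\infty \subseteq G$ is the decomposition group at an archimedean place. Since $G$ is cyclic, $|H_\infty| = 1$ when $K$ is totally real, while $|H_\infty| = 2$ when $K$ is totally imaginary, in which case $K$ is necessarily CM and $H_\infty = \langle \tau\rangle$ for the complex conjugation $\tau$. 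In the imaginary case one further has $N_{K/\mathbb{Q}}(U_K) = \{+1\}$: for any $u \in U_K$, the element $u\tau(u) = N_{K/K^+}(u)$ lies in the maximal totally real subfield $K^+$ and is totally positive, since $\iota(u\tau(u)) = |\iota(u)|^2 > 0$ for every complex embedding $\iota$, whence $N_{K/\mathbb{Q}}(u) = N_{K^+/\mathbb{Q}}(u\tau(u)) > 0$. Combining these, $\#H^1(G, U_K) = \#\widehat{H}^0(G,U_K) \cdot n/|H_\infty|$ equals $2n$ in the real subcase with $N_{K/\mathbb{Q}}(U_K) = \{+1\}$ and equals $n$ in the other two subcases; substituting into \eqref{equation, relation between PoK and gK} yields \eqref{equation, Po(K) and gK+ for K cyclic}.

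For the identity \eqref{equation, UK+ Uk2}, suppose $K$ is real cyclic of degree $n$. The standard exact sequence
\begin{equation*}
1\longrightarrow U_K^+ \longrightarrow U_K \xrightarrow{\text{sign}} \{\pm 1\}^n \longrightarrow \Cl^+(K) \longrightarrow \Cl(K) \longrightarrow 1
\end{equation*}
gives $h_K^+/h_K = 2^n/[U_K:U_K^+]$. Since $U_K \cong \{\pm 1\} \times \mathbb{Z}^{n-1}$, one has $[U_K:U_K^2] = 2^n$, and combined with the chain $U_K^2 \subseteq U_K^+ \subseteq U_K$, this forces $[U_K:U_K^+]\cdot[U_K^+:U_K^2] = 2^n$, hence $h_K^+ = h_K \cdot [U_K^+:U_K^2]$. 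Substituting $h_K = h_K^+/[U_K^+:U_K^2]$ into \eqref{equation, Po(K) and gK+ for K cyclic} then gives \eqref{equation, UK+ Uk2} in both subcases. The main delicate step throughout is the Herbrand quotient computation $q(U_K) = |H_\infty|/n$; once it is in hand, the remainder is routine bookkeeping.
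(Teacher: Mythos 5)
Your proof is correct, and its overall skeleton matches the paper's: substitute the value of $\# H^1(\Gal(K/\mathbb{Q}),U_K)$ into Proposition \ref{proposition, relation between PoK and gK}, then convert $h_K^+$ to $h_K$ via $h_K^+=h_K\cdot[U_K^+:U_K^2]$. The difference is that the paper obtains both key inputs by citation --- the dichotomy $\# H^1(\Gal(K/\mathbb{Q}),U_K)=2[K:\mathbb{Q}]$ or $[K:\mathbb{Q}]$ is quoted from Zantema, and $h_K^+=h_K\cdot[U_K^+:U_K^2]$ from Milne --- whereas you derive them: the first from the Herbrand quotient formula $Q(G,U_K)=|H_\infty|/n$ together with $\#\yhwidehat{H}^0(G,U_K)=[\{\pm1\}:N_{K/\mathbb{Q}}(U_K)]$, and the second from the sign-map exact sequence and $[U_K:U_K^2]=2^n$ for totally real $K$. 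Your observation that a totally imaginary cyclic field is CM and therefore automatically has $N_{K/\mathbb{Q}}(U_K)=\{+1\}$ (so that $\#\yhwidehat{H}^0=2$ cancels against $|H_\infty|=2$) is exactly the point needed to see why the ``otherwise'' case absorbs all imaginary cyclic fields, and it is consistent with the Herbrand quotient computations the paper itself carries out later (via \cite[Proposition 5.10]{NChildress}) in the proof of Proposition \ref{proposition, Po(K) and gK for K imaginary tri-quadratic}. What your route buys is self-containment; what the paper's buys is brevity. No gaps.
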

\begin{proof}
For a cyclic number field $K$, it is known that 
$\#H^1(\Gal(K/\mathbb{Q}),U_K)=2[K:\mathbb{Q}]$ if $K$ is real and $N_{K/\mathbb{Q}}(U_K)=\{+1\}$, and is $[K:\mathbb{Q}]$ otherwise (see \cite[Section 3]{Zantema}.) Thus, \eqref{equation, Po(K) and gK+ for K cyclic} follows from \eqref{equation, relation between PoK and gK}. To obtain \eqref{equation, UK+ Uk2}, we use  \eqref{equation, Po(K) and gK+ for K cyclic} and 
\begin{equation*}
	h_K^+=h_K.\left[U_K^+:U_K^2\right],
\end{equation*}
see \cite[Theorem 1.7]{milneCFT}.     
\end{proof}
As a consequence of the above proposition, we obtain the following result for real quadratic fields. 

\begin{corollary} \label{corollary, Po(K),gK and gK+ real quadratic}       
If $K=\mathbb{Q}(\sqrt{D})$ is a real quadratic field, for a  square-free integer $D>0$, then

\begin{equation} \label{equation, Po(K) and gK for K real quadratic}
	\# \Po(K)=
	\left\{
	\begin{array}{ll}
		g_K/2, &  \text{if $N_{K/\mathbb{Q}}(\epsilon_K)=+1$ and  $D$ has no prime divisor $p \equiv 3 \, (\mathrm{mod}\, 4)$},\\
		g_K,  &  \text{otherwise},\\
	\end{array}
	\right.
\end{equation}
where $\epsilon_K$ denotes the fundamental unit of $K$. Moreover,
\begin{equation} \label{equation, Po(K), hK, hK+ for K real quadratic}
	\frac{\# \Po(K)}{h_K}=\frac{g_K^+}{h_K^+}.
\end{equation}
\end{corollary}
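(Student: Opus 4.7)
The plan is to derive both formulas directly from the general cyclic identities proved in Proposition \ref{proposition, Po(K) and gK for K cyclic}, specialising to $[K:\mathbb{Q}]=2$ and then comparing with the explicit quadratic formula \eqref{gK quadratic} for $g_K$ and with an elementary analysis of the totally positive unit index. Throughout, let $s_K$ denote the number of ramified primes of $K=\mathbb{Q}(\sqrt{D})$.

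For part (1), first observe that for a real quadratic field the units are $U_K=\{\pm 1\}\times\langle\epsilon_K\rangle$, so $N_{K/\mathbb{Q}}(U_K)=\{+1\}$ exactly when $N_{K/\mathbb{Q}}(\epsilon_K)=+1$. Applying formula \eqref{equation, Po(K) and gK+ for K cyclic} therefore gives $\#\Po(K)=g_K^+/2$ when $N_{K/\mathbb{Q}}(\epsilon_K)=+1$ and $\#\Po(K)=g_K^+$ otherwise. Next, Leopoldt's formula \eqref{equation, genus number formula} together with the fact that every ramified prime in a quadratic extension has ramification index $2$ yields $g_K^+=2^{s_K-1}$ unconditionally. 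Matching this against the formula for $g_K$ in \eqref{gK quadratic} — which is $2^{s_K-2}$ precisely when $N_{K/\mathbb{Q}}(\epsilon_K)=+1$ and some prime divisor of $D$ is $\equiv 3\pmod 4$, and $2^{s_K-1}$ in all other cases — a short case analysis confirms \eqref{equation, Po(K) and gK for K real quadratic}.

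For part (2), I would apply formula \eqref{equation, UK+ Uk2}, which in both cases reduces the identity to showing that the prefactor in front of $g_K^+/h_K^+$ equals $1$. This is a direct computation of the index $[U_K^+:U_K^2]$. Writing $U_K=\{\pm1\}\times\langle\epsilon_K\rangle$ and $U_K^2=\langle\epsilon_K^2\rangle$: when $N_{K/\mathbb{Q}}(\epsilon_K)=+1$ the conjugate of $\epsilon_K$ is $1/\epsilon_K>0$, so $\epsilon_K$ is totally positive, $U_K^+=\langle\epsilon_K\rangle$, and hence $[U_K^+:U_K^2]=2$, making the prefactor $[U_K^+:U_K^2]/2$ equal to $1$. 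When $N_{K/\mathbb{Q}}(\epsilon_K)=-1$, the conjugate of $\epsilon_K$ is $-1/\epsilon_K<0$, so $\epsilon_K$ is not totally positive; the totally positive units are exactly $U_K^+=\langle\epsilon_K^2\rangle=U_K^2$, giving $[U_K^+:U_K^2]=1$ and again prefactor $1$. In either case formula \eqref{equation, UK+ Uk2} collapses to $\#\Po(K)/h_K=g_K^+/h_K^+$.

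There is no real obstacle here: the heavy machinery is absorbed into Proposition \ref{proposition, Po(K) and gK for K cyclic} and Theorem \ref{theorem, Leopoldt's result for the genus number}. The only point requiring care is the bookkeeping in the two-case analysis of part (1), where one must correctly pair the case "$N(\epsilon_K)=+1$ with no prime $p\equiv 3\pmod 4$" (the only situation in which the factor of $2$ from \eqref{equation, Po(K) and gK+ for K cyclic} is not cancelled by a corresponding drop in $g_K$) against all remaining configurations.
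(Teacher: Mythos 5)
Your proposal is correct and follows essentially the same route as the paper: both parts rest on Proposition \ref{proposition, Po(K) and gK for K cyclic}, and your direct computation of $[U_K^+:U_K^2]$ (equal to $2$ or $1$ according to the sign of $N_{K/\mathbb{Q}}(\epsilon_K)$) reproduces exactly the index values the paper obtains via the factorization $4=[U_K:U_K^+]\cdot[U_K^+:U_K^2]$. For the first formula the paper's main argument uses the relation between $g_K^+$ and $g_K$ from \cite[Proposition 3.11]{Leriche 2014} where you instead use Leopoldt's formula $g_K^+=2^{s_K-1}$ together with \eqref{gK quadratic}, but the paper itself remarks that this alternative derivation works, so the two are interchangeable.
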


\begin{proof}
We have
	\begin{equation} \label{equation, gK gK+ real quadratic}
		g_K^+=
		\left\{
		\begin{array}{ll}
			2g_K, &  \textit{if $D$ has a prime divisor $p \equiv 3 \, (\mathrm{mod}\, 4)$},\\
			g_K,  &  \textit{otherwise},\\
		\end{array}
		\right.
	\end{equation}
	see \cite[Proposition 3.11]{Leriche 2014}. 
If $D$ has a prime divisor  $p \equiv 3 \, (\mathrm{mod}\, 4)$, then $N_{K/\mathbb{Q}}(\epsilon_{K})=+1$. Now by employing  \eqref{equation, gK gK+ real quadratic} and
Proposition \ref{proposition, Po(K) and gK for K cyclic} we get the relation \eqref{equation, Po(K) and gK for K real quadratic} (Note that \eqref{equation, Po(K) and gK for K real quadratic} can be also obtained from \eqref{gK quadratic} and \eqref{Hilbert}.) In addition,
 using Dirichlet's Unit Theorem, we have
\begin{equation} \label{equation, UK:UK+=4}
	4=\left[U_K:U_K^2\right]=\left[U_K:U_K^+\right]\cdot \left[U_K^+:U_K^2\right].
\end{equation}
Now if $N_{K/\mathbb{Q}}(\epsilon_K)=-1$, then $\left[U_K:U_K^+\right]=4$, since $-1,\epsilon_K \not \in U_K \backslash U_K^+$. But, if $N_{K/\mathbb{Q}}(\epsilon_K)=+1$, then $\epsilon_K \in U_K^+$ which implies that $\left[U_K:U_K^+\right]=2$. Consequently, by \eqref{equation, UK:UK+=4} we obtain 
		\begin{equation} \label{equation, UK UK+}
		\left[U_K^+:U_K^2\right] =
		\begin{cases}
			2, \,  & \textit{if} \, \, N_{K/\mathbb{Q}}(\epsilon_K)=+1, \\
			1, \,  & \textit{if}  \, \, N_{K/\mathbb{Q}}(\epsilon_K)=-1.
		\end{cases}
	\end{equation}
	From \eqref{equation, UK+ Uk2} and  \eqref{equation, UK UK+} we get \eqref{equation, Po(K), hK, hK+ for K real quadratic}.
\end{proof}
We show that the divisibility of $g_K$ by $\# \Po(K)$ also holds for imaginary bi-quadratic and tri-quadratic fields $K$. 
\begin{proposition} \label{proposition, Po(K) and gK for K imaginary bi-quadratic}
Let $K$ be an imaginary bi-quadratic field. Then
\begin{equation*} 
	\# \Po(K) = \left\{
	\begin{array}{ll}
		g_K/2, \,  & \, \text{if} \, \, W_K=N_{K^+/\mathbb{Q}}(\epsilon_{K^+})=1, \, \\
		& \\
		g_K, \, & \, \text{otherwise,} \\	
	\end{array}
	\right.
\end{equation*}
where $K^+$ denotes the real quadratic subfield of $K$, with the fundamental unit $\epsilon_{K^+}$, and $			W_K=\left[U_K:\mu_K\cdot U_{K^+}\right]$
denotes the Hasse unit index of $K$ in which  $\mu_K$ is the group of roots of unity in $K$.
\end{proposition}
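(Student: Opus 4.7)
The plan is to apply Proposition \ref{proposition, relation between PoK and gK}. Since $K$ is abelian and totally imaginary (so $g_K^+ = g_K$) of degree $4$ over $\mathbb{Q}$, the formula there reduces the claim to
$$\# H^1(G, U_K) = \begin{cases} 8, & W_K = 1 \text{ and } N_{K^+/\mathbb{Q}}(\epsilon_{K^+}) = +1, \\ 4, & \text{otherwise,} \end{cases}$$
where $G = \Gal(K/\mathbb{Q})$.

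To compute $\# H^1(G, U_K)$, I would apply the five-term inflation--restriction sequence for the normal subgroup $H = \Gal(K/K^+) = \langle \sigma \rangle$ generated by complex conjugation, with cyclic quotient $G/H = \Gal(K^+/\mathbb{Q})$:
$$0 \to H^1(G/H, U_{K^+}) \to H^1(G, U_K) \to H^1(H, U_K)^{G/H} \xrightarrow{\partial} H^2(G/H, U_{K^+}).$$
A direct Tate cohomology computation on $U_{K^+} = \langle -1 \rangle \times \langle \epsilon_{K^+} \rangle$ yields $\# H^1(G/H, U_{K^+}) = 4$ and $\# H^2(G/H, U_{K^+}) = 2$ when $N_{K^+/\mathbb{Q}}(\epsilon_{K^+}) = +1$, and $\# H^1(G/H, U_{K^+}) = 2$ with $\# H^2(G/H, U_{K^+}) = 1$ when $N_{K^+/\mathbb{Q}}(\epsilon_{K^+}) = -1$.

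For the third term, I would write $U_K = \mu_K \cdot \langle \eta \rangle$ with a generator $\eta$ satisfying $\eta^{W_K} \equiv \epsilon_{K^+} \pmod{\mu_K}$, and use that $\sigma$ inverts $\mu_K$ and fixes $U_{K^+}$ to show via Tate cohomology that $\# H^1(H, U_K) = 2$ when $W_K = 1$, while for $W_K = 2$ it is $2$ or $0$ according to whether the root of unity $\sigma(\eta)/\eta$ lies in $\mu_K^2$. The action of $G/H$ on $H^1(H, U_K) \cong \mu_K/\mu_K^2$ is trivial (since $\tau$ acts on a generator of $\mu_K$ by an odd power). Finally, I would evaluate $\partial$ via the transgression formula: when $W_K = 1$, the nontrivial class in $H^1(H, U_K)$ lifts through $H^1(G, \mu_K) \to H^1(G, U_K)$ (using that $H^1(G, \mu_K) \twoheadrightarrow H^1(H, \mu_K)$ is surjective), so $\partial = 0$; when $W_K = 2$, the relation $\eta^2 = \zeta_0 \epsilon_{K^+}$ forces $\partial$ either to be injective or its source to vanish, so the contribution beyond the inflation term is at most $1$.

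Multiplying out the pieces gives $\# H^1(G, U_K) = 8$ exactly when $W_K = 1$ and $N_{K^+/\mathbb{Q}}(\epsilon_{K^+}) = +1$, and $\# H^1(G, U_K) = 4$ in every other case, as required. The main obstacle is this last step, the explicit evaluation of the transgression $\partial$ and its coupled dependence on $W_K$ and $N_{K^+/\mathbb{Q}}(\epsilon_{K^+})$: it requires concrete cocycle manipulations and careful bookkeeping of the roots of unity $\zeta_0$ and $\delta$ that appear in $\eta^2 = \zeta_0 \epsilon_{K^+}$ and $\sigma(\eta) = \delta \eta$.
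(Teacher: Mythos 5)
Your opening reduction via Proposition \ref{proposition, relation between PoK and gK} (using $g_K^+=g_K$ for a totally imaginary field and $[K:\mathbb{Q}]=4$) is exactly the reduction the paper makes, so the proposition does come down to showing $\# H^1(G,U_K)=8$ when $W_K=N_{K^+/\mathbb{Q}}(\epsilon_{K^+})=1$ and $\# H^1(G,U_K)=4$ otherwise. The paper, however, does not compute this itself: after disposing of $K=\mathbb{Q}(\sqrt{-1},\sqrt{-2})$ separately, it quotes the cohomology computation directly from \cite[Lemma 4.3]{Zantema}. Your inflation--restriction calculation is therefore an attempt to reprove Zantema's lemma from scratch, and as written it has genuine gaps beyond the honest bookkeeping (your computations of $\# H^1(G/H,U_{K^+})$, $\# H^2(G/H,U_{K^+})$ and $\# H^1(H,U_K)$ are all correct).

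The decisive step in the $W_K=2$ case --- showing that the image of restriction in $H^1(H,U_K)^{G/H}$ is trivial, i.e.\ that $\partial$ is injective when $H^1(H,U_K)\neq 0$ --- is not carried out; you flag it yourself as ``the main obstacle''. Since inflation is injective, without this step you only get $4\le \# H^1(G,U_K)\le 8$ in that case, which does not decide the answer. In the $W_K=1$ case, your argument that $\partial=0$ rests on the unproved claim that $H^1(G,\mu_K)\to H^1(H,\mu_K)$ is surjective; this actually fails for $K=\mathbb{Q}(\sqrt{-1},\sqrt{-2})=\mathbb{Q}(\zeta_8)$ (no crossed homomorphism $G\to\mu_8$ restricts to $\sigma\mapsto\zeta_8$, because $\tau(\zeta_8)=\zeta_8^{a}$ with $a^2\not\equiv 1 \pmod{16}$). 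The conclusion survives there only because $N_{K^+/\mathbb{Q}}(1+\sqrt2)=-1$ makes $H^2(G/H,U_{K^+})$ trivial, so $\partial$ vanishes for a different reason; this is precisely the field the paper must exclude before citing Zantema, and your write-up does not single it out. Finally, your closing tally assigns the value $4$ to ``every other case'', but your own pieces would give $\# H^1(G/H,U_{K^+})\cdot 1=2$ if $W_K=2$ and $N_{K^+/\mathbb{Q}}(\epsilon_{K^+})=-1$; the bookkeeping only closes because that case is vacuous ($N_{K/K^+}(\eta)=\pm\epsilon_{K^+}$ is totally positive, forcing $N_{K^+/\mathbb{Q}}(\epsilon_{K^+})=+1$ whenever $W_K=2$), an observation you would need to add explicitly.
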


\begin{proof}
	First, observe that if $K=\mathbb{Q}(\sqrt{-1},\sqrt{-2})$, then $\# \Po(K)=g_K=h_K=1$ (Note that in this case, $K^+=\mathbb{Q}(\sqrt{2})$ has units of negative norm.) So, let $K \neq \mathbb{Q}(\sqrt{-1},\sqrt{-2})$. Then,
from \cite[Lemma 4.3]{Zantema}, we have
\begin{equation*}
	H^1(\Gal(K/\mathbb{Q}),U_K) \simeq \left\{
	\begin{array}{ll}
		\mathbb{Z}/2\mathbb{Z} \oplus \mathbb{Z}/2\mathbb{Z} \oplus \mathbb{Z}/2\mathbb{Z}, & \textit{if} \, \, W_K=N_{K^+/\mathbb{Q}}(\epsilon_{K^+})=1,\\
		& \\
		\mathbb{Z}/2\mathbb{Z} \oplus \mathbb{Z}/2\mathbb{Z} , & \textit{otherwise.} \\	
	\end{array}
	\right.
\end{equation*}
Now the assertion follows from the above isomorphism and Proposition \ref{proposition, relation between PoK and gK}.
\end{proof}

We next consider the imaginary tri-quadratic fields.

\begin{proposition} \label{proposition, Po(K) and gK for K imaginary tri-quadratic}
If $K$ is an imaginary tri-quadratic field, then $\# \Po(K) \mid g_K$.
\end{proposition}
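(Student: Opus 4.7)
The plan is as follows. By Proposition \ref{proposition, relation between PoK and gK}, applied to the abelian field $K$ with $[K:\mathbb{Q}] = 8$ and $G = \Gal(K/\mathbb{Q}) \cong (\mathbb{Z}/2\mathbb{Z})^3$, we have
\[
\#\Po(K) = \frac{8 \cdot g_K^+}{\#H^1(G, U_K)}.
\]
Since $K$ is totally complex, every archimedean place is complex and is therefore automatically unramified in every extension, so $g_K^+ = g_K$. Hence $\#\Po(K) \mid g_K$ is equivalent to $8 \mid \#H^1(G, U_K)$.

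To establish this divisibility, I would analyze $H^1(G, U_K)$ by exploiting the CM structure of $K$. Let $\sigma \in G$ denote complex conjugation, $K^+$ its fixed field (a real bi-quadratic field), and set $G^+ := G/\langle\sigma\rangle \cong (\mathbb{Z}/2\mathbb{Z})^2$. Because $K/K^+$ is a CM extension, every $u \in U_K$ satisfies $u\bar u \in U_{K^+}$ and $u/\bar u \in \mu_K$; consequently $\ker(N_{K/K^+}|_{U_K}) = \mu_K$ and $(1-\sigma)U_K \subseteq \mu_K$. A short case analysis according to $\mu_K \in \{\{\pm 1\}, \mu_4\}$ and the Hasse unit index $W_K = [U_K:\mu_K U_{K^+}] \in \{1,2\}$ then yields $H^1(\langle\sigma\rangle, U_K) = \mu_K/(1-\sigma)U_K$, a cyclic group of order at most $2$ which is in fact trivial precisely when $W_K = 2$. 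The Lyndon-Hochschild-Serre five-term exact sequence for $1 \to \langle\sigma\rangle \to G \to G^+ \to 1$,
\[
0 \to H^1(G^+, U_{K^+}) \to H^1(G, U_K) \to H^1(\langle\sigma\rangle, U_K)^{G^+} \to H^2(G^+, U_{K^+}),
\]
then reduces the problem to that of the real bi-quadratic cohomology $\#H^1(G^+, U_{K^+})$, which one determines to be $4$ or $8$ by applying Proposition \ref{proposition, relation between PoK and gK} to $K^+$ together with the real-bi-quadratic analog of \cite[Lemma 4.3]{Zantema}.

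The main obstacle will be verifying $8 \mid \#H^1(G, U_K)$ uniformly across all the subcases. When $W_K = 2$ the third term of the five-term sequence vanishes and $H^1(G, U_K) \cong H^1(G^+, U_{K^+})$, so the divisibility by $8$ must come entirely from the bi-quadratic cohomology of the real field $K^+$; this requires a careful accounting of the norm behavior of the fundamental units in the three real quadratic subfields of $K^+$. When $W_K = 1$ the additional factor of $2$ contributed by $H^1(\langle\sigma\rangle, U_K)$ must be captured, either by showing surjectivity of the restriction map onto $H^1(\langle\sigma\rangle, U_K)^{G^+}$ (i.e.\ that the transgression into $H^2(G^+, U_{K^+})$ vanishes) or by a direct unit-theoretic argument. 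Executing this case analysis cleanly—essentially extending \cite[Lemma 4.3]{Zantema} from the bi-quadratic to the tri-quadratic setting—constitutes the technical heart of the proof.
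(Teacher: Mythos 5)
Your opening reduction coincides with the paper's: by Proposition \ref{proposition, relation between PoK and gK}, together with $g_K^+=g_K$ for a totally complex field, the claim is equivalent to $8\mid \# H^1(G,U_K)$, and your computation $H^1(\langle\sigma\rangle,U_K)\simeq \mu_K/(1-\sigma)U_K$, of order $2/W_K$, is correct. The genuine gap is the case analysis you defer at the end, and it is not a routine verification. By Setzer's classification \cite{Bennett}, when $K^+$ is of type $M_2$ or $M_3$ one can have $\# H^1(G^+,U_{K^+})=4$ (precisely when $\left[U_{K^+}:U_{k_1}U_{k_2}U_{k_3}\right]=2$). In that situation your five-term sequence only gives $\# H^1(G,U_K)\le 2\cdot 4=8$, with equality holding if and only if \emph{both} $W_K=1$ \emph{and} the transgression $H^1(\langle\sigma\rangle,U_K)^{G^+}\to H^2(G^+,U_{K^+})$ vanishes. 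Neither fact is formal: $H^2(G^+,U_{K^+})$ is in general nonzero for a real bi-quadratic field, so the transgression does not vanish for degree reasons, and establishing $W_K=1$ in this configuration requires real input (in the paper it rests on Hirabayashi--Yoshino together with Kubota's description of fundamental systems of units, via Lemma \ref{lemma, root-epsilon}). Since the target is exact divisibility by $8$, losing a single factor of $2$ here destroys the conclusion; this subcase is the entire content of the proof, not a loose end to be tidied up.

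The paper sidesteps the transgression problem by choosing a different tower: it picks a real quadratic subfield $k_1\subseteq K^+$ whose fundamental unit has norm $-1$, so that $\yhwidehat{H}^0(\Gal(k_1/\mathbb{Q}),U_{k_1})=0$ and hence $H^2(\Gal(k_1/\mathbb{Q}),U_{k_1})=0$. The inflation--restriction sequence for $\mathbb{Q}\subset k_1\subset K$ is then genuinely short exact, $\# H^1(\Gal(k_1/\mathbb{Q}),U_{k_1})=2$ by a Herbrand-quotient computation, and the problem reduces to $4\mid \# H^1(\Gal(K/k_1),U_K)$, which is settled by Herbrand quotients and a norm-(non)surjectivity argument in the subcases $M_2$ and $M_3$. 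If you wish to keep your route through $K^+$, you must supply proofs of the two missing facts above; otherwise, re-routing through such a $k_1$ is the cleaner path.
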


\begin{proof}
Let $K=\mathbb{Q}(\sqrt{-m_1},\sqrt{-m_2},\sqrt{-m_3})$, where $m_1,m_2,m_3$ are three distinct square-free positive integers. We show that 
\begin{equation*} 
	8 \mid \# H^1(\Gal(K/\mathbb{Q}),U_K).
\end{equation*}
Then the assertion follows from Proposition \ref{proposition, relation between PoK and gK}. 

Let $K^+=\mathbb{Q}(\sqrt{m_1m_2},\sqrt{m_1m_3})$. Following Setzer's terminology in \cite{Bennett},  for $i\in\{0,1,2,3\}$, we say $K^+$ is of the type $M_i$  if $i$ of the quadratic subfields of $K^+$ have units of negative norm. For a quadratic subfield $k_j$ of $K^+$, denote by $\epsilon_j$ the fundamental unit of $k_j$. 
 We have two cases:
 	
\smallskip
\textit{(i)}
	At most one of the subfields $k_j$'s has units with negative norm. Hence, $K^+$ is of the type $M_0$ or $M_1$. In this case, by \cite[Table I]{Bennett}, we find that
	\begin{equation} \label{equation, 8 divides H^1K+}
		8 \mid \# H^1(\Gal(K^+/\mathbb{Q}),U_{K^+}).
	\end{equation}
Since the cohomology group $H^1(\Gal(K^+/\mathbb{Q}),U_{K^+})$ is embedded in $H^1(\Gal(K/\mathbb{Q}),U_K)$ via the {inflation map}, then \eqref{equation, 8 divides H^1K+} yields the claim.

\smallskip	
\textit{(ii)}
At least two of the subfields $k_j$'s, say $k_1$ and $k_2$, have units with negative norm, i.e., $N_{k_1/\mathbb{Q}}(\epsilon_1)=N_{k_2/\mathbb{Q}}(\epsilon_2)=-1$. Hence,  $K^+$ is of the type $M_2$ or $M_3$. From \cite[Table I]{Bennett}, we observe that either
	\begin{equation} \label{equation, H1=8 and Unit index is 1}
		\# H^1(\Gal(K^+/\mathbb{Q}),U_{K^+})=8 \quad \text{and} \quad \left[U_{K^+}:U_{k_1}U_{k_2}U_{k_3}\right]=1,
	\end{equation}
	or
	\begin{equation} \label{equation, H1=4 and Unit index is 2}
		\# H^1(\Gal(K^+/\mathbb{Q}),U_{K^+})=4 \quad \text{and} \quad \left[U_{K^+}:U_{k_1}U_{k_2}U_{k_3}\right]=2.
	\end{equation}
	If \eqref{equation, H1=8 and Unit index is 1} holds, then similar to the case \textit{(i)}, the inflation map implies
	\begin{equation*}
		8 \mid \# H^1(\Gal(K/\mathbb{Q}),U_{K}).
	\end{equation*}
	Hence, we assume that \eqref{equation, H1=4 and Unit index is 2} holds. Since  $N_{k_1/\mathbb{Q}}(\epsilon_1)=-1$,  the norm map 
	\begin{equation*}
		N_{k_1/\mathbb{Q}}:U_{k_1} \rightarrow U_{\mathbb{Q}}=\{\pm 1\}
	\end{equation*}
is surjective. Consequently, as $k_1/\mathbb{Q}$ is cyclic, by \cite[Proposition 1.6.12]{Neukirch}, we have
	\begin{equation}\label{trivial}
		H^2(\Gal(k_1/\mathbb{Q}),U_{k_1}) \simeq \yhwidehat{H}^0(\Gal(k_1/\mathbb{Q}),U_{k_1})=\frac{U_{\mathbb{Q}}}{N_{k_1/\mathbb{Q}}(U_{k_1})}=\{0\},
	\end{equation}
	where 
		$\yhwidehat{H}^{0}(\Gal(k_1/\mathbb{Q}),U_{k_1})$ denotes the $0^{\text{th}}$ Tate cohomology group of $\Gal(k_1/\mathbb{Q})$ with coefficient in $U_{k_1}$. In this case,
	for the subgroup $\Gal(K/k_1)$ of $\Gal(K/\mathbb{Q})$, we have the following {inflation-restriction} exact sequence 
	\begin{equation} \label{equation, Inf-Res for K/k1}
		0 \rightarrow H^1(\Gal(k_1/\mathbb{Q}),U_{k_1}) \xrightarrow{\text{inf}} H^1(\Gal(K/\mathbb{Q}),U_K) \xrightarrow{\text{res}} H^1(\Gal(K/k_1),U_K) \rightarrow 	\underbrace{H^2(\Gal(k_1/\mathbb{Q}),U_{k_1})}_{0}.
	\end{equation} 
	
	We first compute the order of $H^1(\Gal(k_1/\mathbb{Q}),U_{k_1})$. By \eqref{trivial},  for the cyclic extension $k_1/\mathbb{Q}$,
	the Herbrand quotient $Q(\Gal(k_1/\mathbb{Q}),U_{k_1})$ is given by
	\begin{equation} \label{equation, Herbrand for k_1}
		Q(\Gal(k_1/\mathbb{Q}),U_{k_1})=\frac{\# \yhwidehat{H}^0(\Gal(k_1/\mathbb{Q}),U_{k_1})}{\# H^1(\Gal(k_1/\mathbb{Q}),U_{k_1})}=\frac{1}{\# H^1(\Gal(k_1/\mathbb{Q}),U_{k_1})}.	
	\end{equation}
	On the other hand, by \cite[Proposition 5.10]{NChildress},
	\begin{equation} \label{equation, Herbrand2 for k_1}
		Q(\Gal(k_1/\mathbb{Q}),U_{k_1})=\frac{2^{s_{k_1/\mathbb{Q}}}}{[k_1:\mathbb{Q}]},
	\end{equation}
	where $s_{k_1/\mathbb{Q}}$ denotes the number of infinite places of $\mathbb{Q}$ ramified in $k_1$.
	Since $k_1$ is real, 
	from \eqref{equation, Herbrand for k_1} and \eqref{equation, Herbrand2 for k_1}, we get
	\begin{equation} \label{equation, order of H1k_1 is 2}
		\# H^1(\Gal(k_1/\mathbb{Q}),U_{k_1})=2.
	\end{equation}
	
	Next, we show that   
	\begin{equation}
	\label{four1}
	4 \mid \# H^1(\Gal(K/k_1),U_K),
	\end{equation}
	which together with \eqref{equation, Inf-Res for K/k1} and \eqref{equation, order of H1k_1 is 2} imply 
	\begin{equation*}
		8 \mid \# H^1(\Gal(K/\mathbb{Q}),U_K),
	\end{equation*}
	as desired. 
	
	We consider two subcases:

\smallskip
\textit{(ii-a).} 
			Let $K^+$ be of the type $M_2$, i.e., $N_{k_3/\mathbb{Q}}(\epsilon_3)=+1$. In this case, to prove \eqref{four1},
		we show that, in fact,
	\begin{equation*}
			4 \mid \# H^1(\Gal(K^+/k_1),U_{K^+})
	\end{equation*}
(Note that $H^1(\Gal(K^+/k_1),U_{K^+})$ is embedded in $H^1(\Gal(K/k_1),U_K)$ via the inflation map.) For the cyclic extension $K^+/k_1$, the Herbrand quotient $Q(\Gal(K^+/k_1),U_{K^+})$ is given by
\begin{equation*} 
	Q(\Gal(K^+/k_1),U_{K^+})=\frac{2^{s_{K^+/k_1}}}{[K^+:k_1]},
\end{equation*}
where $s_{K^+/k_1}$ denotes the number of infinite places of $k_1$ ramified in $K^+$. Hence
\begin{equation} \label{equation, Herbrand Quotient for K+/k1}
	Q(\Gal(K^+/k_1),U_{K^+})=\frac{1}{2}=\frac{\# \yhwidehat{H}^0(\Gal(K^+/k_1),U_{K^+})}{\# H^1(\Gal(K^+/k_1),U_{K^+})}.
\end{equation}
To deduce that the order of $ H^1(\Gal(K^+/k_1),U_{K^+})$ is divisible by $4$, we show that
\begin{equation*}
	 \# \yhwidehat{H}^0(\Gal(K^+/k_1),U_{K^+}) >1.
\end{equation*}
Since
\begin{equation*}
\# \yhwidehat{H}^0(\Gal(K^+/k_1),U_{K^+})=\left[U_{k_1}: N_{K^+/k_1}(U_{K^+})\right],
\end{equation*}
it is enough to show that the norm map
		\begin{equation*}
		N_{K^+/k_1}:U_{K^+} \rightarrow U_{k_1}
	\end{equation*} 
is not surjective.		
			Following \cite[p. 161]{Bennett}, define
		{\small 
			\begin{equation} \label{equation, the set C}
				\mathcal{C}=\{(u_1,u_2,u_3) \in U_{k_1} \times U_{k_2} \times U_{k_3} : N_{k_1/\mathbb{Q}}(u_1)=N_{k_2/\mathbb{Q}}(u_2)=N_{k_3/\mathbb{Q}}(u_3)=\text{sign}(u_1u_2u_3)\}.
		\end{equation}}
		Then, for the function
		\begin{align*} \label{equation, the norm function phi}
			\varphi : U_{K^+} & \rightarrow U_{k_1} \times U_{k_2} \times U_{k_3} \\ 
			u & \mapsto \left( N_{K^+/k_1}(u),N_{K^+/k_2}(u), N_{K^+/k_3}(u) \right), \nonumber
		\end{align*}
		we have
		\begin{equation} \label{equation, Im(phi) subset C}
			\Image(\varphi) \subseteq \mathcal{C},
		\end{equation}
		 see \cite[p. 161]{Bennett}.
		 Now if the norm map $N_{K^+/k_1}$ is surjective, then for the element $u_0 \in U_{K^+}$ with $N_{K^+/k_1}(u_0)=\epsilon_1$ we have
		\begin{equation*}
			\varphi(u_0)=\left( \epsilon_1, N_{K^+/k_2}(u_0),N_{K^+/k_3}(u_0)\right) \in \mathcal{C}.
		\end{equation*}
		Thus, by \eqref{equation, the set C}, we get a unit of $k_3$, say $N_{K^+/k_3}(u_0)$, with negative norm which contradicts the assumption $N_{k_3/\mathbb{Q}}(\epsilon_3)=+1$. Hence, if $K^+$ is of type $M_2$, then the norm map 
		\begin{equation*}
			N_{K^+/k_1}:U_{K^+} \rightarrow U_{k_1}
		\end{equation*} 
can not be surjective, as claimed. Therefore, by the above argument, \eqref{four1} holds.
		
\smallskip		
\textit{(ii-b).} 
		Let $K^+$ be of the type $M_3$, i.e., $N_{k_3/\mathbb{Q}}(\epsilon_3)=-1$. If	the norm map
		\begin{equation*}
			N_{K^+/k_1}:U_{K^+} \rightarrow U_{k_1}
		\end{equation*}
	is not surjective, similar to the case \textit{(ii-a)}, we have 
$4 \mid \# H^1(\Gal(K^+/k_1),U_{K^+})$ which yields \eqref{four1}.
So, assume that the norm map $N_{K^+/k_1}$ is surjective. Then, 
\begin{equation*}
	H^2(\Gal(K^+/k_1),U_{K^+}) \simeq 	\yhwidehat{H}^0(\Gal(K^+/k_1),U_{K^+})=\{0\}.
\end{equation*} 
In this case, from \eqref{equation, Herbrand Quotient for K+/k1}, we get
 \begin{equation}
 \label{two}
 	\# H^1(\Gal(K^+/k_1),U_{K^+})=2,
 \end{equation}
and for the field extensions $k_1 \subseteq K^+ \subseteq K$, we get the following inflation-restriction exact sequence
{\small 
\begin{equation} \label{equation, Inf-Res M3}
0 \rightarrow H^1(\Gal(K^+/k_1),U_{ K^+}) \xrightarrow{\text{inf}} H^1(\Gal(K/k_1),U_K) \xrightarrow{\text{res}} H^1(\Gal(K/K^+),U_K) \rightarrow \underbrace{H^2(\Gal(K^+/k_1),U_{K^+})}_{0}.
\end{equation}} 
Now we show that 
\begin{equation} \label{equation, H1 K/K+=2}
\# H^1(\Gal(K/K^+),U_K) =2.
\end{equation}
Then from \eqref{two} and \eqref{equation, Inf-Res M3}, we obtain
$ \# H^1(\Gal(K/k_1),U_{K})=4$ which implies \eqref{four1}.

Since $K/K^+$ is cyclic, the Herbrand Quotient $Q(\Gal(K/K^+),U_K)$ is given by
\begin{equation} \label{equation, Herbrand quotient for K/K+}
Q(\Gal(K/K^+),U_K)=\frac{2^{s_{K/K^+}}}{[K:K^+]}=\frac{\# \yhwidehat{H}^0(\Gal(K/K^+),U_K)}{\# H^1(\Gal(K/K^+),U_K)},
\end{equation}
where $s_{K/K^+}$ denotes the number of infinite places of $K^+$ ramified in $K$. Therefore, from \eqref{equation, Herbrand quotient for K/K+} we get
\begin{equation} \label{equation, H1 1/8}
\# H^1(\Gal(K/K^+),U_K)=\frac{1}{8} \cdot \# \yhwidehat{H}^0(\Gal(K/K^+),U_K)=\frac{1}{8} \cdot [U_{K^+}:N_{K/K^+}(U_K)].
\end{equation}
We next show that
\begin{equation*}
[U_{K^+}:N_{K/K^+}(U_K)]=16,
\end{equation*}
to obtain \eqref{equation, H1 K/K+=2} from \eqref{equation, H1 1/8}. Observe that if $U_K=\mu_K\cdot U_{K^+}$, then by  Dirichlet's Unit Theorem, we have
\begin{equation*}
	[U_{K^+}:N_{K/K^+}(U_K)]=[U_{K^+}:N_{K/K^+}(\mu_K\cdot U_{K^+})]=[U_{K^+}: U_{K^+}^2]=16,
	\end{equation*}
	as we desire
(Note that since $K/K^+$ is a quadratic extension and $K^+$ is totally real, we have $N_{K/K^+}(\mu_K)=\{+1\}$.) Hence, it is enough to show that $W_K=\left[U_K : \mu_K \cdot U_{K^+}\right]=1$.
		If $\mathbb{Q}(\sqrt{-1},\sqrt{-2}) \subseteq K$,  by \cite[Theorem 2]{Hirabayashi}, we have $W_K=1$. Also,
		if $\mathbb{Q}(\sqrt{-1},\sqrt{-2}) \not \subseteq K$, then by Lemma \ref{lemma, root-epsilon} below, again we have  $W_K=1$ (Recall that we assumed \eqref{equation, H1=4 and Unit index is 2} holds, in particular the unit index $\left[U_{K^+}: U_{k_1} U_{k_2} U_{k_3}\right]$ is 2.)
Thus, $[U_{K^+}:N_{K/K^+}(U_K)]=16$.  
From here by \eqref{two}, \eqref{equation, Inf-Res M3}, and  \eqref{equation, H1 1/8} we get
\begin{equation*}
	\# H^1(\Gal(K/k_1),U_K)=4,
\end{equation*}
which implies that \eqref{four1} holds in this case, too.
\end{proof}

\begin{lemma} \label{lemma, root-epsilon}
With the above notations, suppose that $N_{k_i/\mathbb{Q}}(\epsilon_{k_i})=-1$ for every $i=1,2,3$, and $\left[U_{K^+}:U_{k_1}U_{k_2}U_{k_3}\right]=2$. If $\mathbb{Q}(\sqrt{-1},\sqrt{-2}) \not \subseteq K$, then  $W_K=\left[U_K : \mu_K U_{K^+}\right]$ equals to $1$.
\end{lemma}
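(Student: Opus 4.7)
The plan is to argue by contradiction: assuming $W_K = [U_K : \mu_K U_{K^+}] = 2$, I would produce a unit $\eta \in U_K$ such that $\zeta := \eta/\bar\eta$ is a non-square root of unity in $\mu_K$. This reformulation uses Kronecker's theorem (the element $\eta/\bar\eta \in U_K$ has absolute value $1$ in every complex embedding, hence is a root of unity) together with the observation that the homomorphism $U_K \to \mu_K$, $u \mapsto u/\bar u$, has kernel $U_{K^+}$ and sends $\mu_K$ onto $\mu_K^2$, so $W_K = 2$ is exactly the statement that the image of this map strictly contains $\mu_K^2$.

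The first substantive step is to pin down $\mu_K$. Any prime $p \equiv 3 \pmod 4$ dividing the discriminant of a real quadratic field forces the fundamental unit of that field to have norm $+1$; so under $N_{k_i/\mathbb{Q}}(\epsilon_{k_i}) = -1$ for $i = 1,2,3$, none of the $k_i$ can equal $\mathbb{Q}(\sqrt{3})$, whence $\sqrt{3} \notin K^+$ and consequently $\mathbb{Q}(\zeta_{12}) \not\subseteq K$. Combined with $\mathbb{Q}(\sqrt{-1}, \sqrt{-2}) \not\subseteq K$, a short enumeration of the cyclotomic subfields of a multiquadratic extension shows $\mu_K \in \{\mu_2, \mu_4, \mu_6\}$.

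I would then treat each case separately. In every case, setting $u := N_{K/K^+}(\eta) \in U_{K^+}$ (which is automatically totally positive), the relation $\eta/\bar\eta = \zeta$ becomes $\eta^2 = \zeta u$, so that $K = K^+(\sqrt{\zeta u})$; comparing with $K = K^+(\sqrt{-m_j})$ and, when $\mu_K \neq \mu_2$, with $K = K^+(i)$ or $K = K^+(\sqrt{-3})$, places $u$ in a specific non-trivial class of $(K^+)^{\times} / (K^+)^{\times 2}$, namely the class of $m_j$, of $2$, or of $3$ respectively (with $\sqrt{2} \notin K^+$ crucial in the $\mu_K = \mu_4$ case). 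The hypothesis $[U_{K^+} : U_{k_1} U_{k_2} U_{k_3}] = 2$ then gives an explicit finite list of possible classes for a totally positive unit $u \in U_{K^+}$ modulo $U_{K^+}^2$, using a distinguished generator $\epsilon^*$ of the non-trivial coset whose square $(\epsilon^*)^2$ is a specific sign times a product of the $\epsilon_{k_i}$ determined by Setzer's tables in \cite{Bennett}. Matching these finitely many classes against $[m_j]$, $[2]$, or $[3]$ and then projecting to each $k_i$ by the relative norm $N_{K^+/k_i}$ will yield, after taking $N_{k_i/\mathbb{Q}}$, the equation $N_{k_i/\mathbb{Q}}(\epsilon_{k_i}) = +1$ for some $i$, contradicting the hypothesis.

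The main obstacle will be the case $\mu_K = \mu_6$: here $\sqrt{-3} \in K$ couples the three $m_j$'s via the relation $-m_1 m_2 m_3 \equiv -3 \pmod{\mathbb{Q}^{\times 2}}$, so the square-class bookkeeping is delicate and one has to keep careful track both of the sign in $(\epsilon^*)^2 = \pm \epsilon_{k_1}^{a_1} \epsilon_{k_2}^{a_2} \epsilon_{k_3}^{a_3}$ and of the $6$th-root-of-unity factor absorbed into $\eta$. Setzer's precise classification in \cite{Bennett} of $M_3$-type biquadratic totally real fields with unit index $2$ should provide exactly the handle needed to rule out $3u \in (K^+)^{\times 2}$ and thereby close the argument.
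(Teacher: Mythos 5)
Your opening reductions are correct: $W_K=2$ is equivalent to the existence of $\eta\in U_K$ with $\eta/\bar\eta$ a non-square in $\mu_K$, which gives $\eta^2=\zeta u$ with $u=N_{K/K^+}(\eta)$ a totally positive unit of $K^+$; and the hypotheses do pin down $\mu_K\in\{\mu_2,\mu_4,\mu_6\}$. The problem is that the decisive step is never carried out. You place $u$ in the square class of $m_j$ (resp.\ of $2$) in $(K^+)^{\times}/(K^+)^{\times 2}$ and then assert that matching this against the unit classes coming from Setzer's and Kubota's description of $U_{K^+}$ ``will yield'' $N_{k_i/\mathbb{Q}}(\epsilon_{k_i})=+1$ for some $i$, while explicitly deferring the $\mu_6$ case to bookkeeping that Setzer's tables ``should provide''. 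That matching is the entire content of the lemma, so as written this is an outline with the core argument missing, not a proof. For what it is worth, the gap is closable and more uniformly than you anticipate: in every case $u$ is congruent modulo $(K^+)^{\times 2}$ to a positive \emph{rational} number (in the $\mu_6$ case one may first multiply $\eta$ by an element of $\mu_6$ to reduce to $\zeta=-1$, since $\mu_6^2=\mu_3$), so $N_{K^+/k_i}(u)\in k_i^{\times 2}$ for every $i$. Writing $u\equiv\pm\nu^{a}\epsilon_{k_2}^{b}\epsilon_{k_3}^{c}$ modulo $U_{K^+}^2$ with $\nu=\sqrt{\epsilon_{k_1}\epsilon_{k_2}\epsilon_{k_3}}$ (Kubota's fundamental system, forced by the index-$2$ hypothesis), and using $N_{K^+/k_1}(\nu)=\pm\epsilon_{k_1}$ and $N_{K^+/k_1}(\epsilon_{k_2})=N_{K^+/k_1}(\epsilon_{k_3})=-1$ --- both consequences of $N_{k_i/\mathbb{Q}}(\epsilon_{k_i})=-1$, which also guarantees that neither $\epsilon_{k_1}$ nor $-\epsilon_{k_1}$ is a square in $k_1$ --- forces $a=b=c=0$, hence $u\equiv\pm1$, which is incompatible with $u$ being totally positive and lying in a non-trivial square class. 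So the delicate $\mu_6$ sign-tracking you worry about is not actually needed.

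The paper's own proof takes an entirely different and much shorter route: by Kubota's Satz 1, the hypothesis $\left[U_{K^+}:U_{k_1}U_{k_2}U_{k_3}\right]=2$ forces $\sqrt{\epsilon_{k_1}\epsilon_{k_2}\epsilon_{k_3}}\in K^+$, whereas Hirabayashi--Yoshino's analysis of the Hasse unit index (Case (4) of the proof of their Theorem 1) shows that $W_K=2$ would force $\sqrt{\epsilon_{k_1}\epsilon_{k_2}\epsilon_{k_3}}\notin K^+$; these are incompatible, so $W_K=1$. Your approach, once completed, essentially reproves the relevant case of Hirabayashi--Yoshino from scratch; that is a legitimate self-contained alternative, but in its present form the proof has not been given.
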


\begin{proof}
Since each $\epsilon_i$ has a negative norm, by \cite[Satz 1]{Kubota}, a system of fundamental units for $U_{K^+}$ is  given by either  $\{\epsilon_{k_1},\epsilon_{k_2},\epsilon_{k_3}\}$, or  $\{\sqrt{\epsilon_{k_1} \epsilon_{k_2} \epsilon_{k_3}}, \epsilon_{k_2},\epsilon_{k_3}\}$. Since
\begin{equation*}
	 \left[U_{K^+}:U_{k_1}U_{k_2}U_{k_3}\right] = 2,
\end{equation*}
the latter case holds.
In particular, $\sqrt{\epsilon_{k_1} \epsilon_{k_2} \epsilon_{k_3}} \in K^+$. By \cite[Proof of Theorem 1, Case (4)]{Hirabayashi}, if $W_K=2$ then $K^+(\sqrt{\epsilon_{k_1} \epsilon_{k_2} \epsilon_{k_3}})$ is a quadratic extension of $K^+$, hence $\sqrt{\epsilon_{k_1} \epsilon_{k_2} \epsilon_{k_3}} \not \in K^+$. Therefore, we must have $W_K=1$.
\end{proof}

\section{Proofs of finiteness theorems}\label{three}
The following lower bounds,  given by Wong \cite{Peng-Jie}, on the relative class numbers $h_K^-$ of CM-fields $K$ with solvable normal closures,  play a crucial role in our proofs.
\begin{theorem}\label{theorem,Peng-Jie}
(i) Let $K$ be a CM-field with solvable normal closure and with the maximal totally real subfield $K^+$. Write $d_K=d_{K^+}^2 f$, and let $n=[K^+:\mathbb{Q}]$. Then there are effective constants $c_3 >0$ and $c_4 >1$ such that
\begin{equation} \label{equation, 1st bound Peng-Ji}
h_K^- \geq \frac{c_3 c_4^n f^{\left(\frac{1}{2}-\frac{1}{2n} \right)}}{n (2n)^{e(2n)} \delta (2n)} ,
\end{equation}
where
\begin{equation}
\label{delta(n)}
e(n)=\max_{p^{\alpha} | n} \alpha, \quad \text{and} \quad 	
\delta (n)=\left(e(n)+1 \right)^2 3^{\frac{1}{3}} 12^{e(n)-1}.
\end{equation}
Moreover, for any $\kappa \in \left[\frac{1}{8 \log {d_K}}, 1\right]$, there exists an effective constant $c(\kappa) >0$, depending only on $\kappa$, such that
\begin{equation} \label{equation, 2nd bound Peng-Ji}
	h_K^- \geq \frac{c_3 c(\kappa)^n}{n (2n)^{e(2n)} \delta (2n)} d_{K^+}^{(\frac{1}{2}-\frac{1}{n}-\kappa)} f^{(\frac{1}{2}-\frac{1}{2n})}.
\end{equation}
(ii) The lower bound \eqref{equation, 2nd bound Peng-Ji} can be improved as
\begin{equation} \label{equation, 3rd bound Peng-Ji}
	h_K^- \geq \frac{\tilde{c}_3 c(\kappa)^n}{n (2n)^{e(2n)} \delta (2n)} d_{K^+}^{(\frac{1}{2}-\frac{1}{2n}-\kappa)} f^{(\frac{1}{2}-\frac{1}{4n})},
\end{equation}
for an ineffective constant $\tilde{c}_3 >0$.
\end{theorem}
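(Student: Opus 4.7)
The plan is to argue analytically via the class number formula. First, I would apply the formula to both $K$ and its maximal totally real subfield $K^+$, and take the ratio. Using the standard CM-field identity $R_K = 2^{n-1} R_{K^+}/W_K$ (with $2n = [K:\mathbb{Q}]$), this yields
\[
h_K^- \;=\; \frac{w_K\,W_K}{(2\pi)^n \, 2^{n-1}}\,\sqrt{f}\;\kappa(K/K^+),
\]
where $f = d_K/d_{K^+}^2$ and $\kappa(K/K^+) = \mathrm{Res}_{s=1}(\zeta_K(s)/\zeta_{K^+}(s))$. Since $w_K \geq 2$ and $W_K \in \{1,2\}$, the combinatorial prefactor is harmless, so the real task will be to bound $\kappa(K/K^+)$ from below. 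This analytic strategy is carried out in detail by Wong in \cite[Theorem 3.1]{Peng-Jie}, whose argument I would follow.

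For the core step, I would exploit the solvability of the normal closure $\widetilde K/\mathbb{Q}$: by Brauer induction together with Aramata--Brauer applied to $\widetilde K/K^+$, the quotient $\zeta_K/\zeta_{K^+}$ is entire and factorises as a product $\prod_\chi L(s,\chi)$ of one-dimensional Artin (equivalently, abelian Hecke) L-functions attached to characters on intermediate subfields of the solvable tower. I would then insert the effective lower bounds on $L(1,\chi)$ due to Stark and refined by Louboutin \cite{Louboutin}, which essentially give $L(1,\chi) \geq c/\log \mathfrak{f}_\chi$ away from a possible exceptional real zero. Repackaging the resulting conductor product purely in terms of $f$ produces \eqref{equation, 1st bound Peng-Ji}, while allowing part of the conductor to be absorbed by $d_{K^+}$ through a free parameter $\kappa \in [\tfrac{1}{8\log d_K}, 1]$ produces \eqref{equation, 2nd bound Peng-Ji}. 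For part (ii), I would replace the Stark--Louboutin estimate by Siegel's ineffective bound $L(1,\chi) \gg_\kappa d_{K^+}^{-\kappa}$, which shifts the exponent of $d_{K^+}$ from $\tfrac12 - \tfrac1n - \kappa$ to $\tfrac12 - \tfrac1{2n} - \kappa$ and that of $f$ from $\tfrac12 - \tfrac1{2n}$ to $\tfrac12 - \tfrac1{4n}$, at the cost of the ineffective constant $\widetilde c_3$ in \eqref{equation, 3rd bound Peng-Ji}.

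The hardest part will be the uniformity of the Brauer-induction bookkeeping: I must track the orders of the induced characters with enough precision to produce the explicit denominator $n(2n)^{e(2n)}\delta(2n)$ appearing in the bounds, with $e(\cdot)$ and $\delta(\cdot)$ as in \eqref{delta(n)}. These factors encode the maximal $p$-adic depth of the solvable tower, and extracting them cleanly from Brauer's theorem (rather than an abstract $O_n(1)$-dependence) is the delicate point. A secondary obstacle will be isolating the single possibly exceptional Siegel character from the product and controlling its contribution by a factor of $c(\kappa)^n$; this is standard but must be carried out uniformly across the family $\mathcal{F}$, which is precisely what forces the dichotomy between the effective bounds in part (i) and the ineffective improvement in part (ii).
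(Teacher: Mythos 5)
The paper does not actually prove this theorem: its ``proof'' consists solely of the citations to Wong's paper (\cite[Theorem 3.1]{Peng-Jie} for part (i) and the proof of Theorem 1.2 there for part (ii)), which is exactly where your proposal also ultimately lands, so the two approaches coincide. Your additional outline of the internal mechanics of Wong's argument is broadly faithful but goes beyond anything the paper supplies, and it contains small slips you would need to repair if you carried it out in full --- e.g.\ the relative class number formula produces $\sqrt{d_K/d_{K^+}}=\sqrt{d_{K^+}f}$ rather than $\sqrt{f}$ (which is where the $d_{K^+}$ exponent in \eqref{equation, 2nd bound Peng-Ji} actually originates), and $\zeta_K/\zeta_{K^+}$ is holomorphic at $s=1$, so one takes its value there rather than a residue.
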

\begin{proof}
For (i) see \cite[Theorem 3.1]{Peng-Jie}, and for  (ii) See   \cite[Proof of Theorem 1.2]{Peng-Jie}.
\end{proof}

\begin{proposition} \label{proposition, bounds for hk-}
In the family of CM-fields, the following assertions hold.
\begin{itemize}
	\item[(i)] There exists a constant $C>0$ such that for any abelian CM-field $K$ with $d_K >C$, we have
\begin{equation*}
h_{K}^{-} \geq d_K^{\frac{1}{16}}.
\end{equation*} 
Moreover, if $[K:\mathbb{Q}] \geq 6$, then the constant $C$ is effective.
	\item[(ii)] There exists an ineffective constant $c>0$ such that for any CM-field $K $with $[K:\mathbb{Q}]=4$, we have
	\begin{equation*}
	h_K^- \geq c d_K^{\frac{1}{12}}.
	\end{equation*}	
\item[(iii)] If $K$ is a CM-field with solvable normal closure, $[K:\mathbb{Q}]=2n \geq 6$,  and $d_K=d_{K^+}^2f$, then there is an effective {constant} $c(n)$, where $\lim_{n \rightarrow \infty} c(n) =\infty$, such that
\begin{equation*}
h_K^- \geq c(n) f^{\frac{1}{3}}.
\end{equation*}
	
\item[(iv)] If $K$ is a CM-field with solvable normal closure such that $[K:\mathbb{Q}]=2n \geq 6$, then there is an effective {constant} $b(n) >0$, such that 
\begin{equation*}
h_K^- \geq b(n) d_K^{\frac{1}{24}}.
\end{equation*}
\end{itemize}
\end{proposition}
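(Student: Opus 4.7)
The plan is to derive all four bounds from Theorem \ref{theorem,Peng-Jie}, supplemented by Siegel's theorem for the imaginary quadratic case and the ineffective bound for quartic CM-fields from Murty \cite{Murty}.

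For part (iii), I would start from the effective bound \eqref{equation, 1st bound Peng-Ji}. Since $[K:\mathbb{Q}]=2n\geq 6$ forces $n\geq 3$, the exponent $\tfrac12-\tfrac{1}{2n}$ is at least $\tfrac13$, so $f^{1/2-1/(2n)}\geq f^{1/3}$ for $f\geq 1$. Setting $c(n):=c_3 c_4^n/(n(2n)^{e(2n)}\delta(2n))$, the factor $c_4^n$ is exponential in $n$, while $(2n)^{e(2n)}\delta(2n)$ is only subexponential because $e(2n)=O(\log n)$; hence $c(n)\to\infty$ effectively.

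For part (iv), I would apply the effective bound \eqref{equation, 2nd bound Peng-Ji} with the fixed choice $\kappa=1/12$ (admissible once $d_K\geq e^{3/2}$). Writing $d_K=d_{K^+}^2 f$, for $n\geq 3$ the $d_{K^+}$-exponent equals $\tfrac12-\tfrac1n-\tfrac1{12}\geq \tfrac1{12}$ and the $f$-exponent equals $\tfrac12-\tfrac1{2n}\geq \tfrac13\geq \tfrac1{24}$. Dropping the excess factor $f^{1/3-1/24}\geq 1$ yields $h_K^-\geq b(n)\,d_{K^+}^{1/12}f^{1/24}=b(n)\,d_K^{1/24}$ with $b(n)$ effective.

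For part (ii), I would quote Murty's ineffective lower bound \cite{Murty} for the relative class number of quartic CM fields directly. Part (i) then splits by degree: when $[K:\mathbb{Q}]=2$, Siegel's theorem gives $h_K\gg_\varepsilon d_K^{1/2-\varepsilon}$, which dominates $d_K^{1/16}$ once $d_K$ is large (ineffective); when $[K:\mathbb{Q}]=4$, part (ii) immediately implies $h_K^-\geq d_K^{1/16}$ for $d_K$ large (ineffective through the constant of (ii)); and when $[K:\mathbb{Q}]\geq 6$, I would apply \eqref{equation, 2nd bound Peng-Ji} again, this time with, say, $\kappa=1/32$, so that for every $n\geq 3$ the $d_{K^+}$-exponent strictly exceeds $\tfrac18$ and the $f$-exponent strictly exceeds $\tfrac1{16}$. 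The strict excess in either exponent absorbs the effective constant into $d_K^{1/16}$ once $d_K$ is large enough, giving an effective threshold beyond which $h_K^-\geq d_K^{1/16}$. The main obstacle I anticipate is the calibration of $\kappa$: the $d_{K^+}$-exponent $\tfrac12-\tfrac1n-\kappa$ in \eqref{equation, 2nd bound Peng-Ji} is tightest at $n=3$, so $\kappa$ has little room, and $\kappa$ also controls the constant $c(\kappa)^n$, so it cannot be taken too small either; the low-degree cases in part (i) are genuinely ineffective, since they must pass through Siegel or Murty, which is why the constant $C$ in (i) is only effective from $[K:\mathbb{Q}]\geq 6$ onward.
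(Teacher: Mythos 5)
Your parts (iii) and (iv) follow the paper's proof essentially verbatim: part (iii) is the effective bound \eqref{equation, 1st bound Peng-Ji} with $\tfrac12-\tfrac1{2n}\geq\tfrac13$ and $c(n)=c_3c_4^n/(n(2n)^{e(2n)}\delta(2n))\to\infty$, and part (iv) is \eqref{equation, 2nd bound Peng-Ji} with $\kappa=1/12$, which after substituting $d_K=d_{K^+}^2f$ gives the exponent $\tfrac14-\tfrac1{2n}-\tfrac1{24}\geq\tfrac1{24}$ on $d_K$. For part (ii) the paper does not quote Murty directly: it observes that a quartic field has normal closure of degree at most $24$, hence solvable, and applies Wong's ineffective refinement \eqref{equation, 3rd bound Peng-Ji} with $n=2$ and $\kappa=1/12$ to obtain $h_K^-\geq c\,d_{K^+}^{1/6}f^{3/8}\geq c\,d_K^{1/12}$. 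Your appeal to an unspecified lower bound of \cite{Murty} leaves the exponent $1/12$ unaccounted for; you should either pin down the precise statement you are citing or run the $n=2$ case of \eqref{equation, 3rd bound Peng-Ji} as the paper does.

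The genuine gap is in your part (i) for degrees $\geq 6$. The paper simply cites the proof of Theorem 1 in \cite{Louboutin}, where $h_K^-\geq d_K^{1/16}$ is established for all abelian CM-fields. You instead apply \eqref{equation, 2nd bound Peng-Ji} with $\kappa=1/32$ and assert that the strict excess of the exponents over $\tfrac18$ and $\tfrac1{16}$ ``absorbs the effective constant once $d_K$ is large enough.'' But that constant is $c_3c(\kappa)^n/\bigl(n(2n)^{e(2n)}\delta(2n)\bigr)$, which depends on $n$, and $n$ is unbounded in the family of abelian CM-fields; Theorem \ref{theorem,Peng-Jie} only guarantees $c(\kappa)>0$, not $c(\kappa)\geq 1$, so this constant may tend to $0$ and the threshold on $d_K$ needed to absorb it may grow without bound as $n$ grows. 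As written, you do not obtain a single constant $C$ valid for the whole family. To repair this you would need either a uniform lower bound on the constant or an extra input such as $[K:\mathbb{Q}]=o(\log d_K)$ for abelian fields (Louboutin's lemma quoted before Corollary \ref{corollary, 1/2 is not limit abelian}), which makes the constant $d_K^{o(1)}$ and hence absorbable by the exponent excess; either way the effectiveness claim for $[K:\mathbb{Q}]\geq 6$ requires this to be carried out explicitly. The quadratic case via Siegel and the quartic case via part (ii) are fine and, as you correctly note, necessarily ineffective.
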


\begin{proof}
(i) See \cite[Proof of Theorem 1] {Louboutin}.

(ii) If $[K:\mathbb{Q}]=4$, then $[\tilde{K}:\mathbb{Q}] \leq 24$, where $\tilde{K}$ denotes the normal closure of $K$ over $\mathbb{Q}$. Hence $\tilde{K}/\mathbb{Q}$ is a solvable extension. Write $d_K=d_{K^+}^2f$. Then, using the lower bound \eqref{equation, 3rd bound Peng-Ji} for $n=2$ and $\kappa=\frac{1}{12}$, there is an ineffective constant $c>0$ such that, for $d_K \geq e^{\frac{3}{2}}$,
\begin{align*}
h_K^- \geq 
c d_{K^+}^{\frac{1}{6}} f^{\frac{3}{8}} \geq c d_K^{\frac{1}{12}}.
\end{align*}

(iii) By the lower bound \eqref{equation, 1st bound Peng-Ji} given in Theorem \ref{theorem,Peng-Jie}, we have
\begin{equation*}
	h_K^- 
	\geq c(n) f^{\left(\frac{1}{2}-\frac{1}{6}\right)}=c(n)f^{\frac{1}{3}},  
\end{equation*}
where
\begin{equation*}
c(n):=\frac{c_3 c_4^n}{n (2n)^{e(2n)} \delta (2n)}. 
\end{equation*}
 Note that $c_4>1$ and by \eqref{delta(n)}, $e(n)\leq \log{n}/\log{2}$. Hence, $\lim_{n\rightarrow \infty} c(n)=\infty$.

(iv) Let $\kappa \in \left[\frac{1}{8 \log {d_K}}, 1\right]$ be given.  
Employing $d_K=d_{K^+}^2 f$ in \eqref{equation, 2nd bound Peng-Ji} we get
\begin{equation} \label{equation, hK- geq kappa/20}
	h_K^- \geq  \frac{c_3 c(\kappa)^n}{n(2n)^{e(2n)} \delta(2n)} d_K^{(\frac{1}{4}-\frac{1}{2n}-\frac{\kappa}{2})} f^{(\frac{1}{4}+\frac{\kappa}{2})}.
\end{equation}
For $\kappa=\frac{1}{12}$, from \eqref{equation, hK- geq kappa/20}, we get
\begin{equation*}
	h_K^- \geq b(n) d_K^{\frac{1}{24}}, \quad \text{for} \, \, d_K \geq e^{3/2},
\end{equation*}
where 
\begin{equation} \label{equation, b(n)}
b(n)=\frac{c_3 c(1/12)^n}{n(2n)^{e(2n)} \delta(2n)}.
\end{equation}

\end{proof}

We are now ready to prove our first result.

\begin{proof}[Proof of Theorem \ref{theorem,CM}.] 
(i) Let $M, \epsilon >0$ and for $K \in \mathcal{F}$ set
\begin{equation}
\label{set1}
\frac{(h_K^-)^{\epsilon}}{\prod_{p \mid f}e_p(K/\mathbb{Q})} < M.
\end{equation}

We consider cases. If $[K:\mathbb{Q}]=4$,  then by
 employing part (ii) of Proposition \ref{proposition, bounds for hk-} in \eqref{set1}, and $d_K=d_{K^+}^2 f$, we get
\begin{equation}
\label{set2}
 \prod_{p \mid d_{K^+}} p^{\frac{ m_p \epsilon}{16}} 
\prod_{p \mid f} \frac{p^{\frac{m_p \epsilon}{16}} }{e_p(K/\mathbb{Q})} \leq \frac{(h_K^-)^{\epsilon}}{\prod_{p \mid f} e_p(K/\mathbb{Q})} <M,
\end{equation}
where $d_K=\prod_{p\mid d_K} p^{m_p}$. By \cite[Chapter III, Proposition 8]{Lang} we have 
$e_p(K/\mathbb{Q})\leq 2m_p$. Hence, from \eqref{set2} we have
\begin{equation}
\label{set3}
f(d_K):= \prod_{p \mid d_{K^+}}
p^{\frac{ m_p \epsilon}{16}} \prod_{p \mid f} \frac{p^{\frac{m_p \epsilon}{16}} }{2m_p} \leq \frac{(h_K^-)^{\epsilon}}{\prod_{p \mid f} e_p(K/\mathbb{Q})} <M.
\end{equation}
By \cite[Theorem 316]{Hardy}, $ f(d_K) \rightarrow \infty$, as $d_K\rightarrow \infty$.
Hence, from \eqref{set3} we conclude that the number of $K \in \mathcal{F}$ with 
 $[K:\mathbb{Q}]=4$ satisfying \eqref{set1} is finite. 

Using a similar argument and employing part (i) of Proposition \ref{proposition, bounds for hk-}, we have that the number of $K \in \mathcal{F}$ with $[K:\mathbb{Q}]=2$
satisfying \eqref{set1} is finite (Note that in this case, $d_{K^+}=1$ as $K^+=\mathbb{Q}$.) 

Now assume that $2n=[K:\mathbb{Q}] \geq 6$, $K \in \mathcal{F}$, and \eqref{set1} holds.
Then, from part (iii) of Proposition \ref{proposition, bounds for hk-}, we have
\begin{equation*}
c(n)^{\epsilon} \prod_{p | f} \frac{p^{\frac{m_p \epsilon}{3}}}{e_p(K/\mathbb{Q})} \leq \frac{(h_K^-)^{\epsilon}}{\prod_{p \mid f} e_p(K/\mathbb{Q})} < M.
\end{equation*}
Since $c(n) \rightarrow \infty$, as $n \rightarrow \infty$, and $e_p(K/\mathbb{Q})\leq 2m_p$ implies the product over $f$ is bounded below by some positive constant, we conclude that the degrees $n$ for such $K$'s are bounded. Now from part (iv) of Proposition \ref{proposition, bounds for hk-} and \eqref{set1}, we get
\begin{equation*}
b(n)^{\epsilon} \prod_{p \mid d_{K^+}} p^{\frac{ m_p \epsilon}{{24}}} \prod_{p | f} \frac{p^{\frac{m_p \epsilon}{24}}}{e_p(K/\mathbb{Q})} \leq \frac{(h_K^-)^{\epsilon}}{\prod_{p \mid f} e_p(K/\mathbb{Q})} < M,
\end{equation*}
 where $b(n)$ is the function given by \eqref{equation, b(n)}.
Applying $e_p(K/\mathbb{Q})\leq 2m_p$ in  the above inequality yields
\begin{equation}\label{set4}
j(d_K):=  \prod_{p \mid d_{K^+}} p^{\frac{ m_p \epsilon}{{ 24}}} \prod_{p | f} \frac{p^{\frac{m_p \epsilon}{24}}}{2 m_p} \leq \frac{(h_K^-)^{\epsilon}}{b(n)^{\epsilon}\prod_{p \mid f} e_p(K/\mathbb{Q})} < \frac{M}{b(n)^{\epsilon}}.
\end{equation}

As above, since by \cite[Theorem 316]{Hardy}, $j(d_K) \rightarrow \infty$, as $d_K\rightarrow \infty$, and  $b(n)$ is bounded, as $n$ is bounded, then \eqref{set4} shows that, the number of $K \in \mathcal{F}$ with $[K:\mathbb{Q}]\geq 6$ satisfying \eqref{set1} is finite. 

Hence, for given $M,\epsilon >0$, there are only finitely many $K \in \mathcal{F}$ satisfying \eqref{set1}. Thus, the claimed assertion holds.

(ii) Similar to part (i), for $M, \epsilon >0$ and  $K \in \mathcal{F}$ set
\begin{equation}
\label{set5}
\frac{(h_K^-)^{\epsilon}}{\prod_{p \mid d_K}e_p(K/\mathbb{Q})} < M
\end{equation}
and consider cases.

Suppose that there are infinitely many CM-fields $K$ in the family $\mathcal{F}$ such that $K/\mathbb{Q}$ is not abelian and $ 2n=[K:\mathbb{Q}]$ remains bounded as $d_K \rightarrow \infty$. 
Note that for such fields $K \in \mathcal{F}$, we must have $[K:\mathbb{Q}]=2n \geq 4$. Now, for $2n \geq 6$, by employing part (iv) of 
Proposition \ref{proposition, bounds for hk-} and 
$e_p(K/\mathbb{Q})\leq 2m_p$, from \eqref{set5} we get
\begin{equation}
\label{set6}
r(d_K):= \prod_{p \mid d_K} \frac{p^{\frac{m_p \epsilon}{24}} }{2m_p} \leq \frac{(h_K^-)^{\epsilon}}{b(n)^{\epsilon}\prod_{p \mid d_K} e_p(K/\mathbb{Q})} <\frac{M}{b(n)^\epsilon}.
\end{equation}
Since, by \cite[Theorem 316]{Hardy}, $r(d_K) \rightarrow \infty$ as $d_K\rightarrow \infty$, and  $b(n)$ is bounded, as $n$ is bounded, then \eqref{set6} shows that, the number of non-abelian CM-fields $K \in \mathcal{F}$ satisfying \eqref{set5} is finite. 

An identical argument which uses part (i) (respectively part (ii)) of 
Proposition \ref{proposition, bounds for hk-},  instead of part (iv), establishes the finiteness of abelian  (respectively quartic non-Galois) fields in the family $\mathcal{F}$ satisfying \eqref{set5}.
\end{proof}

\begin{proof}[Proof of Theorem \ref{theorem, Rk=O(dK)}.]
Since \eqref{BS-condition2} holds, by Brauer-Siegel's theorem,
\begin{equation}
\label{Brauer-Siegel2}
\lim_{d_K \rightarrow \infty} \frac{\log (h_K R_K)}{\log{d_K}}=\frac{1}{2}.		
\end{equation}
Let $\eta>0$ be such that 
\begin{equation}
\label{limsup3}
\limsup_{d_K\rightarrow \infty} \frac{\log{R_K}}{\log{d_K}}<\frac{1}{2}-2\eta.
\end{equation}
(Note that by \eqref{limsup2} such $\eta$ exists.)
Hence, from \eqref{Brauer-Siegel2} and \eqref{limsup3}, there exists a $C_{\eta}>0$ such that for $d_K>C_{\eta}$ we have
\begin{equation*}
\frac{1}{2}-\eta <\frac{\log {(h_K R_K)}}{\log{d_K}}<\frac{\log {h_K}}{\log{d_K}} +\frac{1}{2}-2\eta.
\end{equation*}
Thus, for $d_K > C_{\eta}$, we have $h_K > {d_K}^{\eta}$. Now, let 
	$d_K=\prod_{p \mid d_K} p ^{m_p}.$
By \cite[Chapter III, Proposition 8]{Lang} we have 
$e_p(K/\mathbb{Q})\leq 2m_p$.
Then, for a given $\epsilon>0$ and $K$ in the family $\mathcal{F}$  with $d_K>C_{\eta}$, we have
\begin{equation*}
\frac{(h_K)^{\epsilon}}{\prod_{p \mid d_K} e_p(K/\mathbb{Q})} > \prod_{p \mid d_K} \frac{(p^{m_p})^{\eta \epsilon}}{e_p(K/\mathbb{Q})} \geq  \prod_{p \mid d_K} \frac{(p^{m_p})^{\eta \epsilon}}{2 m_p}.
\end{equation*}
By \cite[Theorem 316]{Hardy} the right-hand side of the above inequality approaches $\infty$ as $d_K \rightarrow \infty$. Thus, the result holds.
\end{proof}

\section{Proof of Theorem \ref{theorem,classification}}\label{four}
\begin{proof} 
(i) By part (i) of Proposition \ref{proposition, Po(K) and gK for K cyclic}, we have $\# \Po(K)=g_K$ for an imaginary quadratic field $K$. Now the assertion follows from Theorem \ref{theorem, Weinberger}.

(ii) By Proposition \ref{proposition, Po(K) and gK for K imaginary bi-quadratic}, we have
\begin{equation}  \label{equation, order of Po divides gK bi-quadratic}
	\# \Po(K) = \left\{
	\begin{array}{ll}
		g_K/2, \,  & \, \textit{if} \, \, W_K=N_{K^+/\mathbb{Q}}(\epsilon_{K^+})=1, \, \\
		& \\
		g_K, \, & \, \textit{otherwise,} \\	
	\end{array}
	\right.
\end{equation} 
which implies $\#\Po(K) \leq g_K$. Since $ g_K \leq h_K$,  the number fields in \cite[Table III]{CK} (also in \cite[Table 2]{Miyada}),
consisting of all  imaginary bi-quadratic fields $K$ with $g_K=h_K$, form an \emph{initial list} of all such $K$ with $\Po(K)=\Cl(K)$.  Now it suffices to exclude those fields $K$ from \cite[Table III]{CK}  for which $\#\Po(K)=g_K/2$, and for the remaining cases we will have $\#\Po(K)=h_K$. By \eqref{equation, order of Po divides gK bi-quadratic}, the desired list is formed by all the fields $K$ in \cite[Table III]{CK}  for which $W_K \neq N_{K^+/\mathbb{Q}}(\epsilon_{K^+})$. To check this inequality, we use PARI/GP\footnote{\url{https://pari.math.u-bordeaux.fr/}} to compute $W_K$ and $N_{K^+/\mathbb{Q}}(\epsilon_{K^+})$ and compare them.  For the cases given in Table \ref{table, TZ}, the software was not able to compute $N_{K^+/\mathbb{Q}}(\epsilon_{K^+})$ for which we use some results of \cite{TZ}, as stated in the third column of Table \ref{table, TZ}, to compute $\# \Po(K)$. For all these cases, we conclude that $\#\Po(K)\neq h_K$.
\begin{table}
\begin{center}
	\begin{tabular}{|c |c |l |c|} 
		\hline
		$m$ & $n$ & $\# \Po(K)$ & $g_k=h_K$ \\ [0.5ex] 
		\hline\hline
		37,58 & 163 & 1 by \cite[Theorem 5]{TZ} & 2 \\ 
		\hline
		67 & 123,235,403 & 1 by \cite[Theorem 5]{TZ} & 2 \\ 
		\hline
		91 & 163 & 1 by \cite[Theorem 5]{TZ} & 2 \\ 
		\hline
		91 & 403 & 1 by \cite[Theorem 7]{TZ} & 2 \\ 
		\hline
		115 & 163 & 1 by \cite[Theorem 5]{TZ} & 2 \\ 
		\hline
		115 & 235 & 1 by \cite[Theorem 7]{TZ} & 2 \\ 
		\hline
		163 & 187,235,267,403 & 1 by \cite[Theorem 5]{TZ} & 2 \\ 
		\hline
		1 & 30, 42, 70, 78, 190 & 1 by \cite[Lemma 6]{TZ} & 4 \\ 
		\hline
		13 & 78 & 1 by \cite[Theorem 5]{TZ} & 4 \\ 
		\hline
	\end{tabular}
	\caption{The list of imaginary bi-quadratic fields  $K=\mathbb{Q}(\sqrt{-m},\sqrt{-n})$ with $g_K=h_K$ for which $\# \Po(K)$ is computed by using the results of \cite{TZ}.}
		 \label{table, TZ}
\end{center}
\end{table}

(iii) The list given in \cite[Table V]{CK} (or \cite[Table 3]{Miyada})
provides the complete classification of all imaginary tri-quadratic fields $K$ with $g_K=h_K$. We observe that $h_K=1$ for any field $K$ in this list. On the other hand, by  Proposition \ref{proposition, Po(K) and gK for K imaginary tri-quadratic}, the order of $\Po(K)$ divides the genus number $g_K$, hence $\# \Po(K) \leq g_K \leq h_K$. Consequently, Table \ref{tab, tri-quadratic Miyada} gives the list of all imaginary tri-quadratic fields with $\Po(K)=\Cl(K)$.

(iv) The complete classification of all imaginary non-quadratic cyclic fields $K$ with $g_K=h_K$ is given in \cite[Table 1]{CK}. Part (i) of Proposition \ref{proposition, Po(K) and gK for K cyclic}  shows that this is also the complete classification of  all imaginary non-quadratic cyclic fields whose P\'olya groups are equal to their class groups.
\end{proof}
\section{Proofs of other classification theorems}\label{five}

We start by expressing the order of the P\'olya group of a quadratic field in terms of the divisor function.

\begin{lemma} \label{lemma, order of Po is less than tau imaginary quadratic}
	Let $K=\mathbb{Q}(\sqrt{D})$ be a quadratic field, where
	$D$ is a square-free integer. Let
		\begin{equation*}
		c_K=
		\begin{cases} 
			1/2	, &   \textit{if} ~~   D \equiv 1 \, (\mathrm{mod}\, 4), \\
			1/3,  &   \textit{if} ~~ D \equiv 3 \, (\mathrm{mod}\, 4), \\
			1/4,   &  \textit{if} ~~ D \equiv 2 \, (\mathrm{mod}\, 4),\\
		\end{cases}
	\end{equation*}
	and denote by $\tau$ the number of divisors function. Then,
\begin{equation} \label{equation, Po is alpha times tau}
	\# \Po(K)= \begin{cases} 
	\frac{1}{2}	c_K \tau(d_K), &   \textit{if $D>0$ and $K$ has no units of negative norm}, \\
		c_K \tau(d_K),   &  \textit{otherwise}.\\
		\end{cases}
\end{equation}
\end{lemma}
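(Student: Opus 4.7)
The plan is to reduce the claim directly to the formula \eqref{Hilbert} from the introduction, which already expresses $\#\Po(K)$ purely in terms of $s_K$, the number of primes that ramify in $K/\mathbb{Q}$. First I would observe that the dichotomy appearing in \eqref{equation, Po is alpha times tau}, namely ``$D>0$ and $K$ has no units of negative norm,'' is identical to the condition ``$D>0$ and $N_{K/\mathbb{Q}}(\epsilon_K)=1$'' of \eqref{Hilbert}: every unit of a real quadratic field is of the form $\pm \epsilon_K^n$, and $N_{K/\mathbb{Q}}(-1)=1$, so the unit group contains an element of negative norm if and only if $N_{K/\mathbb{Q}}(\epsilon_K)=-1$. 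Consequently, the two factorization branches of \eqref{equation, Po is alpha times tau} correspond exactly to $\#\Po(K)=2^{s_K-2}$ and $\#\Po(K)=2^{s_K-1}$ respectively, and the extra factor $1/2$ in the first branch is absorbed uniformly into the constant $c_K$.

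Next I would perform a case analysis on $D \pmod 4$ to compute $\tau(d_K)$ in terms of $s_K$:

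\begin{itemize}
\item If $D\equiv 1\pmod 4$, then $d_K=|D|$ is square-free with exactly $s_K$ prime factors (each ramified), so $\tau(d_K)=2^{s_K}$, and therefore $2^{s_K-1}=\tfrac{1}{2}\tau(d_K)$, matching $c_K=1/2$.
\item If $D\equiv 3\pmod 4$, then $d_K=4|D|=2^2\prod_{i=1}^{s_K-1} p_i$ with the $p_i$ the odd prime divisors of $D$ and $2$ the additional ramified prime, giving $\tau(d_K)=3\cdot 2^{s_K-1}$, and therefore $2^{s_K-1}=\tfrac{1}{3}\tau(d_K)$, matching $c_K=1/3$.
\item If $D\equiv 2\pmod 4$, then $D$ is even and square-free, so $d_K=4|D|=2^3\prod_{i=1}^{s_K-1} p_i$ with the $p_i$ the odd prime divisors of $D$, giving $\tau(d_K)=4\cdot 2^{s_K-1}$, and therefore $2^{s_K-1}=\tfrac{1}{4}\tau(d_K)$, matching $c_K=1/4$.
\end{itemize}

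Combining these three identities with the two branches of \eqref{Hilbert} yields the claimed formula \eqref{equation, Po is alpha times tau}, since in the first branch one simply replaces $2^{s_K-1}$ by $2^{s_K-2}$, which accounts for the additional factor of $1/2$. There is no genuine obstacle here; the only care needed is in the second and third cases, where the prime $2$ contributes a divisor count of $3$ or $4$ rather than $2$, which is exactly the source of the different values of $c_K$. The proof is therefore a short deduction from \eqref{Hilbert} together with the standard description of the discriminant of a quadratic field in each residue class modulo $4$.
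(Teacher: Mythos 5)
Your proof is correct and follows essentially the same route as the paper: compute $\tau(d_K)$ in terms of $s_K$ in each residue class of $D$ modulo $4$ and then invoke \eqref{Hilbert}. The only addition is your explicit (and correct) remark that ``no units of negative norm'' is equivalent to $N_{K/\mathbb{Q}}(\epsilon_K)=1$, which the paper leaves implicit.
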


\begin{proof}
	For  $n =\prod_{i=1}^k p_i^{\alpha_i}$ , we have 
		$\tau\left(n\right)= \prod_{i=1}^k(\alpha_i +1).$
	Thus,	
	\begin{equation*}
		\tau(d_K)=\begin{cases} 
			2^{s_K}, &    \textit{if} ~~ D \equiv 1 \, (\mathrm{mod}\, 4), \\
			3 \cdot 2^{s_K-1}, &    \textit{if} ~~ D \equiv 3 \, (\mathrm{mod}\, 4), \\
			2^{s_K+1}, & \,  \textit{if} ~~ D \equiv 2 \, (\mathrm{mod}\, 4),\\
		\end{cases}
	\end{equation*}
	where $s_K$ denotes the number of ramified primes in $K/\mathbb{Q}$. Now the assertion follows from \eqref{Hilbert}.
	\end{proof}

In the next lemma we establish an explicit upper bound for the divisor function.

\begin{lemma} \label{lemma, tau over n1/4}
	Let $n=2^{\beta} \prod_{i=1}^r q_i$ be a positive integer,  where $q_i$'s are distinct odd prime numbers and $\beta$ is a non-negative integer.
	Then 

	\begin{equation} \label{equation, Tau(n) over n1/4}
		\frac{\tau(n)}{\sqrt[4]{n}} \leq \begin{cases} 
			2.8908, &    \textit{if} ~~ \beta=0, \\
			\left(\frac{	\beta +1}{\sqrt[4]{2}}\right)\cdot 2.8908 , &    \textit{if} ~~ \beta \geq 1.
		\end{cases}
	\end{equation}
In particular, for a quadratic field $K=\mathbb{Q}(\sqrt{D})$, we have
	\begin{equation} \label{equation, tau over fourth root of dK 2}
		\tau(d_K) < c_K' \sqrt[4]{d_K},
	\end{equation}
	where
	\begin{equation*}
		c_K'=  \left\{
		\begin{array}{ll}
			2.8908, & \textit{if} ~~ D \equiv 1 \, (\mathrm{mod}\, 4),\\
			& \\
			7.2927, & \textit{if} ~~  D \equiv 3 \, (\mathrm{mod}\, 4),\\
			& \\
			9.7235,	& \textit{if} ~~ D \equiv 2 \, (\mathrm{mod}\, 4). \\
		\end{array}
		\right.
	\end{equation*}
	\end{lemma}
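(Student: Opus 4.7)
The plan is to exploit the multiplicativity of the exponents: writing $n=2^{\beta}\prod_{i=1}^{r}q_i$ with distinct odd primes $q_i$, one has $\tau(n)=(\beta+1)\cdot 2^{r}$ and $n^{1/4}=2^{\beta/4}\prod_i q_i^{1/4}$, so
\begin{equation*}
\frac{\tau(n)}{n^{1/4}}=\frac{\beta+1}{2^{\beta/4}}\prod_{i=1}^{r}\frac{2}{q_i^{1/4}}.
\end{equation*}
This decouples the $2$-part from the odd-prime part and reduces the first bound in \eqref{equation, Tau(n) over n1/4} to maximising $\prod_{i}(2/q_i^{1/4})$ over all finite collections of distinct odd primes.

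The key elementary observation is that $2/q^{1/4}>1$ if and only if $q<16$, i.e.\ precisely for $q\in\{3,5,7,11,13\}$, while $2/q^{1/4}\leq 1$ for every odd prime $q\geq 17$. A straightforward monotonicity argument therefore shows that the product is maximised at the set $\{3,5,7,11,13\}$: adjoining any odd prime $\geq 17$ cannot increase it, and omitting any one of these five small primes strictly decreases it. A direct numerical evaluation then gives
\begin{equation*}
\prod_{q\in\{3,5,7,11,13\}}\frac{2}{q^{1/4}}=\frac{32}{(15015)^{1/4}}\leq 2.8908,
\end{equation*}
which is the $\beta=0$ case of \eqref{equation, Tau(n) over n1/4}. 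For $\beta\geq 1$ I would combine this with the elementary inequality $2^{\beta/4}\geq 2^{1/4}$, which yields $(\beta+1)/2^{\beta/4}\leq (\beta+1)/\sqrt[4]{2}$, completing \eqref{equation, Tau(n) over n1/4}. The only mild obstacle here is the explicit numerical verification of the five-prime product, which is entirely routine; conceptually the proof is just ``threshold plus monotonicity.''

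Finally, to deduce \eqref{equation, tau over fourth root of dK 2} for a quadratic field $K=\mathbb{Q}(\sqrt{D})$ with $D$ squarefree, I would apply \eqref{equation, Tau(n) over n1/4} to $n=d_K$ and read off the $2$-adic valuation of $d_K$ from the classical formula for the quadratic discriminant. If $D\equiv 1\pmod{4}$ then $d_K=|D|$ is odd, so $\beta=0$ and the bound becomes $c_K'=2.8908$. If $D\equiv 3\pmod{4}$ then $d_K=4|D|$ with $|D|$ odd, so $\beta=2$ and $c_K'=(3/\sqrt[4]{2})\cdot 2.8908\approx 7.2927$. If $D\equiv 2\pmod{4}$ then squarefreeness of $D$ forces $|D|=2m$ with $m$ odd, so $d_K=8m$, $\beta=3$ and $c_K'=(4/\sqrt[4]{2})\cdot 2.8908\approx 9.7235$. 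In each case substitution of $\beta$ into the stronger bound $(\beta+1)/2^{\beta/4}\cdot 2.8908$ actually gives a slightly smaller constant than $c_K'$, so the stated strict inequality \eqref{equation, tau over fourth root of dK 2} follows immediately.
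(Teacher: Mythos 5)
Your proof is correct and follows essentially the same route as the paper: decompose $\tau(n)/n^{1/4}$ multiplicatively, observe that $2/p^{1/4}<1$ exactly for primes $p\geq 17$ so the odd part is bounded by $\prod_{2<p<17}2/p^{1/4}=32/(15015)^{1/4}<2.8908$, and use $2^{\beta/4}\geq 2^{1/4}$ for $\beta\geq 1$. Your explicit case analysis of the $2$-adic valuation of $d_K$ for the three residue classes of $D$ is just a spelled-out version of what the paper leaves as an ``immediate consequence.''
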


\begin{proof}
Since for each prime $p \geq 17$, we have $\frac{2}{\sqrt[4]{p}}<1$, then
	\begin{equation*}
		\frac{\tau(n)}{\sqrt[4]{n}} \leq  \left\{
		\begin{array}{ll}
			\displaystyle{\prod_{\substack{p \, \text{prime},\\ 2<p<17}}} \frac{2}{\sqrt[4]{p}}, &   \textit{if} ~~ \beta=0,\\
			& \\
			\frac{\beta +1}{\sqrt[4]{2^{\beta}}}	\displaystyle{\prod_{\substack{p \, \text{prime},\\ 2<p<17}}} \frac{2}{\sqrt[4]{p}}, &   \textit{if} ~~ \beta \geq 1.\\
		\end{array}
		\right.
	\end{equation*}
	The relation \eqref{equation, Tau(n) over n1/4} follows by noting that $2.8908$ is an upper bound for the product in the above inequality. The bound \eqref{equation, tau over fourth root of dK 2} is an immediate consequence of \eqref{equation, Tau(n) over n1/4}.
\end{proof}

\subsection{Proof of Theorem \ref{theorem, list imaginary [Cl(K):Po(K)]=2}} To prove Theorem \ref{theorem, list imaginary [Cl(K):Po(K)]=2}, we use Lemmas \ref{lemma, order of Po is less than tau imaginary quadratic}  and \ref{lemma, tau over n1/4} along with the following  conditional explicit lower bound, proved in \cite[Theorem 5]{Ihara06}, for the class numbers of imaginary quadratic fields. 

\begin{theorem} [{Ihara}] 
	\label{theorem, Ihara's upper bound}
	Let $K$ be an imaginary quadratic field with the absolute value discriminant $d_K$. Under GRH, if $\alpha_K:=\frac{1}{2} \log {d_K} > 1.16$ ( i.e., 
if $d_K \geq 11$), then
	\begin{equation} \label{equation, Ihara's upper bound}
		h_K > \frac{\frac{\pi}{6} \sqrt{{d_K}} -\alpha_K+b_1}{\alpha_K+2\log \alpha_K +b_2+c(\alpha_K)},
	\end{equation}
	with $b_1$, $b_2$, $c(\alpha_K)$ are given by
	\begin{equation*}
		\begin{cases} 
			b_1=2\log M+\log 2-4q_0=-0.34037-4q_0, \\
			b_2=2 \log M-2 \gamma_{\mathbb{Q}}+\log 2+1=-0.49480 \dots, &  \\
			c(t)=\frac{4 \log t+2}{t-1}, &  
		\end{cases}
	\end{equation*}
	where  $q_0=e^{-\pi \sqrt{{d_K}}}$, $M=0.596450134 \dots$, $\log M=-0.516759638 \dots$, and $\gamma_{\mathbb{Q}}$ is the Euler-Kronecker constant.
\end{theorem}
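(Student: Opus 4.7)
The plan is to reduce the claim to an explicit lower bound for $L(1,\chi_K)$ and then obtain that bound from a Weil-type explicit formula applied to the Dirichlet $L$-function $L(s,\chi_K)$. First I would invoke the analytic class number formula. Since $d_K \geq 11$ forces $w_K = 2$, we have
\begin{equation*}
h_K \;=\; \frac{\sqrt{d_K}}{\pi}\, L(1,\chi_K),
\end{equation*}
so the stated inequality is equivalent to
\begin{equation*}
L(1,\chi_K) \;>\; \frac{\tfrac{\pi^2}{6} - \pi\,\alpha_K/\sqrt{d_K} + \pi b_1/\sqrt{d_K}}{\alpha_K + 2\log\alpha_K + b_2 + c(\alpha_K)}.
\end{equation*}
The $\pi^2/6 = \zeta(2)$ appearing in the numerator, together with the $\alpha_K$ in the denominator, strongly suggests the bound comes from applying an explicit formula in which the test function is the Mellin/Fourier transform of something like $\mathbf{1}_{[1,\infty)}(n)/n^2$, and then letting the cutoff parameter be essentially $\sqrt{d_K}$.

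Concretely I would work with the Dedekind zeta function $\zeta_K(s) = \zeta(s)L(s,\chi_K)$ and its logarithmic derivative. Let $\gamma_K$ be the Euler-Kronecker constant from the Laurent expansion of $\zeta_K'/\zeta_K$ at $s=1$; then $\gamma_K = \gamma_\mathbb{Q} + L'(1,\chi_K)/L(1,\chi_K)$. Ihara's strategy is to write the Weil explicit formula for a one-parameter family of admissible test functions $\Phi_x$ (supported on $[-x,x]$ after a logarithmic change of variable), apply it to $L(s,\chi_K)$, and pair the arising identity with the defining series $\sum_n \chi_K(n)/n$ at $s=1$ via a suitable integration in the parameter $x$. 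Under GRH all nontrivial zeros contribute terms of size $O(\log x/x)$ to the zero-sum side, while the archimedean factors produce $\alpha_K + O(\log \alpha_K)$, and the prime-power sum produces the positive main term that integrates to $\zeta(2) = \pi^2/6$ times $\sqrt{d_K}$ once one sets $x = \sqrt{d_K}$. The constants $b_1, b_2$, $M$, and $q_0 = e^{-\pi\sqrt{d_K}}$ encode the exact archimedean contribution (from the $\Gamma$-factor in the functional equation of $L(s,\chi_K)$) and the tail of the prime sum beyond $\sqrt{d_K}$, while $c(\alpha_K) = (4\log\alpha_K + 2)/(\alpha_K - 1)$ absorbs the error from truncating the zero sum.

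The principal technical obstacle is the explicit bookkeeping. One must choose the test function and its scaling so that after integrating, the positive main term $\tfrac{\pi^2}{6}\sqrt{d_K}$ is isolated with completely explicit error terms, and one must estimate the zero sum under GRH using an auxiliary function (this is where the specific constant $M = 0.596450\ldots$ with $\log M = -0.516759\ldots$ enters, coming from the optimization of Ihara's $M$-function in his treatment of Euler-Kronecker constants) without invoking ineffective bounds. The denominator $\alpha_K + 2\log\alpha_K + b_2 + c(\alpha_K)$ must be shown to dominate $|\gamma_K|$ in absolute value up to the explicit constants, which amounts to a careful GRH-conditional estimate of $\sum_\rho \widehat{\Phi}_x(\rho)$ in terms of $\alpha_K = \tfrac{1}{2}\log d_K$. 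Once these optimizations are carried out and combined with the class number formula, the stated inequality follows; the proof is essentially a long but mechanical verification of constants after the analytic framework is in place.
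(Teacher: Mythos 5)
The first thing to note is that the paper does not prove this statement at all: it is imported verbatim as \cite[Theorem 5]{Ihara06} and used as a black box in the proof of Theorem \ref{theorem, list imaginary [Cl(K):Po(K)]=2}. So the only ``proof'' in the paper is the citation, and the real question is whether your reconstruction would actually establish Ihara's explicit inequality. Your opening reduction is correct: for $d_K\geq 11$ one has $w_K=2$, the class number formula gives $h_K=\frac{\sqrt{d_K}}{\pi}L(1,\chi_K)$, and the theorem is equivalent to the stated lower bound on $L(1,\chi_K)$. You also correctly identify the provenance of the machinery (Ihara's Euler--Kronecker constant $\gamma_K=\gamma_{\mathbb{Q}}+L'(1,\chi_K)/L(1,\chi_K)$, the Weil explicit formula with a one-parameter family of test functions, GRH to control the zero sum, and the constant $M$ coming from Ihara's auxiliary function).

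The genuine gap is that everything which constitutes the actual content of the theorem is deferred. The theorem is an \emph{explicit} inequality whose entire value lies in the specific constants $b_1$, $b_2$, $M$, $q_0$ and the specific shape $c(t)=(4\log t+2)/(t-1)$; your proposal asserts that these constants ``encode'' the archimedean contribution, the prime-sum tail, and the truncated zero sum, but never chooses the test function, never extracts the main term $\tfrac{\pi}{6}\sqrt{d_K}$ (the claim that the prime-power side ``integrates to $\zeta(2)$ times $\sqrt{d_K}$ once one sets $x=\sqrt{d_K}$'' is exactly the step that needs proof and is simply asserted), and never carries out the GRH-conditional zero-sum estimate that is supposed to yield $2\log\alpha_K+c(\alpha_K)$ in the denominator. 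Stripped of the unverified constants, what remains is only a qualitative statement of the form $h_K\gg\sqrt{d_K}/\log d_K$, which is not what is being claimed and cannot be substituted into the proof of Theorem \ref{theorem, list imaginary [Cl(K):Po(K)]=2}, where the explicit numerical thresholds $3.6\times 10^7$ and $4.1\times 10^8$ depend on the exact values of $b_1$, $b_2$ and $c(\alpha_K)$. As a blind proof attempt this is therefore a plausible research plan pointing at the right toolbox, not a proof; the honest course here (and the one the paper takes) is to cite Ihara's Theorem 5 directly rather than to re-derive it.
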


Now, we are ready to prove Theorem \ref{theorem, list imaginary [Cl(K):Po(K)]=2}.

\begin{proof}[Proof of Theorem \ref{theorem, list imaginary [Cl(K):Po(K)]=2}.]  
	By Lemmas \ref{lemma, order of Po is less than tau imaginary quadratic} and \ref{lemma, tau over n1/4} and Theorem \ref{theorem, Ihara's upper bound}, we have
	\begin{align} \label{equation, 2 Po(K) less than}
		\frac{2\cdot\#\Po(K)}{h_K} < \frac{c_K c_K' \sqrt[4]{d_K}\left( \frac{1}{2} \log {d_K} + 2 \log \log \sqrt{{d_K}} +b_2+\frac{4 \log \log \sqrt{{d_K}} +2}{\log \sqrt{{d_K}} -1} \right)}{\frac{\pi}{6} \sqrt{{d_K}}-\frac{1}{2} \log {d_K} +b_1},	
	\end{align}
	where 
 $b_1,b_2$ are the constants given in Theorem \ref{theorem, Ihara's upper bound}. 
 	
We define, for $K=\mathbb{Q}(\sqrt{-D})$ and $x>0$, the function $f_K(x)$ as
	\begin{equation*}
		f_K(x)=\frac{ c_K c_K' \sqrt[4]{x}\left(  \log \sqrt{x} + 2 \log \log \sqrt{x} +b_2+\frac{4 \log \log \sqrt{x} +2}{\log \sqrt{x} -1} \right)}{\frac{\pi}{6} \sqrt{x}- \log \sqrt{x} +b_1}.
	\end{equation*}
 We observe that
	\begin{equation*}
		\left\{
		\begin{array}{ll}
			f_K(x)<1, & \textit{if ~~ $x \geq 3.6\times 10^7$ and $-D \equiv 1 \, (\mathrm{mod}\, 4)$},\\
			& \\
			f_K(x) <1,	 & \textit{if ~~ $x \geq 4.1 \times 10^8$ and $-D \equiv 2,3 \, (\mathrm{mod}\, 4)$}.\\
		\end{array}
		\right.
	\end{equation*}
	Hence, by \eqref{equation, 2 Po(K) less than}, if $\left[\Cl(K): \Po(K)\right]=2$, under GRH, we must have 
	\begin{equation} \label{equation, upper bound over dk for 2Po(K)=Cl(K)}
		\left\{
		\begin{array}{ll}
			{d_K} < 3.6\times 10^7, & \textit{if} ~~ -D \equiv 1 \, (\mathrm{mod}\, 4),\\
			& \\
			{d_K} < 4.1  \times 10^8,  & \textit{if} ~~ -D \equiv 2,3 \, (\mathrm{mod}\, 4).\\
		\end{array}
		\right.
	\end{equation}
Using PARI/GP\footnote{\url{https://pari.math.u-bordeaux.fr/}} we compute $h_K$ and $\# \Po(K)$ for all imaginary quadratic fields $K$ with ${d_K}$ smaller than the bound \eqref{equation, upper bound over dk for 2Po(K)=Cl(K)} to obtain the classification given in Table \ref{tab, imaginiary quadratic t=2}.
\end{proof}

\subsection{Proof of Theorem \ref{theorem, finiteness Extended R-D type}} 
We need the following three lemmas. 
\begin{lemma}  \label{lemma, Degert}
If $K=\mathbb{Q}(\sqrt{\ell^2+r})$ is a real quadratic field of R-D type, then
	\begin{equation} \label{equation, f.u of R-D type} 
	u_K :=	\left\{
	\begin{array}{ll}
		\ell + \sqrt{\ell^2+r}, \, \, & \textit{if} ~~ |r|= 1,\\
		& \\
		\frac{	\ell + \sqrt{\ell^2+r}}{2}, \, \, & \textit{if} ~~ |r|= 4,\\
		& \\
		\frac{	2\ell^2+r + 2 \ell \sqrt{\ell^2+r}}{|r|}, \, \,  & \textit{if} ~~ |r|\neq 1,4,\\
	\end{array}
	\right.
\end{equation}
is the fundamental unit of $K$. More generally, $u_K$ is a unit if $K$ is of extended R-D type.
\end{lemma}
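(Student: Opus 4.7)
The lemma splits into two parts. The ``more generally'' assertion requires only that $u_K$ be a unit, while the first part requires fundamentality. My plan is to handle them in this order: first a direct algebraic verification for the extended R-D case, then invoke classical size estimates of Richaud-Degert type for the R-D case.

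To show $u_K\in\mathcal{O}_K$ I would treat each case separately. For $|r|=1$ it is immediate, since $\ell\in\mathbb Z$ and $\sqrt{D}\in\mathcal{O}_K$. For $|r|=4$, square-freeness of $D=\ell^2\pm 4$ forces $\ell$ to be odd, hence $D\equiv 1\pmod 4$, and the identity $u_K=\frac{\ell-1}{2}+\frac{1+\sqrt{D}}{2}$ puts $u_K$ in $\mathcal{O}_K=\mathbb Z\bigl[\frac{1+\sqrt{D}}{2}\bigr]$. For $|r|\neq 1,4$, the divisibility $r\mid 4\ell$ built into the definition of (extended) R-D type makes $2\ell/|r|$ and $(2\ell^2+r)/|r|$ integers, so $u_K\in\mathbb Z[\sqrt{D}]\subseteq\mathcal O_K$. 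A direct computation then gives $N_{K/\mathbb Q}(u_K)=-r,\,-r/4,\,+1$ in the three cases respectively, each of absolute value $1$; thus $u_K$ is a unit, proving the ``more generally'' assertion (since no size condition on $r$ was used, only the divisibility $r\mid 4\ell$ and square-freeness of $D$).

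For fundamentality I would use the Pell-type correspondence: every unit $\epsilon>1$ in $\mathcal{O}_K$ comes from a positive integer solution of $X^2-DY^2=\pm 4$ with $X\equiv Y\pmod 2$, and the fundamental unit is the one attached to the smallest positive $Y$. For $|r|=1,4$, the solution associated with $u_K$ has $Y=1$, which is patently minimal since $Y=0$ produces $X^2=\pm 4$ and no unit greater than $1$. For $|r|\neq 1,4$, the candidate is $Y=2\ell/|r|$, and this is where the restriction $-\ell<r\le\ell$ enters crucially: it guarantees that for every $1\le Y'<2\ell/|r|$ the quantity $DY'^2\pm 4$ is trapped strictly between the two consecutive squares $(\ell Y')^2$ and $(\ell Y'+1)^2$, so no intermediate Pell solution can exist. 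The main obstacle is precisely this squeeze step, which is sharp: relaxing to extended R-D, where $|r|>\ell$ is permitted, breaks the two-sided inequality and is exactly why the second half of the lemma is not claimed for extended R-D types. In practice I would either carry out this squeeze directly, or equivalently invoke the explicit periodic continued fraction expansion of $\sqrt{D}$ (or $(1+\sqrt{D})/2$) for R-D discriminants, whose short period lets one read the fundamental unit off the penultimate convergent.
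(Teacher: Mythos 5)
Your integrality argument in the case $|r|\neq 1,4$ has a genuine gap: the divisibility $r\mid 4\ell$ does \emph{not} imply that $2\ell/|r|$ and $(2\ell^2+r)/|r|$ are integers. Every divisor of $4\ell$ either divides $2\ell$ or has the form $4\ell_1$ with $\ell_1\mid\ell$, and in the second case $2\ell/|r|=\ell/(2\ell_1)$ need not be an integer. This already happens for genuine (non-extended) R-D types: take $\ell=15$, $r=12$, $D=237$ (square-free, $12\mid 60$, $-15<12\le 15$); then $u_K=\frac{77+5\sqrt{237}}{2}\notin\mathbb{Z}[\sqrt{237}]$. The element is still an algebraic integer, but only because $237\equiv 1\ (\mathrm{mod}\ 4)$, and that is exactly how the paper treats this subcase: when $r=\pm 4\ell_1$, square-freeness of $D=\ell^2\pm 4\ell_1$ forces $\ell$ to be odd, hence $D\equiv 1\ (\mathrm{mod}\ 4)$, and $u_K=\frac{(\ell^2/\ell_1\pm 2)+(\ell/\ell_1)\sqrt{D}}{2}$ lies in $\mathbb{Z}\bigl[\frac{1+\sqrt{D}}{2}\bigr]$ because both coefficients have the same parity. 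As written, your argument only covers those $r$ with $|r|\mid 2\ell$; you must add this case. Your norm computations and the $|r|=1,4$ cases are correct.

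On fundamentality you take a genuinely different route from the paper, which proves nothing here beyond integrality and simply quotes \cite[Theorem 1]{Degert}. Re-deriving Degert's theorem via the Pell correspondence is legitimate in principle, but your squeeze step is only a sketch and its edge cases are not free: for $r>0$ and $1\le Y'<2\ell/r$ the bound you get is $rY'^2+4<2\ell Y'+4$ rather than the needed $<2\ell Y'+1$, so the boundary values of $Y'$, the two norm signs, and the two residue classes of $D$ modulo $4$ all require separate attention. Since the paper outsources precisely this to Degert, you should either cite that result as well or commit to writing the squeeze (or the continued-fraction computation) out in full.
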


\begin{proof}
If $K$ is of R-D type, by \cite[Theorem 1]{Degert}, $u_K$ is the fundamental unit of $K$.
	Since $u_K$ has norm $\pm 1$, we only need to show that $u_K$ is an algebraic integer when $K$ is of extended R-D type. Let
	$D$ be an extended R-D type integer which is not of R-D type. Then $D=\ell^2+r$, where $r \mid 4 \ell$ and either $r > \ell$ or $r \leq -\ell$. 	If $|r|=4$, then for $r=4$ (resp. $r=-4$), $\ell \in \{1,2,3\}$ (resp. $\ell \in \{3,4 \}$.) It is straightforward to check that $u_K$ is an algebraic integer in these cases.
If $|r| \neq 4$, then	either $(\ell,r)=(1,2)$ or $\ell>1$ and $	r \in \{ -\ell, \pm 2 \ell_1, \pm 4 \ell_1 \}$ for some divisor $\ell_1$ of $\ell$. For $(\ell,r)=(1,2)$, we have $D=3$. In this case, $u_K=2+\sqrt{3}$ is an algebraic integer. Now let $\ell >1$. 
If $r=\pm 4 \ell_1$, then $D=\ell^2 \pm 4 \ell_1 \equiv 1 \, (\mathrm{mod}\, 4)$  (Note that in this case $\ell$ cannot be even, as $D$ is square-free.) Thus,
\begin{equation*}
		\frac{	2\ell^2+r + 2 \ell \sqrt{\ell^2+r}}{|r|}=\frac{(\ell^2/\ell_1 \pm 2)+(\ell^2/\ell_1)\sqrt{D} }{2}
\end{equation*}
 is an algebraic integer in $K$.
	Finally, if $r =-\ell,\pm 2 \ell_1$, then $r$ divides both $2 \ell^2+r$ and $2 \ell$ which implies that
	\begin{equation*}
\frac{	2\ell^2+r + 2 \ell \sqrt{\ell^2+r}}{|r|} \in \mathbb{Z}[\sqrt{D}].
	\end{equation*}\end{proof}

The following assertion is stated in \cite[p. 423]{MW}.

\begin{lemma}\label{lemma, log R_K less than log dK}
Let $K=\mathbb{Q}(\sqrt{D})$ be a real quadratic field of extended R-D type. Then $ R_K <\log (3 D)$, where $R_K$ denotes the regulator of $K$. 
 \end{lemma}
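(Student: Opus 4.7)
The plan is to bound $\epsilon_K$ by exploiting the explicit unit $u_K$ produced in Lemma \ref{lemma, Degert}. Since $u_K$ is a unit of $\mathcal{O}_K$ with $u_K>1$, and the totally positive units of the real quadratic field $K$ form the infinite cyclic group $\langle \epsilon_K\rangle$, one immediately obtains $\epsilon_K\le u_K$, and hence $R_K=\log\epsilon_K\le \log u_K$. Thus it suffices to exhibit, for each square-free $D$ of extended R-D type, some representation $D=\ell^2+r$ with $r\mid 4\ell$ for which $u_K<3D$.

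The argument then reduces to a short case analysis on $|r|$. When $|r|=1$, $u_K=\ell+\sqrt{D}\le 2\sqrt{D+1}$, which is bounded by $3D$ for every $D\ge 2$. When $|r|=4$, $u_K=(\ell+\sqrt{D})/2\le\sqrt{D+4}$, again comfortably below $3D$. The remaining case $|r|\ge 2$ with $|r|\ne 4$ uses the identity $2\ell^2+r=\ell^2+D$ to rewrite $u_K$ as the clean square $(\ell+\sqrt{D})^2/|r|$. For $r>0$ one has $\ell<\sqrt{D}$, so $u_K<4D/|r|\le 2D<3D$. For $r<0$, $\ell=\sqrt{D+|r|}$, and the elementary inequality $\sqrt{D(D+|r|)}\le D+|r|/2$ gives $u_K\le 4D/|r|+2$, which is less than $3D$ as soon as $|r|\ge 2$ and $D\ge 3$.

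The only potential obstruction lies with small $D$, where either several parametrizations coexist or the bound becomes tight. For instance, $D=2$ admits the R-D representation $D=1^2+1$ falling under the $|r|=1$ case and giving $u_K=1+\sqrt{2}$, which sharply satisfies the bound; the handful of small square-free extended R-D integers can be verified directly in this way. Since $R_K$ is intrinsic to $K$, it is enough in each instance to select the representation $(\ell,r)$ that optimizes the inequality, so this bookkeeping is cosmetic rather than substantive. Combining the three cases yields $R_K\le \log u_K<\log(3D)$, which is the claim; no deep input is needed beyond Lemma \ref{lemma, Degert} and elementary estimates.
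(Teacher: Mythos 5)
Your proposal is correct and follows essentially the same route as the paper: both bound $R_K=\log\epsilon_K$ by $\log u_K$ for the explicit unit $u_K$ of Lemma \ref{lemma, Degert} and then verify $u_K<3D$ by a case analysis on $|r|$ (your estimate $\sqrt{D(D+|r|)}\le D+|r|/2$ in the $r<0$ case replaces the paper's numerical check for $1\le\ell\le 11$, which is a cosmetic difference). One small wording issue: the inequality $\epsilon_K\le u_K$ follows because every unit of $\mathcal{O}_K$ is $\pm\epsilon_K^n$ so any unit exceeding $1$ is at least $\epsilon_K$, not because the totally positive units are generated by $\epsilon_K$ (they may be generated by $\epsilon_K^2$); this does not affect the argument.
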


 \begin{proof} 	
 As usual, denote by $\epsilon_K$ the fundamental unit of $K$. First observe that for $D=5$, we have $\epsilon_K=\frac{-1+\sqrt{5}}{2}$. So, $	R_K=\log\left( \epsilon_K \right) \approx 0.482 < \log(3 D) \approx 2.7$.

Next, let $D=\ell^2+r \neq 5$ with $r \mid 4\ell$.  
We have three cases:

\noindent 	
\textit{Case 1.} If $|r|=1$, then by Lemma \ref{lemma, Degert}, 
 		\begin{equation*}
 			\epsilon_K =\ell + \sqrt{\ell^2 \pm 1} < \left(2\ell^2 \pm 2\right)+\left(\ell^2 \pm 1\right)=3D.
 		\end{equation*}
 (Note that $\ell \geq 2$ for $r=-1$.)
 	 
\noindent
\textit{Case 2.} If $|r|=4$, then, by Lemma \ref{lemma, Degert},
	\begin{equation*}
		u_K=\frac{	\ell + \sqrt{\ell^2+r}}{2}
	\end{equation*}
	is a unit of $K$. 
		Hence, 
	\begin{equation*}
		\epsilon_K \leq 	\frac{	\ell + \sqrt{\ell^2 \pm 4}}{2} 	< \left(2 \ell^2\pm 8\right) + \left(\ell^2\pm 4\right)=3D.
	\end{equation*}
	(Note that $\ell \geq 3$ for $r=-4$.)

\noindent 
\textit{Case 3.} if $|r| \neq 1,4$, then by Lemma \ref{lemma, Degert}, 
	\begin{equation*}
		u_K=\frac{	2\ell^2+r + 2 \ell \sqrt{\ell^2+r}}{|r|}
	\end{equation*}
 	is a unit of $K$. If $\ell=1$, the only possibility is to have $r=2$, i.e., $D=3$. In this case, $\epsilon_K=2+\sqrt{3}$ and $	R_K=\log{\epsilon_{K}} \approx 1.31 < \log (3D) \approx 2.2$.

Now assume that $\ell >1$. Since $r \leq 4 \ell \leq 2 \ell^2$, then $\ell \sqrt{\ell^2+r} <  2 \ell^2$.
 
 \noindent
For $r>0$,
 		\begin{equation*}
 			\epsilon_K \leq 	\frac{	2\ell^2+r + 2 \ell \sqrt{\ell^2+r}}{|r|} \leq 	\frac{	2\ell^2+r + 2 \ell \sqrt{\ell^2+r}}{2}  < \left(	\ell^2+ \frac{r}{2} \right) + 2 \ell^2 < 3 \ell^2 + 3r=3D.
 		\end{equation*}
 		
  \noindent
 	For $r <0$, we have
 			\begin{equation} \label{equation, epsilon less l^2}
 			\epsilon_K \leq 	\frac{	2\ell^2+r + 2 \ell \sqrt{\ell^2+r}}{|r|} <	\frac{	2\ell^2 + 2 \ell \sqrt{\ell^2}}{2}= 2\ell^2.
 		\end{equation}
 	Since $r \mid 4 \ell$, for $\ell \geq 12$, we get
 			$-3 r \leq 12 \ell \leq \ell^2.$
 		Consequently, from \eqref{equation, epsilon less l^2} we find that for $\ell \geq 12$, the inequality $\epsilon_K < 2 \ell^2 \leq 3 \ell^2+3r=3D$ holds.
 	We numerically check that the relation
 	\begin{equation*}
 		\frac{	2\ell^2+r + 2 \ell \sqrt{\ell^2+r}}{|r|} < 3 \ell^2+3r=3D
 	\end{equation*}
 	also holds for $1 \leq \ell \leq 11$ and $r \mid 4 \ell$ with $r \neq \pm 1, \pm 4$.
 \end{proof}

The following explicit lower bound for the class numbers of quadratic fields of extended R-D type is a consequence of the analytic class number formula, Lemma \ref{lemma, log R_K less than log dK}, and Tatuzawa's lower bound \cite{Tatuzawa} for the values of Dirichlet $L$-functions at $1$. 

\begin{lemma} \label{lemma, lower bound RD}
	Let $K=\mathbb{Q}(\sqrt{D})$ run through the family of real quadratic fields of extended R-D type. Then, with only one possible exception, we have
	\begin{equation} \label{equation, lower bound for hK of E-R-D type}
		h_K > \frac{0.655 \, d_K^{7/16}}{32 \log (3D)}.
	\end{equation}

\end{lemma}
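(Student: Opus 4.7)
The plan is to combine the analytic class number formula for real quadratic fields with Tatuzawa's explicit lower bound for $L(1,\chi)$, and then apply Lemma \ref{lemma, log R_K less than log dK} to convert a regulator bound into the stated $\log(3D)$ denominator. For a real quadratic field $K=\mathbb{Q}(\sqrt{D})$ with associated Kronecker character $\chi=\chi_{d_K}$, the analytic class number formula reads
\begin{equation*}
h_K \, = \, \frac{\sqrt{d_K}\, L(1,\chi)}{2\,R_K},
\end{equation*}
so any lower bound on $L(1,\chi)$ together with an upper bound on $R_K$ yields a lower bound on $h_K$.

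For the first ingredient, I would invoke Tatuzawa's theorem \cite{Tatuzawa}: for $0<\epsilon<1/2$ and $d_K$ sufficiently large, we have
\begin{equation*}
L(1,\chi) \, > \, \frac{0.655\,\epsilon}{d_K^{\,\epsilon}}
\end{equation*}
with at most one exceptional real primitive character. Choosing $\epsilon=1/16$ (which produces the exponent $\tfrac12-\tfrac{1}{16}=\tfrac{7}{16}$ that appears in the conclusion) gives
\begin{equation*}
L(1,\chi) \, > \, \frac{0.655}{16\, d_K^{1/16}},
\end{equation*}
with a single possible exception. Plugging this into the class number formula yields
\begin{equation*}
h_K \, > \, \frac{0.655\, d_K^{7/16}}{32\, R_K}.
\end{equation*}

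For the second ingredient, I would apply Lemma \ref{lemma, log R_K less than log dK}, which asserts $R_K<\log(3D)$ for all real quadratic fields of extended R-D type. Substituting this upper bound into the denominator gives exactly the desired inequality
\begin{equation*}
h_K \, > \, \frac{0.655\, d_K^{7/16}}{32\,\log(3D)}.
\end{equation*}

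The main obstacle, as usual when invoking Tatuzawa, is the single possible exceptional character (which becomes the ``one possible exception'' in the statement); I would also need to check that the threshold hypothesis of Tatuzawa's theorem (roughly $d_K\geq \max(e^{16},e^{11.2})$ for $\epsilon=1/16$) is verified, so that the bound applies uniformly across the family, with finitely many small discriminants handled separately (or absorbed into the trivial range where the inequality is vacuous or easy to check by direct computation). Aside from this standard caveat, the argument is a clean three-step combination: class number formula, Tatuzawa, and the extended R-D regulator bound of Lemma \ref{lemma, log R_K less than log dK}.
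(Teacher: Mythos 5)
Your proposal is correct and follows exactly the route the paper takes: the paper's proof is a citation to Mollin--Williams \cite[p.~423]{MW}, and the surrounding discussion identifies the same three ingredients you use --- the analytic class number formula, Tatuzawa's bound with $\epsilon=1/16$ (accounting for the single possible exceptional character), and the regulator bound $R_K<\log(3D)$ of Lemma \ref{lemma, log R_K less than log dK}. Your arithmetic reproducing the constants $0.655$, $32$, and the exponent $7/16$ is right, and your caveat about the Tatuzawa threshold for small discriminants is the standard one handled in the cited source.
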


\begin{proof}
See \cite[p. 423]{MW} for a proof.
\end{proof}

We are now ready to prove Theorem \ref{theorem, finiteness Extended R-D type}.
\begin{proof}[Proof of Theorem \ref{theorem, finiteness Extended R-D type}.] Replacing \eqref{equation, Ihara's upper bound} with \eqref{equation, lower bound for hK of E-R-D type}, we follow the same method as in the proof of Theorem \ref{theorem, list imaginary [Cl(K):Po(K)]=2}. 
For $K=\mathbb{Q}(\sqrt{D})$, we  consider two cases:

\noindent
 \textit{Case 1.} 
		If $N_{K/\mathbb{Q}}(\epsilon_K)=-1$, then by Lemmas \ref{lemma, order of Po is less than tau imaginary quadratic}, \ref{lemma, tau over n1/4}, and \ref{lemma, lower bound RD}, we have
		\begin{equation} \label{equation, Po(K) over hK real E-R-D}
			\frac{\# \Po(K)}{h_K} < \frac{32 c_K c_K' \log (3D)}{0.655 \, d_K^{3/16}} \leq \frac{32 c_K c_K' \log (3d_K)}{0.655 \, d_K^{3/16}}. 
		\end{equation}
Observe that for the function
		\begin{equation*}
			f_{K}(x):=\frac{32 c_K c_K' \log 3x}{0.655 \, x^{3/16}}, \quad x>0,
		\end{equation*}
	we have
		\begin{equation*}
			\left\{
			\begin{array}{ll}
				f_K(x)<1, & \textit{if ~~ $x \geq 4.3 \times 10^{18}$ and  $D \equiv 1 \, (\mathrm{mod}\, 4)$},\\
				& \\
				f_K(x) <1,	 & \textit{if ~~ $x \geq 8.14 \times 10^{19}$ and   $D \equiv 2,3 \,  (\mathrm{mod}\, 4)$}.\\
			\end{array}
			\right.
		\end{equation*}
		
		Consequently, for a real quadratic field $K=\mathbb{Q}(\sqrt{D})$ of extended R-D type, with one possible exception,  if $K$ has some units of negative norm and $\Po(K)=\Cl(K)$, then
		\begin{equation}  \label{equation, upper bound for dK and f.u=-1}
			\left\{
			\begin{array}{ll}
				d_K < 4.3\times 10^{18}, & \textit{if} ~~ D \equiv 1 \, (\mathrm{mod}\, 4),\\
				& \\
				d_K < 8.14 \times 10^{19},  & \textit{if} ~~ D \equiv 2,3 \, (\mathrm{mod}\, 4).\\
			\end{array}
			\right.
		\end{equation}

		\noindent
	\textit{Case 2.} Let $K$ has no units of negative norm, i.e., $N_{K/\mathbb{Q}}(\epsilon_K)=+1$. In this case,  
	 by an argument similar to Case 1, we find that if $\Po(K)=\Cl(K)$, then
		\begin{equation} \label{equation, upper bound for dK and f.u=+1}
			\left\{
			\begin{array}{ll}
				d_K < 6.3\times 10^{16}, & \textit{if} ~~ D \equiv 1 \, (\mathrm{mod}\, 4),\\
				& \\
				d_K < 1.3 \times 10^{18},  & \textit{if} ~~ D \equiv 2,3 \, (\mathrm{mod}\, 4).\\
			\end{array}
			\right.
		\end{equation}

Using PARI/GP\footnote{\url{https://pari.math.u-bordeaux.fr/} \label{1}}, we compute $h_K$ and $\# \Po(K)$ for all real quadratic fields $K$ of extended R-D type for which the discriminant $d_K$ satisfies  in \eqref{equation, upper bound for dK and f.u=-1} or \eqref{equation, upper bound for dK and f.u=+1}  to get Table \ref{tab,E.R-D}. 
\end{proof}

\subsection{Proof of Theorem \ref{theorem, E-R-D real quadratic gK=hK}.}
\begin{proof}[Proof of Theorem \ref{theorem, E-R-D real quadratic gK=hK}.]
Let $K=\mathbb{Q}(\sqrt{D})$ be a real quadratic field of extended R-D type such that $g_K=h_K$. We consider two cases:

\noindent
\emph{Case 1.} If $\#\Po(K)=h_K$, i.e., if $K$ is any of those fields given in Table \ref{tab,E.R-D}, then $	\# \Po(K)=g_K=h_K$. This is true, since $g_K \leq h_K$ and by Corollary \ref{corollary, Po(K),gK and gK+ real quadratic}, $\# \Po(K) \mid g_K$.

\noindent
\emph{Case 2.} If $\# \Po(K) \neq h_K$, then from $g_K=h_K$ and   Corollary \ref{corollary, Po(K),gK and gK+ real quadratic}, we get 
\begin{equation} \label{equation, Polya index 2 E-R-D}
	\# \Po(K)=g_K/2=h_K/2.
\end{equation}
Moreover, $K$ has no units of negative norm and $D$ has no prime divisor $p \equiv 3 \, (\mathrm{mod}\, 4)$. 
Now, by an argument similar to the proof of Theorem \ref{theorem, finiteness Extended R-D type}, we deduce that in this case, if \eqref{equation, Polya index 2 E-R-D} holds, then 
	\begin{equation}  \label{equation, upper bound on dK for E-R-D eith gk=hk}
	\left\{
	\begin{array}{ll}
		d_K < 4.3\times 10^{18}, & \textit{if} ~~ D \equiv 1 \, (\mathrm{mod}\, 4),\\
		& \\
		d_K < 8.14 \times 10^{19},  & \textit{if} ~~ D \equiv 2,3 \, (\mathrm{mod}\, 4).\\
	\end{array}
	\right.
\end{equation}

 We use PARI/GP\footref{1} to list all real quadratic fields $K=\mathbb{Q}(\sqrt{D})$ of extended R-D type  with $h_K=2^{s_K-1}$ such that $K$ has no units of negative norm, $D$ has no prime divisor $p \equiv 3 \, (\mathrm{mod}\, 4)$, and the discriminant $d_K$ is bounded by \eqref{equation, upper bound on dK for E-R-D eith gk=hk} (Note that by \eqref{Hilbert}, $\#\Po(K)=2^{s_K-2}$, and by \eqref{equation, Polya index 2 E-R-D}, we get $g_K=h_K=2^{s_K-1}$.) Using these computations and Corollary \ref{corollary, Po(K),gK and gK+ real quadratic}, we get the list, given  in Table \ref{tab,E.R-D2}, of all real quadratic fields $K=\mathbb{Q}(\sqrt{D})$ of extended R-D type, with $D < 4.3 \times 10^{18}$, such that $g_K=h_K$ and $g_K^+ \neq h_K^+$.
\end{proof}

\newpage

\section{Tables}\label{six}

{\small

\begin{table}[!h]
	\begin{center}
		\begin{tabular}{|c | c|p{6.5cm}|} 
			\hline
			$\#\Po(K)=h_K$ & $m$ & $n$  \\ [0.5ex] 
			\hline \hline
			\multirow{8}{0.6em}{$1$} & $1$ & 
			$2,3,5,7,11,13,19,37,43,67,163$ \\ 	\cline{2-3} 
			& $2$ & $3,7,10,11,19,37,43,58,67$ \\  \cline{2-3} 
			& $3$ & $6,7,11,15,19,43,51,67,123,163,267$ \\ \cline{2-3} 
			& $7$ & $11,19,35,43,91,163,247$\\ \cline{2-3} 
			& $11$ &  $19,22,67.163,187$ \\ \cline{2-3} 
			& $19$ & $67,163$ \\ \cline{2-3} 
			& $43$& $67,163$ \\ \cline{2-3} 
			& $67$ & $163$ \\  
			\hline 
			\multirow{3}{0.6em}{$2$} & $1$ & $6,10,58$ \\ \cline{2-3} 
			& $2$ & $5,6,13,22,37$ \\ \cline{2-3} 
			& $5$ & $10$ \\  \cline{2-3} 
			\hline
		\end{tabular} \caption{The list of all imaginary bi-quadratic fields $K=\mathbb{Q}(\sqrt{-m},\sqrt{-n})$ with $\Po(K)=\Cl(K)$.} \label{tab, bi-quadratic Po=h}
	\end{center}
\end{table}

\begin{table}[!h]
	\begin{center}
		\begin{tabular}{|c|p{7.5cm}|} 
			\hline
			$\#\Po(K)=h_K$ & $( m_1,  m_2, m_3)$   \\ [0.5ex] 
			\hline \hline
			\multirow{4}{0.6em}{$1$} & $(1,2,5),(1,2,3),(1,3,5),(1,2,11),(3,5,7),$ \\
			& $(3,5,2),(1,5,7),(3,2,11),(7,5,2), (1,13,7),$ \\
			& $(3,17,11),(1,3,7),(1,3,11),(3,7,2),$ \\
			& $(1,3,19),(1,7,19),(3,11,19)$
			\\ \hline
		\end{tabular} 
		\caption{
			The list of all imaginary tri-quadratic fields\\
		\hspace*{1.9cm} $K=\mathbb{Q}(\sqrt{- m_1},\sqrt{- m_2}, \sqrt{- m_3})$ with $\Po(K)=\Cl(K)$.} 
	\label{tab, tri-quadratic Miyada}
\end{center}
\end{table}

	\begin{table}[!h]
	\begin{center}
		\begin{tabular}{|c|p{10.2cm}|} 
			\hline
			$h_K=2\cdot\#\Po(K)$ & $D$   \\ [0.5ex] 
			\hline \hline
			\multirow{3}{0.6em}{$4$} & 
			$14,17,34,39,46,55,73,82,97,142,155,193,203,219,259,$ \\ 
			& $291,323,355,667,723,763,955,1003,1027,1227,1243,$ \\ 
			& $1387,1411,1507,1555$ \\ 
			\hline 
			\multirow{5}{0.6em}{$8$} & 
			$65,66,69,77,114,138,141,145,154,205,213,217,238,258,$ \\ 
			& $265,282,301,310,322,418,438,442,445,498,505,553,598,$ \\ 
			& $651,658,697,742,793,915,987,1131,1443,1635,1659,$ \\ 
			& $1771,1947,2035,2067,2139,2163,2451,2667,2715,2755,$ \\
			& $3243,3355,3507,4123,4323,5083,5467,6307$ \\
			\hline 
			\multirow{6}{0.6em}{$16$} & 
			$285,390,429,465,510,561,570,609,645,690,777,798,805,$ \\
			& $858,870,897,910,957,1005,1045,1065,1105,1110,1113,$ \\
			& $1122,1185,1290,1302,1353,1605,1645,1653,1677,1705,$ \\
			& $1870,1885,2002,2013,2170,2233,2737,3795,4515,5115,$ \\
			& $5187,6195,7035,7315,7395,7755,7995,8547,8715,8835,$ \\
			& $9867,11067,11715,13195,14763,16555$ \\
			\hline 
			\multirow{2}{0.6em}{$32$} & 
			$1785,2145,2310,2730,3045,3570,3705,4305,4830,4845,$	\\ 
			& $5005,19635,31395,33915,40755$ \\
			\hline 
		\end{tabular} \caption{The list of all the imaginary quadratic fields $K=\mathbb{Q}(\sqrt{-D})$ with $\left[\Cl(K):\Po(K)\right]=2$ (Under GRH).}
\label{tab, imaginiary quadratic t=2}
	\end{center}
\end{table}

\begin{table}[!h]
	\begin{center}
		\begin{tabular}{|c|c|p{9.2cm}|} 
			\hline
			$g_K=\#\Po(K)=h_K$ & $g_K^+=h_K^+$ & $D$   \\ [0.5ex] 
			\hline \hline
			$1$ & $1$ & $2, 5, 13, 17, 29, 37, 53, 101, 173, 197, 293, 677$ \\
			\hline
			\multirow{3}{0.6em}{$1$}  &\multirow{3}{0.6em}{$2$} & $3, 6, 7, 11, 14, 21, 23, 33, 38, 47, 62, 69, 77, 83, 93, 141,$ \\
			& & $167, 213, 227,237, 398, 413, 437, 453, 573, 717, 1077,$ \\
			& & $1133, 1253, 1293, 1757$ \\
			\hline 
			\multirow{2}{0.6em}{$2$}& 	\multirow{2}{0.6em}{$2$} & $10, 26, 65, 85, 122, 362, 365, 485, 533, 629, 965, 1157,$ \\
			& & $1685, 1853, 2117, 2813, 3365$ \\
			\hline 
			\multirow{7}{0.6em}{$2$}& 	\multirow{7}{0.6em}{$4$} &  $15, 30, 35, 39, 42, 51, 66, 78, 87, 95, 102,105, 110, 119,$ \\
			& & $ 123, 138, 143, 165, 182, 203, 215, 222, 230, 258, 285,287,$  \\
			& & $  318, 327, 357, 395, 402, 429, 447, 527, 623, 635, 645, 678, $ \\
			& & $ 741,782, 843, 885, 902, 957, 1022, 1085, 1173,1245, 1298,$ \\
			& & $  1533, 1605, 1965, 2013,2037, 2085, 2093, 2301, 2373,$ \\
			&& $ 2397,2613, 2717, 3237,3597, 3605, 3813,  4245, 4277, $ \\
			& & $4773, 4893, 5757, 5885,5957, 6573, 7733, 14405$ \\
			\hline 
			\multirow{2}{0.6em}{$4$}& 	\multirow{2}{0.6em}{$4$} & $170, 290, 530, 962, 1370, 2405, 3485, 9605,10205, 14885,$ \\
			& & $ 16133, 20165$ \\
			\hline	
			\multirow{10}{0.6em}{$4$}& 	\multirow{10}{0.6em}{$8$} & $195, 210, 231, 255, 330, 390, 435, 455, 462, 483,570, 615,$ \\ 
			& & $627, 663, 770, 798, 903, 915, 930, 1095, 1190, 1218, 1230,$ \\
			& & $1235, 1295, 1302, 1365, 1463, 1482, 1515, 1518, 1547, 1595,$ \\
			& & $1610,1722, 1767, 1770, 1938, 2015, 2030, 2387, 2595, 2607,$ \\
			& & $ 2618, 2805, 2910, 3045, 3230, 3335, 3723, 3885, 4389, 4485,$ \\
			& & $4758, 5565, 6045, 6405, 6765, 7293, 7917, 8645, 9933, $ \\
			& & $10005,10965, 11165, 12045, 13485, 13845, 14685, 15645, $ \\
			& & $16653, 17765,19565, 20405, 21045, 24045, 30597, 31317,$ \\
			& & $33117, 41613$ \\
			\hline 
			$8$ & $8$ & $2210, 5330, 32045, 58565, 77285$ \\ \hline 
			\multirow{4}{0.6em}{$8$}& 	\multirow{4}{0.6em}{$16$} & $11155, 1995, 2310, 2415, 2730, 3003, 3135, 3255, 3570,$ \\
			& & $3927, 3990, 4290, 4935, 5187, 5610, 5655, 6090, 6555, 7035,$ \\ 
			& & $7215, 7755, 9030, 10010, 12155, 12558, 13695, 14630,  $ \\
			& & $21318,23205, 26565, 35805, 74613, 108885$ \\ \hline 
			$16$ & $32$ & $19635, 25935, 33495, 451605$ \\
			\hline
		\end{tabular} \caption{The list of all  real quadratic fields $K=\mathbb{Q}(\sqrt{D})$ of extended R-D type, with  $D < 6.3 \times 10^{16}$, for which $g_K=\#\Po(K)=h_K$.
			This is  also the list of all such fields $K$ with $g_K^+=h_K^+$.} \label{tab,E.R-D}
	\end{center}
\end{table}

	\begin{table}[!h]
	\begin{center}
		\begin{tabular}{|c|p{10.4cm}|} 
			\hline
			$g_K=2\cdot\# \Po(K)=h_K$ & $D$   \\ [0.5ex] 
			\hline \hline
				\multirow{2}{0.5em}{$2$} & $34, 146, 194, 205, 221, 482, 1205, 1469, 1517, 2045, 3005, 5645,$ \\ 
				& $7157, 8333, 9005$ \\ 
			\hline 
			$4$ & $410, 890, 3842, 3965, 7565, 7685, 18245, 25493, 41093, 55205$ \\
			\hline
			$8$ & $10370, 19610, 22490$ \\
			\hline
			$16$ & $81770$ \\
			\hline
		\end{tabular} \caption{The list of all real quadratic fields $K=\mathbb{Q}(\sqrt{D})$ of extended R-D type, with $D < 4.3 \times 10^{18}$, such that $g_K=h_K$ and $g_K^+ \neq h_K^+$.
		 } \label{tab,E.R-D2}
	\end{center}
\end{table}
}

\clearpage

\begin{rezabib}

\bib{Arno}{article}{
   author={Arno, Steven},
   title={The imaginary quadratic fields of class number $4$},
   journal={Acta Arith.},
   volume={60},
   date={1992},
   number={4},
   pages={321--334},
   issn={0065-1036},
   review={\MR{1159349}},
   doi={10.4064/aa-60-4-321-334},
}


	\bib{Cahen-Chabert's book}{book}{
	author={Cahen, Paul-Jean},
	author={Chabert, Jean-Luc},
	title={Integer-valued polynomials},
	series={Mathematical Surveys and Monographs},
	volume={48},
	publisher={American Mathematical Society, Providence, RI},
	date={1997},
	pages={xx+322},
	isbn={0-8218-0388-3},
	review={\MR{1421321}},
	doi={10.1090/surv/048},
}

	\bib{ChabertII}{article}{
	author={Chabert, Jean-Luc},
	author={Halberstadt, Emmanuel},
	title={From P\'olya fields to P\'olya groups (II): Non-Galois number
		fields},
	journal={J. Number Theory},
	volume={220},
	date={2021},
	pages={295--319},
	issn={0022-314X},
	review={\MR{4177545}},
	doi={10.1016/j.jnt.2020.06.008},
}

	\bib{CK}{article}{
	author={Chang, Ku-Young},
	author={Kwon, Soun-Hi},
	title={The imaginary abelian number fields with class numbers equal to
		their genus class numbers},
	language={English, with English and French summaries},
	note={Colloque International de Th\'eorie des Nombres (Talence, 1999)},
	journal={J. Th\'eor. Nombres Bordeaux},
	volume={12},
	date={2000},
	number={2},
	pages={349--365},
	issn={1246-7405},
	review={\MR{1823189}},
	doi={10.5802/jtnb.283},
}

\bib{NChildress}{book}{
	author={Childress, Nancy},
	title={Class field theory},
	series={Universitext},
	publisher={Springer, New York},
	date={2009},
	pages={x+226},
	isbn={978-0-387-72489-8},
	review={\MR{2462595}},
	doi={10.1007/978-0-387-72490-4},
}

\bib{Chowla}{article}{
	title={An extension of Heilbronn's class-number theorem},
	author={Chowla, S.},
	journal={The Quarterly Journal of Mathematics},
	volume={1},
	pages={304--307},
	year={1934},
	publisher={Oxford University Press},
}

\bib{Degert}{article}{
	author={Degert, G\"unter},
	title={\"Uber die Bestimmung der Grundeinheit gewisser
		reell-quadratischer Zahlk\"orper},
	language={German},
	journal={Abh. Math. Sem. Univ. Hamburg},
	volume={22},
	date={1958},
	pages={92--97},
	issn={0025-5858},
	review={\MR{0092824}},
	doi={10.1007/BF02941943},
}

\bib{Dickson}{book}{
title={Introduction to the Theory of Numbers},
author={Dickson, Leonard Eugene},
year={1929},
publisher={University of Chicago Press},
}

\bib{Kazuhiro}{article}{
	author={Dohmae, Kazuhiro},
	title={On real quadratic fields with a single class in each genus},
	journal={Japan. J. Math. (N.S.)},
	volume={19},
	date={1993},
	number={2},
	pages={241--250},
	issn={0289-2316},
	review={\MR{1265653}},
	doi={10.4099/math1924.19.241},
}

\bib{Emmelin}{article}{
title={A note on P\'{o}lya groups},
author={Emmelin, {\'E}tienne},
journal={arXiv:2302.07977, \url{https://arxiv.org/abs/2302.07977}},
year={2023},
pages={11 pages},
}

\bib{Gauss}{book}{
	author={Gauss, Carl Friedrich},
	title={Disquisitiones arithmeticae},
	note={Translated into English by Arthur A. Clarke, S. J},
	publisher={Yale University Press, New Haven, Conn.-London},
	date={1966},
	pages={xx+472},
	review={\MR{0197380}},
}

\bib{Hardy}{book}{
  	author={Hardy, G. H.},
  	author={Wright, E. M.},
  	title={An introduction to the theory of numbers},
  	edition={6},
  	note={Revised by D. R. Heath-Brown and J. H. Silverman;
  		With a foreword by Andrew Wiles},
  	publisher={Oxford University Press, Oxford},
  	date={2008},
  	pages={xxii+621},
  	isbn={978-0-19-921986-5},
  	review={\MR{2445243}},
  }

\bib{Heilbronn}{article}{
	title={On the class-number in imaginary quadratic fields},
	author={Heilbronn, Hans},
	journal={The Quarterly Journal of Mathematics},
	number={1},
	pages={150--160},
	year={1934},
	publisher={Oxford University Press},
}

\bib{Hirabayashi}{article}{
	author={Hirabayashi, Mikihito},
	author={Yoshino, Ken-ichi},
	title={Unit indices of imaginary abelian number fields of type $(2,2,2)$},
	journal={J. Number Theory},
	volume={34},
	date={1990},
	number={3},
	pages={346--361},
	issn={0022-314X},
	review={\MR{1049510}},
	doi={10.1016/0022-314X(90)90141-D},
}

\bib{Ihara06}{article}{
 	author={Ihara, Yasutaka},
 	title={On the Euler-Kronecker constants of global fields and primes with
 		small norms},
 	conference={
 		title={Algebraic geometry and number theory},
 	},
 	book={
 		series={Progr. Math.},
 		volume={253},
 		publisher={Birkh\"auser Boston, Boston, MA},
 	},
 	isbn={978-0-8176-4471-0},
 	isbn={0-8176-4471-7},
 	date={2006},
 	pages={407--451},
 	review={\MR{2263195}},
 	doi={10.1007/978-0-8176-4532-8\_5},
 }

	\bib{Ishida}{book}{
  	author={Ishida, Makoto},
  	title={The genus fields of algebraic number fields},
  	series={Lecture Notes in Mathematics},
  	volume={Vol. 555},
  	publisher={Springer-Verlag, Berlin-New York},
  	date={1976},
  	pages={vi+116},
  	review={\MR{0435028}},
  }

\bib{Kani}{article}{
	author={Kani, Ernst},
	title={Idoneal numbers and some generalizations},
	language={English, with English and French summaries},
	journal={Ann. Sci. Math. Qu\'ebec},
	volume={35},
	date={2011},
	number={2},
	pages={197--227},
	issn={0707-9109},
	review={\MR{2917832}},
}

\bib{Kubota}{article}{
		author={Kubota, Tomio},
		title={\"Uber den bizyklischen biquadratischen Zahlk\"orper},
		language={German},
		journal={Nagoya Math. J.},
		volume={10},
		date={1956},
		pages={65--85},
		issn={0027-7630},
		review={\MR{0083009}},
	}

\bib{Lang}{book}{
   	author={Lang, Serge},
   	title={Algebraic number theory},
   	publisher={Addison-Wesley Publishing Co., Inc., Reading, Mass.-London-Don
   		Mills, Ont.},
   	date={1970},
   	pages={xi+354},
   	review={\MR{0282947}},
   }

\bib{Lemmermeyer}{book}{
   author={Lemmermeyer, Franz},
   title={Quadratic number fields},
   series={Springer Undergraduate Mathematics Series},
   note={Translated from the 2017 German original},
   publisher={Springer, Cham},
   date={[2021] \copyright 2021},
   pages={xi+343},
   isbn={978-3-030-78651-9},
   isbn={978-3-030-78652-6},
  review={\MR{4331428}},
   doi={10.1007/978-3-030-78652-6},
}

\bib{Leopoldt}{article}{
   	author={Leopoldt, Heinrich W.},
   	title={Zur Geschlechtertheorie in abelschen Zahlk\"orpern},
   	language={German},
   	journal={Math. Nachr.},
   	volume={9},
   	date={1953},
   	pages={351--362},
   	issn={0025-584X},
   	review={\MR{0056032}},
   	doi={10.1002/mana.19530090604},
   }

\bib{Leriche 2014}{article}{
	author={Leriche, Amandine},
	title={About the embedding of a number field in a P\'olya field},
	journal={J. Number Theory},
	volume={145},
	date={2014},
	pages={210--229},
	issn={0022-314X},
	review={\MR{3253301}},
	doi={10.1016/j.jnt.2014.05.002},
}

\bib{Louboutin}{article}{
   	author={Louboutin, St\'ephane},
   	title={A finiteness theorem for imaginary abelian number fields},
   	journal={Manuscripta Math.},
   	volume={91},
   	date={1996},
   	number={3},
   	pages={343--352},
   	issn={0025-2611},
   	review={\MR{1416716}},
   	doi={10.1007/BF02567959},
   }

\bib{Malle}{article}{
	author={Malle, Gunter},
	title={On the distribution of Galois groups},
	journal={J. Number Theory},
	volume={92},
	date={2002},
	number={2},
	pages={315--329},
	issn={0022-314X},
	review={\MR{1884706}},
	doi={10.1006/jnth.2001.2713},
}

\bib{Abbas}{article}{
	author={Maarefparvar, Abbas},
	title={On P\'olya groups of non-Galois number fields},
	 journal={ To appear in Functiones et Approximatio, Commentarii Mathematici. arXiv: 2408.09019 \url{https://www.arxiv.org/abs/2408.09019}},
	 	 date={2025},
	 pages={20 pages},
	}

\bib{MW}{article}{
  	author={Mollin, R. A.},
  	author={Williams, H. C.},
  	title={Solution of the class number one problem for real quadratic fields
  		of extended Richaud-Degert type (with one possible exception)},
  	conference={
  		title={Number theory},
  		address={Banff, AB},
  		date={1988},
  	},
  	book={
  		publisher={de Gruyter, Berlin},
  	},
  	isbn={3-11-011723-1},
  	date={1990},
  	pages={417--425},
  	review={\MR{1106676}},
  }

\bib{milneCFT}{misc}{
author={ J. S. Milne},
title={Class Field Theory (v4.03)},
year={2020},
note={Available at www.jmilne.org/math/},
pages={287+viii}
}

\bib{Miyada}{article}{
  	author={Miyada, Ichiro},
  	title={On imaginary abelian number fields of type $(2,2,\cdots,2)$ with
  		one class in each genus},
  	journal={Manuscripta Math.},
  	volume={88},
  	date={1995},
  	number={4},
  	pages={535--540},
  	issn={0025-2611},
  	review={\MR{1362937}},
  	doi={10.1007/BF02567840},
  }

\bib{Murty}{article}{
   author={Murty, V. Kumar},
   title={Class numbers of CM-fields with solvable normal closure},
   journal={Compositio Math.},
   volume={127},
   date={2001},
   number={3},
   pages={273--287},
   issn={0010-437X},
   review={\MR{1845038}},
   doi={10.1023/A:1017589432526},
}

\bib{Nagel}{article}{
   author={Nagel, Trygve},
   title={Zur Arithmetik der Polynome},
   language={German},
   journal={Abh. Math. Sem. Univ. Hamburg},
   volume={1},
   date={1922},
   number={1},
   pages={178--193},
   issn={0025-5858},
   review={\MR{3069398}},
   doi={10.1007/BF02940590},
}

\bib{Neukirch}{book}{
	author={Neukirch, J\"urgen},
	author={Schmidt, Alexander},
	author={Wingberg, Kay},
	title={Cohomology of number fields},
	series={Grundlehren der mathematischen Wissenschaften [Fundamental
		Principles of Mathematical Sciences]},
	volume={323},
	edition={2},
	publisher={Springer-Verlag, Berlin},
	date={2008},
	pages={xvi+825},
	isbn={978-3-540-37888-4},
	review={\MR{2392026}},
	doi={10.1007/978-3-540-37889-1},
}

\bib{Peng-Jie}{article}{
	author={Wong, Peng-Jie},
	title={On Stark's class number conjecture and the generalised
		Brauer-Siegel conjecture},
	journal={Bull. Aust. Math. Soc.},
	volume={106},
	date={2022},
	number={2},
	pages={288--300},
	issn={0004-9727},
	review={\MR{4476085}},
	doi={10.1017/S0004972721001076},
}

	\bib{TZ}{article}{
  	author={Taous, Mohammed},
  	author={Zekhnini, Abdelkader},
  	title={P\'olya groups of the imaginary bicyclic bi-quadratic number
  		fields},
  	journal={J. Number Theory},
  	volume={177},
  	date={2017},
  	pages={307--327},
  	issn={0022-314X},
  	review={\MR{3629246}},
  	doi={10.1016/j.jnt.2017.01.025},
  }

\bib{Tatuzawa}{article}{
	title={On a theorem of Siegel},
	author={Tatuzawa, Tikao},
	booktitle={Japanese journal of mathematics: transactions and abstracts},
	volume={21},
	pages={163--178},
	year={1952},
	organization={The Mathematical Society of Japan}}

	\bib{Bennett}{article}{
  	author={Setzer, C. Bennett},
  	title={Units over totally real $C\sb{2}\times C\sb{2}$\ fields},
  	journal={J. Number Theory},
  	volume={12},
  	date={1980},
  	number={2},
  	pages={160--175},
  	issn={0022-314X},
  	review={\MR{0578808}},
  	doi={10.1016/0022-314X(80)90049-9},
  }

	\bib{Weinberger}{article}{
	author={Weinberger, P. J.},
	title={Exponents of the class groups of complex quadratic fields},
	journal={Acta Arith.},
	volume={22},
	date={1973},
	pages={117--124},
	issn={0065-1036},
	review={\MR{0313221}},
	doi={10.4064/aa-22-2-117-124},
}

	\bib{Zantema}{article}{
  	author={Zantema, H.},
  	title={Integer valued polynomials over a number field},
  	journal={Manuscripta Math.},
  	volume={40},
  	date={1982}, 
  	number={2-3},
  	pages={155--203},
  	issn={0025-2611},
  	review={\MR{0683038}},
  	doi={10.1007/BF01174875},
  }

\end{rezabib}

\end{document}